\DeclareMathAlphabet\mathbfcal{OMS}{cmsy}{b}{n}
\pgfplotsset{compat=newest}
\pgfplotsset{plot coordinates/math parser=false}
\newlength\figureheight
\newlength\figurewidth
\newcommand{\R}{\mathbb{R}}
\newcommand{\N}{\mathbb{N}}
\def\1{\raisebox{2pt}{\rm{$\chi$}}}
\newcommand{\bfr}{\mathbf{r}}
\newcommand{\bfn}{\mathbf{n}}
\newcommand{\bfx}{\mathbf{x}}
\theoremstyle{plain}
\newtheorem{proposition}{Proposition}
\newtheorem{theorem}{Theorem}
\newtheorem{corollary}{Corollary}
\newtheorem{lemma}{Lemma}
\newtheorem{remark}{Remark}
\newtheorem{assumption}{Assumption}
\theoremstyle{definition}
\theoremstyle{remark}
\numberwithin{equation}{section}
\newcommand{\obs}{\Omega} 
\newcommand{\free}{\mathcal{D}} 
\newcommand{\target}{\mathcal{T}} 
\newcommand{\game}{\mathcal{G}} 
\newcommand{\boundary}{\partial\game} 
\newcommand{\boundaryT}{\partial\target} 
\newcommand{\usable}{\partial\target^\ast} 
\newcommand{\levelset}{\mathcal{L}} 
\newcommand{\front}{\Gamma} 
\newcommand{\hu}{\mathbf{x}_P^+} 
\newcommand{\hl}{\mathbf{x}_E^-} 
\newcommand{\hus}{\mathbf{s}_P^+}
\newcommand{\hls}{\mathbf{s}_E^-}
\begin{document}

\title{Usable boundary for visibility-based surveillance-evasion games}

\author{ 
 Carlos Esteve-Yag\"ue
 \\
	Department of Applied Mathematics \\
	and Theoretical Physics,
	University of Cambridge\\
	Cambridge, UK \\
	\texttt{ce423@cam.ac.uk} \\
    \And
 Richard Tsai
 \\
	Department of Mathematics and Oden Institute \\
	for Computational Engineering and Sciences\\
	The University of Texas at Austin\\
	Austin, USA \\
	\texttt{ytsai@math.utexas.edu} \\
}

\hypersetup{
pdfauthor={Carlos Esteve-Yag\"ue, Richard Tsai},
pdfkeywords={differential games, pursuit-evasion games, discontinuous viscosity solutions, usable part of the boundary, semi-permeable barriers},
}

\date{\today}

\maketitle

\begin{abstract}
We consider a surveillance-evasion game in an environment with obstacles. In such an environment, a mobile pursuer seeks to maintain the visibility with a mobile evader, who tries to get occluded from the pursuer in the shortest time possible. In this two-player zero-sum game setting, we study  the discontinuities of the value of the game near the boundary of the target set (the non-visibility region). In particular, we describe the transition between the usable part of the boundary of the target (where the value vanishes) and the non-usable part (where the value is positive). We show that the value enjoys a different behaviour depending on the regularity of the obstacles involved in the game. Namely, we prove that the boundary profile is continuous for the case of smooth obstacles, and that it exhibits a jump discontinuity when the obstacle contains corners. Moreover, we prove that, in the latter case, there is a semi-permeable barrier emanating from the interface between the usable and the non-usable part of the boundary of the target set.
\end{abstract}

\keywords{differential games \and pursuit-evasion games \and discontinuous viscosity solutions \and usable part of the boundary \and semi-permeable barriers}

\AMS{91A23 \and 49L12 \and 49L25 \and 49N70}

\section{Introduction}

We study the surveillance problem in which 
a group of mobile \textit{pursuers} (or observers) seek to maintain the line-of-sight with a group of mobile targets in an environment with obstacles, which constraint the pursuers' visibility and the mobility of both pursuers and targets. 
Surveillance in complex dynamic situations arises in many applications, such as crime prevention, wildlife research, sport coverage, traffic monitoring and industrial processes.
A special class of surveillance problem is the so-called target tracking, in which the observers need to maintain the visibility over the targets without the knowledge of their future movements.
In order to plan an optimal surveillance strategy while lacking such an important piece of information, one often assumes that either the targets move randomly \cite{frew2003trajectory,zhou2008optimal}, or that they behave in an adversarial manner \cite{bhattacharya2016visibility, bhattacharya2008approximation, Takei_Tsai_Zhou_Landa_2014}, trying to evade the pursuers' visibility.
The former assumption would lead to an optimal control problem, whereas the latter one brings us into the framework of game theory.

In this work we consider the latter case, in which the targets are adversarial in nature, so from now on, we will refer to them as \textit{the evaders}.
More precisely, we consider the surveillance-evasion game in which, given the initial position of the players (pursuers and evaders), the evaders try to minimise the time to occlusion, whilst the pursuers try to maintain the line-of-sight with the evaders for as long as possible.
In this setup, according to Isaacs' nomenclature \cite{isaacs1965differential},
we have a \emph{game of degree}. This stands in contrast with the related \emph{game of kind}, where, based on the initial positions of the players, the sole concern is to determine if the evaders can successfully hide from the pursuers within a finite time.

In the game of degree, the object of interest is the so-called \textit{value of the game}, which for every player, represents the optimal output of the game (time to occlusion), given the initial position, and assuming optimal play by the opponent. 
In a two-player zero-sum game, such as the one we are considering here, the value can be defined from the perspective of the evaders (lower value) or the pursuers (upper value), and the game is said to \textit{have value} when both values coincide.  See section \ref{sec: preliminaries} for further details about the definition and existence of the value.
The value of the game can also be used to characterise the victory domains of the associated game of kind.
Namely, the victory domain for the evaders is the set of initial positions for which the value is finite, whereas the victory domain for the  pursuers is the set where the value is infinity, as they can delay the occlusion time forever.

Assuming certain control models for the dynamics of the players and optimisation in a zero-sum game setup, one can use a dynamic programming approach to derive the partial differential equation for the value of the game,  known as the Hamilton-Jacobi-Isaacs HJI equation (see \cite{bhattacharya2008approximation, 
cardaliaguet1999set,cardaliaguet2000pursuit, evans1984differential, Takei_Tsai_Zhou_Landa_2014} and the references therein).
This is the analogue of the Hamilton-Jacobi-Bellman equation arising in optimal control theory \cite{bardi1997optimal}.
In the framework of differential game theory, 
this partial differential equation was first introduced  by Isaacs in \cite{isaacs1965differential} and gives a characterisation of the value at regions where it is sufficiently smooth.
More interestingly,  at the points where the value is differentiable,  its gradient can be used to obtain an optimal feedback control for the players.
However, the value fails to be smooth in general, and even worse, it is well-known that in differential game problems, the value may develop discontinuities.
Since the initial works by Isaacs in the 1960's,  there has been a great interest in studying the structure and properties of the discontinuity set of the value function.
Understanding the set of discontinuities is of major importance when implementing numerical methods to approximate the discontinuous viscosity solution to the HJI equation.
Indeed, most numerical schemes provide provable good approximations of the value only in compact sets away from the discontinuities \cite{bardi1995convergence, Bardi1999, falcone2006numerical}.

In the particular case of the surveillance-evasion game that we consider here,
the structure  of the discontinuity set is known to be rather complex even in the simplest situations.
These discontinuities can be different in nature. For instance, a special case of discontinuity is that occurring on the boundary separating the victory domain for the evaders (where the value is finite) and the victory domain for the pursuers (where the value is infinite).
Another special case of discontinuity is related to the \textit{non-usable part} of the boundary of the target set (or non-visibility set\footnote{By non-visibility set, we refer to the positions of the game in which all the evaders are occluded from the pursuers. Since the non-visibility set determines the end of the game, we call it the target set indistinctly.}). The non-usable part consists of the positions of the game which are on the boundary of the target set, but are unreachable assuming that the pursuers play optimally.
On the contrary, the \textit{usable part} of the boundary consists of those positions of the game, on the boundary of the target set, 
that can actually be used by the evaders to end the game by getting occluded from the pursuers.

In order to illustrate the notion of usable and non-usable part of the boundary of the target set in our surveillance-evasion setting, let us consider the two-dimensional case with only two players (one evader and one pursuer).
Let $\obs\subset\R^2$ be a given open set with Lipschitz boundary representing the obstacle (or obstacles if $\obs$ has multiple connected components).
The free-domain in which the players can move is defined as $\free := \R^2\setminus \obs$, and any position of the game is denoted by a couple $(E,P)\in \free^2$ representing the positions of the evader and the pursuer respectively.
The target set $\target\subset \free^2$ consists of the points $(E,P)\in \free^2$ such that the line segment joining $E$ and $P$, denoted by $[E,P]$, intersects the obstacle $\obs$, i.e.
$$
\target := \{ (E,P)\in \free^2 \ : \quad [E,P]\cap \obs \neq \emptyset \}.
$$ 
We denote by $\boundaryT$ the boundary of the target set.
Here, the boundary is taken with respect to the topology relative to the set $\free^2$, i.e., the topology generated by the open balls in $\R^4$ intersected with $\free^2$.
In this way, $\boundaryT$ contains the points $(E,P)$ such that the line segment $[E,P]$ is tangent to $\obs$, but does not contain the pairs $(E,P)$ such that $E$ or $P$ lies on $\partial\obs$. See Figure \ref{fig:partialD} for an illustration of the boundary of the target set.
Given the initial position of the game $(E,P)\in \free^2 \setminus \target$, the end-game time is defined as the first time such that the position reaches $\boundaryT$.

\begin{figure}
    \centering
    \includegraphics[scale = .5]{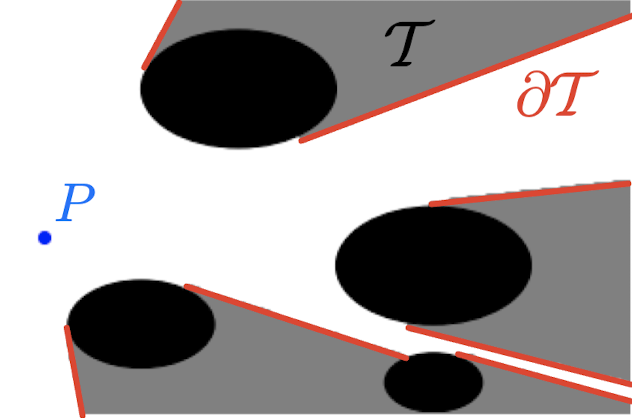}
    \hspace{1cm}
    \includegraphics[scale = .5]{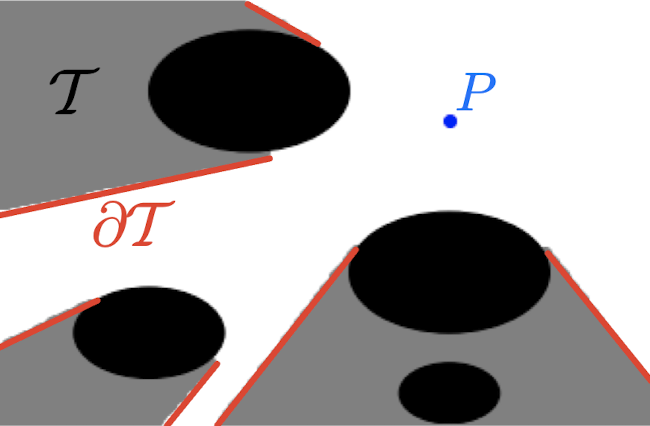}
    \caption{Representation (in red) of the boundary of the target set $\boundaryT$ from the perspective of the evader. The pictures represent two slices of the four-dimensional domain in which the blue point representing the pursuer's position is fixed. Note that the boundary is taken with respect to the topology restricted to the free domain $\free^2 = [\R^2 \setminus \obs]^2$, so the part of the boundary of the obstacles which is visible from the pursuer's position do not belong to $\boundaryT$.}
    \label{fig:partialD}
\end{figure}

Assuming that the dynamics for $E$ and $P$ are given by
$$
\dot{E}(t) = f(E(t), a(t))
\quad
\text{and}
\quad
\dot{P}(t) = g(P(t), b(t)),
\quad \text{for} \ t>0,
$$
where $a(\cdot): (0,\infty) \to \mathcal{A}$ and $b(\cdot):(0,\infty)\to \mathcal{B}$ are measurable functions representing the controls, with $\mathcal{A}$ and $\mathcal{B}$ being the compact control sets,
we define the Hamiltonian of the game as
$$
H(E,P, \rho_E, \rho_P):= \max_{a \in \mathcal{A}} - f(E, a)\cdot\rho_E
 + \min_{b\in \mathcal{B}} - g(P,b)\cdot \rho_P.
$$

One can prove (see Corollary \ref{cor: usable part general}) that  any point $(E,P)\in \boundaryT$ satisfying
$H (E,P, \bfn_E,\bfn_P) >0,$
where $(\bfn_E, \bfn_P)$ is the outer normal vector to $\target$ at $(E,P)$, is in the usable part of the boundary, denoted by $\usable$.
If on the contrary it holds that 
$H (E,P, \bfn_E,\bfn_P)<0,$ 
then $(E,P)$ is in the non-usable part of the boundary, denoted by $\boundaryT\setminus\usable$. See section \ref{sec: discontinuities} for further details.
What happens on the boundary between these two regimes (i.e. when $H(E,P,\bfn_E,\bfn_P)=0$) is more intricate, and is the main object of study of this work. As we will see,  the value of the game exhibits different behaviours near these points, depending on the regularity of the obstacles.

At the level of the dynamics, the simplest case that one can consider is that of homogeneous and isotropic dynamics, i.e. when the players can move in any direction at a maximum speed that does not depend on the position. 
In other words, when the evolution of the game is given by
\begin{equation}
\label{homogeneous isotropic intro}
\dot{E}(t) = \gamma_e a(t)
\quad
\text{and}
\quad
\dot{P}(t) = \gamma_p b(t),
\quad \text{for} \ t>0,
\end{equation}
where $\gamma_e,\gamma_p>0$ are two given constants representing the maximum speed for the evader and the pursuer respectively, and the control sets $\mathcal{A}$ and $\mathcal{B}$ are simply the unit ball in $\R^2$.
The Hamiltonian associated to these dynamics is independent of the position of the game, and reads as
$$
H(\rho_E,  \rho_P) = \gamma_e | \rho_E| - \gamma_p | \rho_P|.
$$

In this case, for any boundary point $(E,P)\in \boundaryT$ such that the segment $[E,P]$ is tangent to the obstacle at a single point $\bfx^\ast\in \partial\obs$, one can easily determine whether or not $(E,P)$ is in the usable part of $\boundaryT$ in terms of the ratio between the speed of the players and their distance to the tangent point $\bfx^\ast$.
Namely, it holds that
$$
(E,P) \ \text{is in the usable part whenever} \ \dfrac{\gamma_e}{d_E} > \dfrac{\gamma_p}{d_P}
$$
and
$$
(E,P) \ \text{is in the non-usable part whenever} \ \dfrac{\gamma_e}{d_E} < \dfrac{\gamma_p}{d_P},
$$
where $d_E := |E-\bfx^\ast|$ and $d_P := |P-\bfx^\ast|$. See Figure \ref{fig: usable part} for an illustration of the usable part of $\boundaryT$.

\begin{figure}[t]
\centering
\includegraphics[scale=.5]{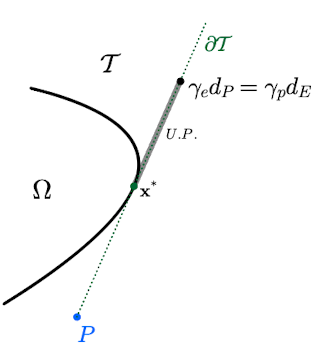}
\caption{Illustration of the usable part (U.P.)  of the boundary of the target set from the perspective of the evader in the case of isotropic, homogeneous dynamics.The figure represent a two-dimensional slice of the four-dimensional domain in which the position of the pursuer is fixed. The dotted green line above $\bfx^\ast$ represents the boundary of $\target$, and the shadow part represents the usable part of $\boundaryT$.}
\label{fig: usable part}
\end{figure}

In this work, we study the behaviour of the value of the game near the limits of the usable part of $\boundaryT$, i.e., positions of the game $(E,P)\in \free^2$ such that the line segment $[E,P]$ is tangent to the obstacle $\obs$ at a single point $\bfx^\ast$ and satisfy
\begin{equation}
\label{limit UP cond}
\gamma_e d_P= \gamma_p d_E.
\end{equation}
Namely, we provide estimates of the profile of the value of the game on the boundary of the target set, describing how it transitions from being zero on the usable part to positive on the non-usable part.
We also discuss the possibility of the existence of a semi-permeable barrier emanating from the limit of the usable part of $\boundaryT$.
This possibility was already discussed in \cite{bhattacharya2016visibility}, providing a negative answer for the case of a circular obstacle. Here,  we prove that the answer is not always negative, and actually depends on the regularity of the obstacle at $\bfx^\ast$.

In the next section we present the main results of the present paper concerning the behaviour of the value of the game near the boundary of the target set.
The rest of the paper is structured as follows:
in Section \ref{sec: preliminaries} we provide some necessary notation and a detailed description of the game in a general setting.
We also give the precise definition of the value and formulate the corresponding boundary value problem associated to the HJI equation.
In Section \ref{sec: discontinuities}, we use the level-set method to describe the value by means of the evolution of a propagating front.
This construction is then used to describe the discontinuities and to characterise the usable part of the boundary by means of the Hamiltonian.
In Section \ref{sec: boundary estimates smooth}, we carry out the analysis of the value near the boundary of the target set, for the case when the boundary of the obstacle is smooth.
In Section \ref{sec: boundary estimates corner}, we consider the case of boundary points for which the line-of-sight between the evader and the pursuers is tangent to the obstacle in a corner.
Finally, in Section \ref{sec: conclusions}, we discuss the conclusions of the present paper and possible future directions.
We also include, in Appendix \ref{sec: lemmas boundary}, two technical results which are used in the proofs presented in Section \ref{sec: boundary estimates smooth}.

\section{Main contributions}
\label{sec: main results}

Throughout this section,  we consider the surveillance-evasion game in a two-dimensional environment with obstacles and two players (one evader and one pursuer), moving according to the homogeneous and isotropic controlled dynamics defined in \eqref{homogeneous isotropic intro}.
Let us consider a point on the boundary of the target set
$(E,P)\in \boundaryT$ such that the segment $[E,P]$ is tangent to $\obs$ at a single point $\bfx^\ast\in \partial\obs$, and let us assume that \eqref{limit UP cond} holds.
Under this assumption, $(E,P)$ lies on the interface between the usable and the non-usable part of $\boundaryT$.
As we anticipated in the introduction, the value of the game near such boundary points, on the limits of the usable part of the boundary, exhibits a different behaviour depending on the regularity of $\partial\obs$ at $\bfx^\ast$.
Namely, we consider the two following cases: when $\partial\obs$ is smooth on a neighbourhood of $\bfx^\ast$ (see Figure \ref{fig:boundary points two cases} left); 
and when $\bfx^\ast$ is a corner of the obstacle (see Figure \ref{fig:boundary points two cases} right).
As we will see, the different behaviour does not only concern the boundary profile of the value but also the structure of the discontinuity set.

Let us start by describing the behaviour of the value in the first case, in which $\partial\obs$ is strictly convex and smooth in a neighbourhood of $\bfx^\ast$. More precisely, we assume that the curvature function $\kappa: \partial \obs\to \R^+$ satisfies
\begin{equation}
\label{assum boundary smooth intro}
\begin{cases}
\exists r>0, \ \exists \kappa_0>0 \ \text{and} \ \exists \lambda_r >0 
\quad \text{such that} \\
\kappa (\bfx) \geq \kappa_0 \quad \text{and}  \quad | \kappa (\bfx) - \kappa (\bfx')| \leq \lambda_r | \bfx - \bfx'| \quad \forall \bfx,\bfx' \in \partial\obs \cap B(\bfx^\ast, r).
\end{cases}
\end{equation}

In the next result we consider an initial position of the game $(E,P)\in \free^2\setminus \target$ such that the line of sight between the players passes close to $\bfx^\ast$.
This assumption can be mathematically expressed by means of the visibility horizons (see Assumption \ref{assump: initial pos} in Section \ref{sec: boundary estimates smooth} for further details).
We denote by $\hl, \hu\in \partial\obs$ the lower and upper visibility horizons from $E$ and $P$ respectively (the symmetric case is analogous), and let us assume that $\hl, \hu\in B(\bfx^\ast, r)$.
We set the quantity
$$
d^\ast (E,P) := \left(\dfrac{\gamma_p}{d_P} - \dfrac{\gamma_e}{d_E}\right)_+
\qquad
\text{where $d_E = |E-\hl|$ and $d_P = |P-\hu|$.}
$$
Here, $(\cdot)_+$ denotes the positive part function, i.e. $(x)_+ = \max \{ 0, x\}$ for any $x\in \R$.

\begin{figure}[t]
    \centering
    \includegraphics[scale = .4]{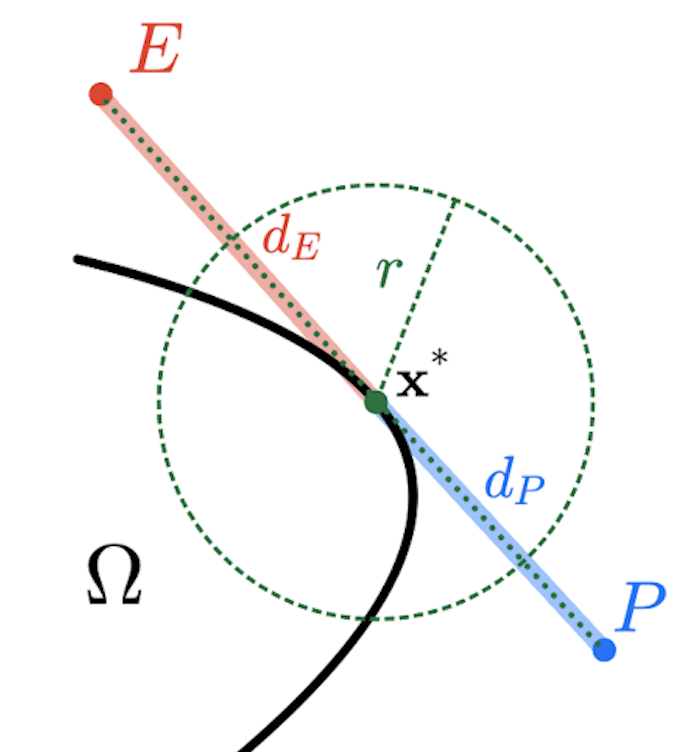}
\hspace{2cm}
    \includegraphics[scale = .5]{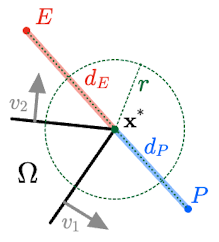}
    \caption{Illustration of two boundary points $(E,P)\in \boundaryT$. At the left,  the horizon point $\bfx^\ast\in \partial\obs$ is in a smooth part of the obstacle, and at the right, it is a corner.}
    \label{fig:boundary points two cases}
\end{figure}

The next result provides an upper and a lower estimate for the value when the initial position $(E,P)$ is close to the usable part of the boundary. 
Note that this is the case when $| \hl - \hu|$ and $d^\ast(E,P)$ are both small.
The quantity $| \hl - \hu|$ being small implies that $(E,P)$ is close to $\boundaryT$, whereas $d^\ast(E,P)$ being small implies that $(E,P)$ is close to the usable part of $\boundaryT$.

\begin{theorem}
\label{thm: smooth obst intro}
Consider the two-player surveillance-evasion game in a two-dimensional environment with one obstacle satisfying \eqref{assum boundary smooth intro} and isotropic and homogeneous dynamics \eqref{homogeneous isotropic intro}.
Assume that the initial position $(E,P)\in \free^2\setminus \target$ is such that the visibility horizons $\hl,\hu\in \partial\obs$ from $E$ and $P$ lie in $B(0,r/2)$.
Then there exist $0<c_1<C_2$ and a positive constant $\varepsilon>0$ depending on $\gamma_e,\gamma_p,d_E,d_P,  r$ and $\lambda_r$ such that, if $|\hl-\hu|<\varepsilon$ and $d^\ast (E,P)<\varepsilon$,  then the value of the game $V(E,P)$ satisfies
$$
c_1\kappa_0 d^\ast(E,P) \leq V(E,P) \leq C_2 (\kappa_0+r\lambda_r)^2 \sqrt{|\hl - \hu|} +  C_2 (\kappa_0+r\lambda_r) d^\ast (E,P).
$$
\end{theorem}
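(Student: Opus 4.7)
The natural approach is to reduce the analysis, near the tangent configuration, to a one-dimensional game played by the horizon points $\hl$ and $\hu$ on $\partial\obs$. Pick local coordinates in which $\bfx^\ast = 0$ and the tangent to $\partial\obs$ at $\bfx^\ast$ is horizontal; by assumption \eqref{assum boundary smooth intro}, $\obs$ is locally the graph $y = h(x)$ with $h(0)=h'(0)=0$, $h''(0)=\kappa_0$, and $h''$ Lipschitz with constant controlled by $\lambda_r$, so that $\kappa \in [\kappa_0, \kappa_0+r\lambda_r]$ on $B(\bfx^\ast,r)$. Differentiating the tangency identity $(E-\hl)\cdot\bfn(\hl)=0$ and its analogue for $\hu$ yields the horizon kinematics
$$
\dot s_{\hl} \;=\; -\frac{v_E\cdot\bfn(\hl)}{\kappa(\hl)\, d_E}, \qquad \dot s_{\hu} \;=\; -\frac{v_P\cdot\bfn(\hu)}{\kappa(\hu)\, d_P},
$$
where $s_{\hl},s_{\hu}$ are arc-length coordinates on $\partial\obs$. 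These exhibit the effective horizon speeds $\gamma_e/(\kappa(\hl)d_E)$ and $\gamma_p/(\kappa(\hu)d_P)$ whose comparison is precisely the content of $d^\ast(E,P)$; the game near $\bfx^\ast$ is essentially a one-dimensional pursuit on $\partial\obs$ in which the evader tries to bring $\hl$ onto $\hu$.

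For the \textbf{upper bound}, I would exhibit an explicit near-optimal evader strategy that splits its speed budget: a component $\gamma_e\sin\theta$ along the tangent toward $\bfx^\ast$, reducing $d_E$ so as to raise $\gamma_e/d_E$ above $\gamma_p/d_P$, and a component $\gamma_e\cos\theta$ along $\bfn(\hl)$ that sweeps $\hl$ toward $\hu$. Using the parabolic approximation $h(x) \approx \kappa_0 x^2/2$ together with the cubic Lipschitz correction, the clearance between the line of sight and $\obs$ is of order $\kappa_0|\hl-\hu|^2$ up to an $O(\lambda_r|\hl-\hu|^3)$ error. Against any admissible pursuer response, the time required to collapse this clearance is of order $(\kappa_0+r\lambda_r)^2\sqrt{|\hl-\hu|}$, where the square root reflects the quadratic clearance, while the tangential component takes time of order $(\kappa_0+r\lambda_r)d^\ast$ to overturn the non-usable inequality of Step~1. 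Optimizing $\theta$ and collecting the Lipschitz errors in $\kappa$ produces the stated upper bound.

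For the \textbf{lower bound}, I would design an admissible pursuer strategy, for instance $v_P$ directed along $\bfn(\hu)$ so as to drive $\hu$ away from $\hl$, and use the kinematics above to bound from above the rate at which any evader play can drive the non-usable margin $\gamma_p/d_P - \gamma_e/d_E$ to zero. The factor $1/\kappa$ in the horizon speeds means that this margin decays at rate $O(1/\kappa_0)$, so the minimum time for the game to end against this pursuer is at least $c_1\kappa_0 d^\ast(E,P)$. The \emph{hard part} will be the square-root scaling in the upper bound: the first-order horizon dynamics balance exactly at the interface, so the evader's advantage is purely second-order, coming from the curvature $\kappa_0$ moderated by $\lambda_r$. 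Converting this quadratic clearance into a sharp time estimate uniform over admissible pursuer responses is the technical core, and it presumably relies on the two auxiliary lemmas announced in Appendix \ref{sec: lemmas boundary}.
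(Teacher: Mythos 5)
Your proposal assembles the right raw ingredients (the horizon kinematics from \eqref{visib dynamics E}--\eqref{visib dynamics P}, the role of curvature, the fact that at the interface the first-order horizon speeds balance and the evader's edge is second order), but it leaves unproven exactly the step on which the paper's proof hinges, and the heuristic you offer in its place does not produce the stated scaling. The paper does \emph{not} argue ``strategy versus arbitrary opponent response'' directly. Its central reduction is Proposition \ref{prop: value represen formula}: because each player's visibility horizon is driven only by that player's own motion, and the game ends precisely when the evader's lower horizon meets the pursuer's upper horizon, the value (for small times) equals the first zero of $S(t)=\max_v \hus(t,v)-\max_v \hls(t,v)$, the difference of two \emph{decoupled} optimal-control value functions. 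This removes all uniformity-over-opponent issues at a stroke. Lemma \ref{lem: max horizons} then identifies the unique maximizing trajectories as straight lines along $\pm\bfn$ at the respective horizon points, and Lemma \ref{lem: estimate dist} together with the second appendix lemma expands the two horizon positions to second order in $t$: the crucial point is that the quadratic terms have \emph{opposite signs}, because the evader's optimal motion decreases $d_E$ (accelerating its horizon) while the pursuer's optimal motion increases $d_P$ (decelerating its horizon). Hence $S(t)\le S(0)+d^\ast t - c\,t^2+O(t^3)$, and solving this quadratic is what yields the $\sqrt{S(0)}\sim\sqrt{|\hl-\hu|}$ term with the $(\kappa_0+r\lambda_r)^2$ prefactor (Theorem \ref{thm: value estimates}), as well as the lower bound $c_1\kappa_0 d^\ast$. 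Your ``quadratic clearance $\kappa_0|\hl-\hu|^2$, hence time $\sqrt{|\hl-\hu|}$'' heuristic does not follow: collapsing a clearance of size $|\hl-\hu|^2$ at speed $O(1)$ would suggest time $O(|\hl-\hu|^2)$, not $O(\sqrt{|\hl-\hu|})$; the square root comes from the integrated second-order speed advantage having to cover a horizon gap of size $O(|\hl-\hu|)$, i.e.\ $t^2\gtrsim S(0)$. You yourself flag this conversion as ``the technical core'' and defer it to the appendix lemmas, but those lemmas are estimates along the specific decoupled optimal trajectories, not a uniformity-over-pursuer-responses argument, so in your framework the core step is simply missing.

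Two further points you would still need. First, your evader strategy (splitting speed between a tangential component to shrink $d_E$ and a normal component to sweep $\hl$) misidentifies the optimizer: by Lemma \ref{lem: max horizons} the optimal direction is exactly $-\bfn(\hls)$, and this single direction both maximizes the horizon sweep and decreases $d_E$ to the required order; an optimization over your angle $\theta$ would have to reproduce this and is not carried out. Second, the direct second-order expansion only works when the Lipschitz constant of the curvature is small relative to the other data (the hypothesis of Theorem \ref{thm: value estimates}); the paper removes this restriction by a comparison argument with inner and outer tangent circles of curvatures $\kappa_0$ and $\kappa_0+LC_L$ (Theorem \ref{thm: estimates k const}), which is where the clean constants $\kappa_0$ and $\kappa_0+r\lambda_r$ in the statement actually come from. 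Your plan of ``collecting the Lipschitz errors in $\kappa$'' does not address this and, as written, would not yield the theorem for general $\lambda_r$.
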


The proof is given in section \ref{sec: boundary estimates smooth} and uses the equations derived in \cite{tsai2004visibility} for the dynamics of the visibility horizons of the players. See Theorem \ref{thm: estimates k const} for a more detailed version of this result, with the explicit expressions for $c_1, C_2$ and $\varepsilon$.
The proof and the statement of Theorem \ref{thm: estimates k const} uses a parametrisation of the boundary $\partial\obs$ (see Assumption \ref{assump: initial pos}) which we have omitted here for presentation purposes.

\begin{remark}[Boundary profile]
\label{rmk: boundary profile}
By letting the initial position converge to the boundary of the target set $(E,P)\to \boundaryT$, which implies $|\hl - \hu| \to 0$, one can use Theorem \ref{thm: smooth obst intro} to estimate the boundary profile of the value (see Corollary \ref{cor: boundary estimates smooth}), i.e. there exists $0<c_1<C_2$ and $d_0>0$ such that
$$
\kappa_0 c_1 d^\ast (E,P) \leq V(E,P)
\leq (\kappa_0+r\lambda_r) C_2 d^\ast (E,P) , \qquad \text{whenever} \  d^\ast (E,P) < d_0.
$$
Moreover, when the curvature $\kappa_0$ is large enough,  we prove in Corollary \ref{cor: boundary estimates smooth sharp} the following sharper estimate:
$$
\kappa_0 C^\ast d^\ast (E,P) - C_0 d^\ast (E,P)^2 \leq V(E,P)
\leq (\kappa_0+r\lambda_r) C^\ast d^\ast (E,P) + C_0 d^\ast (E,P)^2,
$$
where $C^\ast = \frac{2d_P^3 d_E^3}{d_E^3 \gamma_p^2 + d_P^3 \gamma_e^2}$.
This provides explicit estimates of the first-order term of the boundary profile of $V(E,P)$ on $\boundaryT$. Roughly speaking, when the obstacle is smooth at the horizon point $\bfx^\ast$,  and for $(E,P)\in \boundaryT$ close to the usable part of the boundary, the value increases linearly with $d^\ast(E,P)$, with a slope proportional to the curvature.
The estimate is sharp when when the curvature $\kappa(\cdot)$ function is constant  in a neighbourhood of $\bfx^\ast$, i.e., when $\boundaryT$ is locally circular at $\bfx^\ast$.
\end{remark}

The behaviour described above differs drastically from the case when the horizon point $\bfx^\ast\in \partial\obs$ is a corner of the obstacle.
Let us consider now an obstacle $\obs\subset\R^2$ satisfying the following assumption (see Figure \ref{fig:boundary points two cases} (right) for an illustration):
\begin{equation}
    \label{assum horizon corner intro}
    \begin{cases}
   \exists r>0 \ \text{and} \
-\frac{\pi}{2} \leq \theta_1 < \theta_2 <\frac{\pi}{2}
    \  \text{such that} \ v_i = (\cos \theta_i, \sin \theta_i) \ \text{with} \ i=1,2 \   \text{satisfy}  \\
 \obs \cap B(\bfx^\ast, r) = \bigcap_{i=1,2} \{ x\in B(\bfx^\ast, r) \ : \ (x-\bfx^\ast) \cdot v_i < 0  \}.
    \end{cases}
\end{equation}
The main difference resides on the fact that, if one considers a vantage point $P\in \free$  (resp.  $E\in \free$) with visibility horizon at the corner $\bfx^\ast$, then in a neighbourhood of $P$, the visibility horizon is always $\bfx^\ast$.
This makes the use of polar coordinates quite convenient to study this case, considering $\bfx^\ast$ to be the origin without loss of generality.

Given the initial position of the game $(E,P)\in \free^2\setminus \target$, let us denote by $(d_E, \theta_E)$ and $(d_P,\theta_P)$ the corresponding polar coordinates (recall that $d_E = | E - \bfx^\ast|$ and $d_P = |P-\bfx^\ast|$).
Note that $(E,P)$ whenever $|\theta_E - \theta| = \pi$.
We can now state our main result concerning the behaviour of the value of the game near the usable part of $\boundaryT$ when the line of sight between the player is near a corner of $\partial\obs$.

\begin{theorem}
\label{thm: corner obst intro}
Consider the two-player surveillance-evasion game in a two-dimensional environment with isotropic and homogeneous dynamics \eqref{homogeneous isotropic intro}, and an obstacle $\obs$ such that $\partial\obs$ has a corner at $\bfx^\ast = (0,0)$ satisfying \eqref{assum horizon corner intro}.
Assume that the initial position $(E,P)\in \free^2\setminus \target$ expressed in polar coordinates as $(d_E, \theta_E)$ and $(d_P,\theta_P)$ satisfies $\min\{d_E, d_P\} > \underline{d}$, for some $\underline{d}>0$ and 
\begin{equation}
\label{angular coord hyp intro}
\theta_1-\dfrac{\pi}{2} < \theta_E < \theta_2 - \dfrac{\pi}{2},
\qquad
\theta_2-\dfrac{\pi}{2} < \theta_P < \theta_2 + \dfrac{\pi}{2}
\quad \text{and} \quad
\theta_E- \theta_P < \pi.
\end{equation}
Then, there exist $t_0>0$ depending on $\theta_1,\theta_2, \gamma_e,\gamma_p$ and $\underline{d}$ such that
\begin{enumerate}
\item If $\gamma_e d_P \leq  \gamma_p d_E$, then $V(E,P) \geq t_0$.
\item If $\gamma_e d_P > \gamma_p d_E$, then 
$$
V(E,P) \leq    \dfrac{\pi - (\theta_E-\theta_P)}{\frac{\gamma_e}{d_E} - \frac{\gamma_p}{d_P}} \qquad
\text{whenever} \quad \theta_E-\theta_P \geq \pi - t_0 \left( \dfrac{\gamma_e}{d_E} - \dfrac{\gamma_p}{d_P} \right).
$$
\end{enumerate}
\end{theorem}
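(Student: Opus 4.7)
}
I would work in polar coordinates $(d,\theta)$ centred at the corner $\bfx^\ast = 0$ and reduce the occlusion condition in a neighbourhood of the grazing line-of-sight to the statement that the angular separation $\theta_E - \theta_P$ crosses the value $\pi$ (with the branch of angles fixed by \eqref{angular coord hyp intro}). Writing $\omega_e := \gamma_e/d_E$ and $\omega_p := \gamma_p/d_P$, the single analytic tool I would establish first is the reachability estimate: for a player with maximum speed $\gamma$ starting at distance $d$ from the origin, and for $\gamma t < d$,
\[
\bigl|\theta(t) - \theta(0)\bigr| \;\leq\; \arcsin\!\bigl(\gamma t /d\bigr),
\]
with equality attained by the straight-line trajectory tangent at its endpoint to the circle of radius $\sqrt{d^2 - \gamma^2 t^2}$. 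This follows by maximising the polar angle over the closed Euclidean disc $\overline{B}(\mathrm{start}, \gamma t)$.

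\smallskip

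\textbf{Upper bound (item (ii)).} I would set $T := (\pi - (\theta_E - \theta_P))/(\omega_e - \omega_p)$, which is at most $t_0$ by the standing hypothesis, and prescribe the evader the open-loop straight-line strategy realising $\theta_E(T) - \theta_E(0) = \arcsin(\omega_e T)$. For any (measurable) pursuer strategy the reachability bound gives $\theta_P(T) - \theta_P(0) \leq \arcsin(\omega_p T)$, and the heart of the argument is the inequality
\[
\arcsin(\omega_e T) - \arcsin(\omega_p T) \;=\; \int_{\omega_p T}^{\omega_e T} \frac{dx}{\sqrt{1-x^2}} \;\geq\; (\omega_e - \omega_p)\,T,
\]
which uses only $\arcsin'(x) \geq 1$ on $[0,1)$. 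Combining these yields $\theta_E(T) - \theta_P(T) \geq \pi$, and since $t \mapsto \theta_E(t) - \theta_P(t)$ is continuous with initial value $<\pi$, the intermediate value theorem produces some $\tau \in [0, T]$ at which the segment $[E(\tau), P(\tau)]$ passes through the corner, i.e. $(E(\tau), P(\tau)) \in \boundaryT$. Hence $V(E,P) \leq \tau \leq T$. The role of choosing $t_0$ small is only to enforce $\omega_e T, \omega_p T < 1$ and to keep the evader's straight line inside $\free$; both follow from $\min\{d_E, d_P\} > \underline d$.

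\smallskip

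\textbf{Lower bound (item (i)).} Under $\omega_e \leq \omega_p$ I would give the pursuer the \emph{proportional-matching} feedback strategy
\[
\dot\theta_P(t) \;=\; \dot\theta_E(t), \qquad \dot d_P(t) \;=\; \frac{d_P}{d_E}\,\dot d_E(t),
\]
which preserves both $d_P(t)/d_E(t) \equiv d_P/d_E$ and $\theta_E(t) - \theta_P(t) \equiv (\theta_E - \theta_P)(0)$. Its admissibility is exactly the hypothesis of the theorem: the pursuer's speed squared equals $(d_P/d_E)^2(\dot d_E^2 + d_E^2 \dot\theta_E^2) \leq (d_P \gamma_e/d_E)^2 \leq \gamma_p^2$. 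Since the angular separation is pinned strictly below $\pi$, the tangent-through-corner configuration is never reached. It remains to rule out, on a short interval $[0, t_0]$, occlusion by the segment becoming tangent to one of the two straight edges of the wedge. This I would handle by a continuity argument that controls the rotation of the segment: the evader's total angular change over $[0, t_0]$ is bounded in terms of $\gamma_e/\underline d$, and the strict inequalities in \eqref{angular coord hyp intro} together with the wedge opening $\theta_2 - \theta_1$ provide a uniform angular safety margin, so $t_0$ is chosen so that this rotation remains within the margin.

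\smallskip

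The upper bound is the cleaner half: conceptually it reduces to identifying the extremal angular-reach trajectory and the $\arcsin$-versus-linear comparison above, which is what produces the sharp rate $1/(\omega_e - \omega_p)$ appearing in the statement. I expect the real obstacle to lie in (i), both in formalising the proportional-matching as a non-anticipative (Elliot--Kalton) strategy and in pinning down the universal constant $t_0$ ruling out edge-tangencies (as opposed to corner-tangencies), which is where the dependence on the wedge opening $\theta_2 - \theta_1$ and on $\underline d$ enters.
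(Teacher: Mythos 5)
Your route is sound overall, and the two halves compare differently with the paper's proof. For the upper bound (ii) you are doing essentially what the paper does, just unpacked: the paper funnels everything through Proposition \ref{prop: represen form corner} and the explicit formula $S(t)=\theta_E-\theta_P+\arcsin(\gamma_e t/d_E)-\arcsin(\gamma_p t/d_P)$ of Lemma \ref{lem: S(t) corner explicit}, then uses convexity of $S$ to get $S(t)\geq S(0)+\left(\tfrac{\gamma_e}{d_E}-\tfrac{\gamma_p}{d_P}\right)t$; your open-loop evader dash, the reachable-disc bound $|\theta(t)-\theta(0)|\leq\arcsin(\gamma t/d)$ (which is exactly the right-triangle computation in Lemma \ref{lem: S(t) corner explicit}), the inequality $\arcsin(\omega_e T)-\arcsin(\omega_p T)\geq(\omega_e-\omega_p)T$ and the intermediate value theorem are that same argument written directly. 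For the lower bound (i) you genuinely diverge: the paper again argues through the representation formula ($\arcsin$ increasing gives $S(t)\leq S(0)<\pi$ on $[0,t_0]$, hence $V\geq t_0$ by Proposition \ref{prop: represen form corner}), whereas you build an explicit proportional-matching pursuer strategy freezing $\theta_E-\theta_P$ and $d_P/d_E$. That is the same device the paper uses to prove Theorem \ref{thm: semipermeable barrier}, extended off the barrier, and it makes the game-theoretic content visible where the paper hides it in the omitted proof of Proposition \ref{prop: represen form corner}; your admissibility computation (speed squared $=(d_P/d_E)^2(\dot d_E^2+d_E^2\dot\theta_E^2)\leq\gamma_p^2$ exactly when $\gamma_e d_P\leq\gamma_p d_E$) is correct.

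Two caveats on item (i), both at points you partially anticipated but did not pin down. First, the value \eqref{value function infinite h} is defined as $\inf_{\alpha}\sup_{b}$, i.e.\ strategies for the evader against open-loop pursuer controls; exhibiting a non-anticipative \emph{pursuer} strategy that survives against every evader control bounds a differently-informed value, not literally $V$. To close this you must either invoke the existence of value (which the paper cites from Cardaliaguet et al., Isaacs' condition holding), or, for each evader strategy $\alpha$, produce an open-loop $b$ by pairing $\alpha$ against a delayed version of your matching rule; the instantaneous rule $\dot\theta_P(t)=\dot\theta_E(t)$ cannot be paired with an arbitrary non-anticipative $\alpha$ without such a delay-and-Gronwall step. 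Second, the ``uniform angular safety margin'' you invoke to rule out edge tangencies does not follow from the strict inequalities \eqref{angular coord hyp intro} alone: $\theta_P$ may be arbitrarily close to $\theta_2+\pi/2$, so $\dist(P,\obs)$ can be arbitrarily small, and then both the state constraint $P(t)\in\free$ for the matching trajectory and the exclusion of occlusion across an edge (rather than through the corner) need a separate argument. The paper absorbs precisely this into the smallness condition \eqref{t_0 small cond corner} of Proposition \ref{prop: represen form corner} (players cannot reach $\obs$ and the visibility horizons stay pinned at $\bfx^\ast$), at the price that the $t_0$ it actually constructs depends on the distances to the obstacle and not only on the parameters listed in the statement; your write-up should either impose the analogous condition explicitly or explain how the stated dependence of $t_0$ is recovered.
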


Note that, when the angular coordinates of the initial position satisfy $\theta_E-\theta_P \sim \pi$, the position $(E,P)$ is close to the boundary of $\target$, hence, the second point in the above theorem gives an upper estimate for the value close to the usable part of $\boundaryT$.
Indeed, it proves that $V(E,P)$ converges to zero as $(E,P)$ tends to the usable part.
The proof is given in Section \ref{sec: boundary estimates corner}, and follows a similar argument to the one in the proof of Theorem \ref{thm: smooth obst intro} but in polar coordinates.

\begin{remark}[Boundary profile]
\label{rmk: jump discontinuity corner}
As in the smooth case, we can let $(E,P)$ converge to the boundary of the target set $\target$ to obtain the profile of the value on $\boundaryT$.
We observe that near the non-usable part of $\boundaryT$, i.e. when $\frac{\gamma_e}{d_E} < \frac{\gamma_p}{d_P}$, the value $V(E,P)$ is uniformly positive,  whilst in the usable part of the $\boundaryT$ it obviously vanishes.
This indicates that on the limit of the usable part of $\boundaryT$, i.e. when $\frac{\gamma_e}{d_E} = \frac{\gamma_p}{d_P}$, the profile of the value exhibits a jump discontinuity.
\end{remark}

The above result shows that, when the initial position of the game $(E,P)\in \free^2\setminus \target$ is close to $\target$, in such a way that the line of sight $[E,P]$ is close to the corner $\bfx^\ast$ (i.e. when $\theta_E-\theta_P\sim \pi$), the value function exhibits a complete different behaviour at either side of the surface $\{\gamma_e d_P = \gamma_p d_E\}$.
Looking at the proof of Theorem \ref{thm: corner obst intro}, this different behaviour can be interpreted as follows: when the initial position satisfies $\gamma_e d_P >\gamma_p d_E$, the evader can use the corner $\bfx^\ast$ to get occluded from the pursuer in a time proportional to $\theta_E-\theta_P$, whereas if $\gamma_e d_P <\gamma_p d_E$, the evader cannot use $\bfx^\ast$ to get occluded.
It does not mean however that the value is infinite in this region, as the evader may be able to use another part of the obstacle to get occluded.

A question that naturally arises is the possibility of the surface $\{\gamma_e d_P = \gamma_p d_E\}$ being a semi-permeable barrier for the game \cite{isaacs1965differential, cardaliaguet1997nonsmooth}.
Semi-permeable barriers are oriented surfaces which enjoy the \textit{semi-permeability property}.
Namely, each player can avoid the state of the system $(E(\cdot) , P(\cdot))$ to cross a semi-permeable barrier in one sense.
If the initial position $(E,P)$ belongs to a semi-permeable barrier $\mathcal{S}:= \{ (E,P) \ : \ g(E,P) = 0\}$, then the evader has a non-anticipative strategy that prevents the game from entering in the set  $\{ (E,P) \ : \ g(E,P) > 0\}$;
and likewise, the pursuer has a non-anticipative strategy that prevents the game from entering in the set $\{ (E,P) \ : \ g(E,P) < 0\}$.
For a  smooth surface, it is well-known that the semi-permeability property is equivalent to 
$$
H(E,P, \nabla_E g(E,P) , \nabla_P g(E,P) ) = 0, \qquad
\forall (E,P) \in \mathcal{S}.
$$

The next result shows that under the assumptions of Theorem \ref{thm: corner obst intro}, there is a semi-permeable barrier emanating from the limit of the usable part of the boundary.

\begin{theorem}
\label{thm: semipermeable barrier}
Consider the two-player surveillance-evasion game in a two-dimensional environment with isotropic and homogeneous dynamics \eqref{homogeneous isotropic intro}, and an obstacle $\obs$ such that $\partial\obs$ has a corner at $\bfx^\ast = (0,0)$ satisfying \eqref{assum horizon corner intro}.
Then the hyper-surface
$$
\mathcal{S} := \{ (E,P)\in \free^2\setminus \target \, : \quad  
\gamma_e d_P = \gamma_p d_E \ \text{and} \ 
(\theta_E,\theta_P) \ \text{satisfying \eqref{angular coord hyp intro}}
\}.
$$
is a semi-permeable barrier.
\end{theorem}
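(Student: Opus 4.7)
The plan is to check the smooth-surface characterization of semi-permeability stated just above the theorem, namely that $H(E,P,\nabla_E g,\nabla_P g)\equiv 0$ on $\mathcal{S}$ for a natural defining function $g$, and then to make the result self-contained by exhibiting explicit blocking feedbacks for the two players.

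Taking $\bfx^\ast=0$ without loss of generality, I would set $g(E,P):=\gamma_e d_P-\gamma_p d_E=\gamma_e|P|-\gamma_p|E|$. The angular inequalities \eqref{angular coord hyp intro} force $\theta_E$ and $\theta_P$ to be well-defined, hence $E\neq 0\neq P$, so $g$ is smooth on a neighborhood of $\mathcal{S}$ with
$$\nabla_E g = -\gamma_p\,\tfrac{E}{|E|},\qquad \nabla_P g = \gamma_e\,\tfrac{P}{|P|},$$
neither of which vanishes. Thus $\mathcal{S}$ is a smooth codimension-one submanifold of $\free^2\setminus\target$. Since $|\nabla_E g|=\gamma_p$, $|\nabla_P g|=\gamma_e$, and the Hamiltonian associated to \eqref{homogeneous isotropic intro} is $H(\rho_E,\rho_P)=\gamma_e|\rho_E|-\gamma_p|\rho_P|$, a one-line computation gives
$$H(\nabla_E g,\nabla_P g)=\gamma_e\gamma_p-\gamma_p\gamma_e=0.$$

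To make the conclusion concrete, I would identify the saddle-point feedbacks $a^\ast(E)=E/|E|$ and $b^\ast(P)=P/|P|$, i.e.\ both players move radially outward from the corner. Along any trajectory,
$$\dot g = \gamma_e\gamma_p\bigl(\hat P(t)\cdot b(t) - \hat E(t)\cdot a(t)\bigr),$$
so with the evader playing $a(t)=\hat E(t)$ one has $\dot g\le 0$ for every measurable pursuer control, forbidding the state to enter $\{g>0\}$; symmetrically the pursuer playing $b(t)=\hat P(t)$ yields $\dot g\ge 0$ and blocks $\{g<0\}$. Standard Elliott--Kalton-type arguments upgrade these Lipschitz feedbacks to non-anticipative strategies in the sense used in the paper.

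The main obstacle is the admissibility check: one must ensure the blocking trajectories remain in the domain on which the preceding analysis is valid, namely in $\free^2\setminus\target$ and satisfying \eqref{angular coord hyp intro}. Radial outward motion keeps $\theta_E,\theta_P$ constant, so the angular constraints are trivially preserved; the distances $d_E,d_P$ strictly increase, so the players stay away from the corner; and by the wedge geometry \eqref{assum horizon corner intro}, the radial rays from $\bfx^\ast$ at the admissible angles lie in $\free$, so $E(t),P(t)\in\free$ for $t\ge 0$. Continuity of the target indicator then gives $(E(t),P(t))\in\free^2\setminus\target$ for short positive time, completing the verification that $\mathcal{S}$ is semi-permeable.
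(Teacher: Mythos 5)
Your proposal is correct, and its core is the same as the paper's: one proves semi-permeability by exhibiting explicit strategies in polar coordinates centred at the corner, exploiting that $\dot d_E=\gamma_e\,\hat E\cdot a$ and $\dot d_P=\gamma_p\,\hat P\cdot b$, so that only the radial components of the controls affect $g=\gamma_e d_P-\gamma_p d_E$. The difference is in the choice of blocking strategies. The paper has each player \emph{mimic the radial component of the opponent's control} ($a_r=b_r$, resp.\ $b_r=a_r$), which keeps the state exactly on $\mathcal{S}$ and genuinely uses non-anticipative strategies. You instead let the blocking player move \emph{radially outward at full speed}, which gives the one-sided monotonicity $\dot g\le 0$ (resp.\ $\dot g\ge 0$) against an arbitrary opponent; since this control does not depend on the opponent's input at all, non-anticipativity is immediate and no Elliott--Kalton upgrade is really needed, which is a small simplification. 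Your additional verification that $H(\nabla_E g,\nabla_P g)=0$ on $\mathcal{S}$ recovers the smooth-surface criterion the paper only cites, and is a useful consistency check. One caveat: your remark that ``radial outward motion keeps $\theta_E,\theta_P$ constant, so the angular constraints are trivially preserved'' is only accurate for the blocking player's own angle; in the semi-permeability test the opponent plays arbitrarily and may change their angular coordinate, leave the range \eqref{angular coord hyp intro}, or end the game. This does not damage the argument, because the monotonicity of $g$ along the blocking strategy needs only that the blocking player stays in $\free$ and away from the corner (guaranteed locally by the wedge geometry, and $|\dot d_P|\le\gamma_p$, $|\dot d_E|\le\gamma_e$ hold regardless), and the conclusion is anyway local to the region where $\mathcal{S}$ is defined --- the paper's own proof is no more precise on this bookkeeping.
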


The proof of this theorem is done at the end of Section \ref{sec: boundary estimates corner}. It consists on the construction of the explicit non-anticipating strategies for $E$ and $P$ that prevent the game from trespassing the barrier $\mathcal{S}$.
The possibility of a semi-permeable barrier emanating from the boundary between the usable and the non-usable part of $\boundaryT$ was discussed in \cite{bhattacharya2016visibility}, where they prove that it is not the case when the obstacle $\obs$ is circular.
This result goes in the same direction of the continuous boundary profile that we prove in Theorem \ref{thm: smooth obst intro} for smooth obstacles.
When $\obs$ contains a corner, we show in Theorem \ref{thm: corner obst intro} that the boundary profile exhibits a jump discontinuity at the boundary between the usable and the non-usable part of $\boundaryT$, and moreover, we show in Theorem \ref{thm: semipermeable barrier} that, contrary to the smooth case studied in \cite{bhattacharya2016visibility}, there is a semi-permeable barrier emanating from the limit of the usable part.


\section{Preliminaries}
\label{sec: preliminaries}

In this section,  we formulate in detail the surveillance-evasion game from the perspective of differential games.
We consider the game with multiple pursuers and evaders in a two-dimensional environment and general dynamics.
The goal is to make precise the  definition of the value of the game and to formulate the boundary-value problem associated to the HJI equation.
We shall discuss two game paradigms: the finite-horizon game, in which there is an upper limit for the duration of the game, and the infinite-horizon game, in which no time limit for the game is imposed. 
As we will see in section \ref{sec: discontinuities}, the description of the value by means of a propagating front, which in this case is not strictly monotone,  helps understanding the discontinuities and the non-usable part of the boundary.

\subsection{Game set-up}
\label{subsec: game setup}

The game is considered in a domain of the form $\free := \R^2\setminus \obs,$ where $\obs$ is an open, bounded set with Lipschitz boundary in $\R^2$ that we will refer to as the obstacle.
The obstacle $\obs$ may have multiple connected components, which accounts for the case of multiple obstacles.
Although there can be multiple evaders and pursuers, the game can be formulated as a two-player zero-sum game. The first player is represented by the $m$ evaders, denoted by $E = \left(E^{(1)}, \ldots , E^{(m)}\right)$, which try to minimise the occlusion time;
and the second player is represented by the $n$ pursuers, denoted by $P=\left( P^{(1)}, \ldots , P^{(n)} \right)$, which try to maximise it.

Given two compact sets $\mathcal{A}$ and $\mathcal{B}$ of some finite dimensional space, representing the control sets for the evaders and the pursuers respectively, and two functions (see the hypotheses below)
$$
f: \free\times \mathcal{A} \longrightarrow \R^2, \quad
\text{and} \quad
g: \free\times \mathcal{B} \longrightarrow \R^2,
$$
the evolution of the game is given by the system of controlled ODEs
\begin{equation}
\label{game dynamics surv-evasion}
\begin{cases}
\dot{E}^{(i)}(t) = f(E^{(i)}(t), a^{(i)}(t)) & t>0, \quad i\in \{1,\ldots , m\} \\
\dot{P}^{(j)}(t) = g(P^{(j)}(t), b^{(j)}(t)) & t>0, \quad j\in \{1,\ldots , n\} \\
E(0) = E_0 := (E_0^{(1)}, \ldots , E_0^{(m)}) \\
P(0) = P_0 := (P_0^{(1)}, \ldots ,P_0^{(n)}),
\end{cases}
\end{equation}
where $(E_0,P_0)\in \free^m\times \free^n$ represents the initial position of the game,  and
the controls 
$$
a(\cdot) = (a^{(1)}, \ldots , a^{(m)})(\cdot): (0,\infty) \longrightarrow \mathcal{A}^m, \qquad
b(\cdot) = (b^{(1)}, \ldots ,b^{(n)})(\cdot): (0,\infty) \longrightarrow \mathcal{B}^n
$$
are measurable functions chosen by the players.

Since the players are not allowed to enter into the obstacle $\obs$, we define the set of admissible controls for $E$ as
$$ 
\mathcal{C}_e(E_0) := \left\{ a: (0,\infty) \to\mathcal{A}^m \quad \text{measurable s.t.} \  E(t) \in \free^m \ \forall t\in (0,+\infty)  \right\},
$$
and analogously, the set of admissible controls for $P$ is defined as
$$ 
\mathcal{C}_p(P_0) := \left\{ b: (0, \infty) \to \mathcal{B}^n \quad \text{measurable s.t.} \  P(t)\in \free^n \ \forall t\in (0,+\infty)  \right\}.
$$
In other words, the admissible controls are those which make each player stay in the free domain $\free$ forever.
The fact that the obstacle $\obs$ is an open set allows the players to move along the boundary of the obstacle.
This is a typical assumption in shortest path to a target problems with obstacles, which ensures the existence of optimal paths.

In order to ensure that the sets of admissible controls $\mathcal{C}_e (E_0)$ and $\mathcal{C}_p (P_0)$ are non-empty for any $E_0\in \free^m$ and $P_0\in \free^n$, and that the system of ODEs \eqref{game dynamics surv-evasion} is well-posed,  we make the following assumptions:
\begin{equation}
\label{assumption general dynamics}
\begin{cases}
f(E,a) \ \text{and} \ g(P,b) \ \text{are Lipschitz continuous w.r.t.
$E$ and $P$ respectively.} \\
\bigcup_{a\in \mathcal{A}} f(E,a) \ \text{and} \
\bigcup_{b\in \mathcal{B}} f(P,b) \ 
\text{are convex for any $E$ and $P$.} \\
\{ f(E, a)\ : \ a\in \mathcal{A}\} \cap \operatorname{int} (C_\obs (E)) \neq \emptyset \quad \forall E\in \partial\obs. \\
\{ g(P, b)\ : \ b\in \mathcal{B}\} \cap \operatorname{int} (C_\obs (P)) \neq \emptyset \quad \forall P\in \partial\obs. 
\end{cases}
\end{equation}
Here, $\operatorname{int} (C_\obs (E))$ and $\operatorname{int} (C_\obs (P))$ denote the interior of the Clarke's tangent cone to $\free = \R^2\setminus\obs$ at $E$ and $P$ respectively (see \cite[Section 4]{FRANKOWSKA2000449} for the definition).
We use the same framework as in the paper \cite{cardaliaguet2000pursuit} about pursuit differential games with constraints, with the extension to Lipschitz obstacles from \cite{FRANKOWSKA2000449}.
We stress that the surveillance-evasion problem that we consider here can be seen as a particular case of the more general pursuit-evasion games considered in \cite{cardaliaguet1999set,cardaliaguet2000pursuit}.

We recall that the game ends at the first time such that all the evaders are occluded from the pursuers' line-of-sight.
Then, we define the target set $\target\subset \free^{m+n}$ as
\begin{equation}
\label{target}
\target := \{ (E,P)\in \free^m\times \free^n \ : \quad [E_i,P_j]\cap \obs \neq \emptyset \quad \forall (i,j)\in \{1,\ldots ,m \}\times \{1, \ldots , n  \} \}.
\end{equation}
Since we are using the same framework as in \cite{cardaliaguet1999set, cardaliaguet2000pursuit}, 
we need the target to be closed. 
Consequently, given the initial position of the game $(E_0,P_0) \in \free^{m+n}\setminus \target$ and two admissible controls $(a(\cdot),b(\cdot))\in \mathcal{C}_e(E_0)\times \mathcal{C}_p (P_0)$ we define the end-game time as the first time such that the position of the game reaches the boundary of $\target$, i.e.
\begin{equation}
\label{end-game time}
t^\ast (E_0, P_0, a(\cdot), b(\cdot)):=
\min\{   
t\geq 0 \quad \text{such that} \quad (E(t), P(t))\in \boundaryT
\}.
\end{equation}
As mentioned in the introduction, $\boundaryT$ refers to the boundary of $\target$ in the topology relative to $\free^{m+n}$, i.e. the topology generated by the open balls in $(\R^2)^{m+n}$ restricted to $\free^{m+n}$.
In this way,  the game does not end when a player reaches the boundary of the obstacle, but only when they reach the non-visibility region (see Figure \ref{fig:partialD} for an illustration).
We assume the convention $t^\ast (E_0,P_0, a(\cdot), b(\cdot))  = +\infty$ whenever $E_0, P_0, a(\cdot)$ and $b(\cdot)$ are such that $(E(t), P(t))\not\in \boundaryT$ for all $t\geq 0$.
And naturally, if $(E_0,P_0)\in \boundaryT$, then $t^\ast(E_0,P_0,a(\cdot), b(\cdot))=0$ for any $a(\cdot)\in C_e(E_0)$ and $b(\cdot)\in C_p (P_0)$.

We now use the definition of the first hitting time $t^\ast(\cdot)$ to introduce the payoff of the game. 
We consider two game paradigms: the finite-horizon problem and the infinite horizon one.
Given a time-horizon $t>0$, the pay-off of the finite-horizon game is defined as
\begin{equation}
    \label{payoff finite horizon}
    J_t( E_0, P_0, a(\cdot), b(\cdot))
    :=
    \min \{ t, \, t^\ast (X_0, a(\cdot) , b(\cdot)) \}.
\end{equation}
For the infinite-horizon problem, the payoff is given by the first hitting time, i.e.
\begin{equation}
    \label{payoff infinite horizon}
    J (X_0, a(\cdot), b(\cdot))
    := t^\ast (E_0, P_0, a(\cdot), b(\cdot)).
\end{equation}

As we mentioned in the introduction, we consider the surveillance-evasion game in which the evaders seek to get occluded from the pursuers' line of sight in the shortest time possible,  whereas the pursuers' goal is to prevent occlusion.
Assuming that the group of evaders and the group of pursuers play cooperatively (each group of players playing as a single player), one can consider this as a zero-sum two-player game, in which one player controls the group of the evaders $E$, with the goal of minimizing the pay-off function  $J_t(\cdot)$ (resp. $J(\cdot)$), and the other player controls the group of pursuers $P$ with the goal of maximizing the same pay-off.

\subsection{The value of the game and the Hamilton-Jacobi-Isaacs equation}
\label{subsec: HJI eq}

The value of the game represents the best payoff that a player can ensure, assuming an optimal response by the opponent.
The value can therefore be defined in two ways: from the perspective of either the Evader or the Pursuer. These are typically referred to as the lower and the upper value of the game respectively.
When both values coincide, we say that the game has value.
For simplicity, we shall always consider the lower value, i.e. from the perspective of the Evader.
See \cite{cardaliaguet2000pursuit} for a proof of existence of value for this game, and a further discussion about the relation between the upper and the lower value of pursuit-evasion games with state constraints.

In order to define the lower value of the game, we need to give an appropriate definition of strategy for the player $E$. 
We use the notion of non-anticipating strategies following the definition of Elliot and Kalton \cite{ elliott1972existence2, elliott1972existence}. In particular, we use the exact same definition of strategy as in \cite{cardaliaguet1999set, cardaliaguet2000pursuit}, which deal with pursuit-evasion games in the case of state-constrained dynamics.
Given the initial condition $(E_0, P_0)\in \free^{m+n}\setminus \target$, the set of non-anticipating strategies for $E$ is defined as
$$
\begin{array}{ll}
\mathcal{S}_e (E_0, P_0) := \{\alpha: \mathcal{C}_p (P_0) \to \mathcal{C}_e (E_0) \ : & \text{if}\ \exists \tau >0 \ \text{s.t.} \ b_1(t) = b_2(t) \ \text{for a.e.}\ t\in (0,\tau), \\ 
& \text{then}\ 
\alpha [b_1] (t) = \alpha[b_2] (t) \ \text{for a.e.}\ t\in (0,\tau)\} .
\end{array}
$$
With this notion of strategy, for any $t>0$ and $(E,P)\in \game$, we define the lower value of the finite-horizon game  as
\begin{equation}
\label{value function finite h}
\tilde{V} (t, E,P) := \inf_{\alpha \in \mathcal{S}_e(E,P)}
\sup_{b\in \mathcal{C}_p(P)}
J_t (E,P, \alpha[b] (\cdot), b(\cdot)).
\end{equation}
Note that $\tilde{V} (t,E,P)$ is always finite. Indeed, it holds that
$$
0 < \tilde{V}(t,E,P) \leq t, \qquad \forall (t, E,P)\in (0,\infty)\times (\free^{m+n}\setminus \target).
$$
On the contrary, the value of the infinite-horizon game, defined as
\begin{equation}
\label{value function infinite h}
V (E,P) :=  \inf_{\alpha \in \mathcal{S}_e(E,P)}
\sup_{b\in \mathcal{C}_p(P)}
J_\infty (E,P, \alpha[b] (\cdot), b(\cdot)),
\end{equation}
can take infinite values. 

In both cases, we observe that, by the definition of the first-hitting time $t^\ast(\cdot)$ in \eqref{end-game time}, the value of the game vanishes on the boundary of the target set $\target$. That being said,  as we announced in the introduction,   the value of the game develops discontinuities near some parts of $\boundaryT$, the so-called non-usable part of the boundary. 
Namely, there exist parts of $\boundaryT$ which are not reachable by the game, assuming perfect play by the pursuers. Hence, even if the initial position of the game is arbitrarily close to those parts of the boundary, the value of the game remains uniformly positive.

\begin{remark}
\label{rmk:relation finit-inf horizon}
    One can readily prove that, for any $t>0$, the values of the finite-horizon and the infinite-horizon games satisfy the relation
    $$
    \tilde{V}(t, E,P) := \min \{ t, \, 
    V(E,P)\}.
    $$
    At the points $(E,P) \in \free^{m+n}\setminus \target$ where $V (E,P)$ is finite, the function $t\to \tilde{V}(t, E,P)$ grows at speed one until $t= V (E,P)$, and then stays constant. At the points $(E,P)\in \game$ where $V (E,P)$ is infinite, we have $\tilde{V}(t,E,P) = t$ for all $t>0$.
\end{remark}

Next we introduce the Hamilton-Jacobi-Isaacs equation associated to the differential game that we are considering.
This is the partial differential equation that characterises the value of the game.
For the finite-horizon game, one obtains a time-evolution first-order PDE coupled with initial and boundary conditions,  posed on the domain  $(0,\infty)\times \game$, where $\game$ is defined as
$$
\game := \free^{m+n}\setminus \target,
$$
and will be referred to as the game domain.
For the infinite-horizon problem,  one obtains a time-independent first-order PDE with boundary condition, posed in the game domain $\game$.

    As before, the boundary of $\game$, denoted by $\boundary$, refers to the boundary of $\game\subset \free^{m+n}$ in the topology relative to $\free^{m+n}$, i.e. the topology generated by the open balls in $(\R^2)^{m+n}$ intersected with $\free^{m+n}$. 
    Hence, points on the boundary of the obstacle $\obs$ do not in general belong to $\boundary$. See Figure \ref{fig:partialD} for an illustration.
   Note that it holds that $\boundary= \boundaryT$.

In order to ease the notation, we denote from now on the position of the game by $X:= (E,P)\in\game$.
The dynamics \eqref{game dynamics surv-evasion} associated to the evolution of the game will be written in the more compact form
$$
\begin{cases}
\dot{X} (t) = F(X(t), a(t), b(t)) & t>0, \\
X(0) = X_0,
\end{cases}
$$
where $X_0\in \game$ denotes the initial position of the game.

Using the dynamic programming principle, Isaacs proved in \cite{isaacs1965differential} that, at points $(t,X) \in (0,\infty)\times \game$ where $\tilde{V}(\cdot)$ is differentiable, it satisfies the first-order partial differential equation
\begin{equation}
\label{HJI evol}
\partial_t \tilde{V} (t,X) + H(X,\nabla_X \tilde{V}(t,X) ) = 1,
\end{equation}
where the Hamiltonian $H:\game\times (\R^2)^{m+n} \to \R$ is given by
\begin{equation}
\label{hamiltonian preliminaries}
H(X, \rho) := \sup_{a\in \mathcal{A}} \inf_{b\in \mathcal{B}} -F(X, a,b) \cdot \rho.
\end{equation}
Similarly, at points $X\in\game$ where $V(\cdot)$ is differentiable, it holds that
\begin{equation}
\label{HJI infinite}
 H(X,\nabla_X V (X) ) = 1.
\end{equation}
The partial differential equations \eqref{HJI evol} and \eqref{HJI infinite} are known as the Hamilton-Jacobi-Isaacs equations associated to the finite-horizon game and the infinite-horizon game respectively.

Using the expanded notation for the position of the game $X=(E,P)$ as
$E = (E^{(1)}, \ldots , E^{(m)})$ and $P=(P^{(1)}, \ldots,P^{(n)})$,
and for the dynamics in \eqref{game dynamics surv-evasion},  the Hamiltonian reads as follows:
\begin{equation}
\label{hamiltonian explicit}
H(E,P, \rho_E,  \rho_P) = \sum_{i=1}^m \sup_{a\in \mathcal{A}} \left\{ - f(E^{(i)} ,  a)\cdot \rho_E^{(i)} \right\}
+ \sum_{j=1}^n \inf_{b\in \mathcal{B}} \left\{ -g(P^{(j)},  b) \cdot \rho_P^{(j)} \right\},
\end{equation}
where $\rho_E = \left( \rho_E^{(1)}, \ldots, \rho_E^{(m)} \right) \in \R^{md}$ and 
$\rho_P = \left( \rho_P^{(1)}, \ldots, \rho_P^{(n)} \right) \in \R^{nd}$.
Note that the gradient of the value $V (X)$ with respect to the game position can be written as $\nabla_X V (X) = (\nabla_E V (E,P),  \nabla_P V(E,P))$, where
$$
\nabla_E V (E,P) = \big( \nabla_{E^{(1)}} V(E,P), \ldots ,  \nabla_{E^{(m)}} V(E,P)  \big) \in \R^{md}
$$
and
$$
\nabla_P V (E,P) = \big(  \nabla_{P^{(1)}} V(E,P), \ldots ,  \nabla_{P^{(n)}} V(E,P)  \big) \in \R^{nd}.
$$
The gradient $\nabla_X V(t,X)$ is is defined analogously.
From the explicit expression of the Hamiltonian in \eqref{hamiltonian explicit}, it is easy to check that $H(X,\rho)$ in \eqref{hamiltonian preliminaries} satisfies the Isaacs condition, i.e., the sup and the inf are interchangeable.

The partial differential equations \eqref{HJI evol} and \eqref{HJI infinite} give a characterisation of the value of the game when it is differentiable. However, the value for this kind of games typically fails to be $C^1$, and in our case, the situation is even worse as it develops discontinuities in most of the cases. The equations \eqref{HJI evol} and \eqref{HJI infinite}  have to be understood in the sense of viscosity solutions \cite{crandall1984some,crandall1992user,crandall1983viscosity}.
Although initially they were developed to deal with continuous solutions to Hamilton-Jacobi equations, the theory of viscosity solutions has been adapted to deal with discontinuous solutions as well (see for instance \cite{Bardi1997,ishii1987perron}).
It is proved in \cite[Theorem 7.5]{cardaliaguet1999set} that the function $\tilde{V}(t,X)$, as defined in \eqref{value function finite h}, is the smallest lower semicontinuous viscosity supersolution to the initial-boundary value problem
\begin{equation}
\label{IBPHJI}
    \begin{cases}
        \partial_t \tilde{V} + H(X, \nabla \tilde{V})  = 1  & \text{in} \ (0,\infty)\times \game,  \\
\tilde{V} (t, X) = 0 &  \text{on}\ (0,\infty)\times  \boundary,\\
\tilde{V}(0, X) = 0 &  \text{in} \  \boundary.
    \end{cases}
\end{equation}
Similarly, the function $V (X)$, as defined in \eqref{value function finite h}, is the smallest lower semicontinuous viscosity supersolution to the boundary value problem
\begin{equation}
\label{BP HJI}
    \begin{cases}
      H(X, \nabla V)  = 1  & \text{in} \ (0,\infty)\times \game, \\
V(X) = 0 &  \text{in} \  \boundary,
    \end{cases}
\end{equation}
where in both cases, the Hamiltonian $H(X,\rho)$ is given by \eqref{hamiltonian preliminaries}.

Despite being interesting from a theoretical viewpoint as a characterisation of the value of the game, the problems \eqref{IBPHJI} and \eqref{BP HJI} are of little use in practice if one is not able to compute (or numerically approximate) the correct viscosity solution.
Moreover, one needs to do so without a complexity that scales exponentially with $(m+n)d$.

For a class of first-order Hamilton-Jacobi equations (e.g. the Eikonal equation arising in shortest path problems), the so-called Fast Marching method \cite{sethian1996fast,sethian1999fast,tsitsiklis1995efficient,helmsen1996two,sethian2001ordered} can be used to efficiently compute solutions. These methods rely on the
strict monotonicity of the solution along the characteristics, and use a sorting strategy to update solutions on grid nodes in essentially a single pass.
Although our problem is very much similar to a shortest path problem, the fact that one player is adversarial prevents the propagation of the front from being strictly monotone, which is a big inconvenience when implementing Fast Marching methods.  We will see in the following section that this lack of monotonicity of the front propagation is the reason for the discontinuities of the solution.

The Fast Sweeping methods, \cite{zhao2005fast,tsai2003fast,kao2005fast}, on the other hand, rely on Gauss-Seidel type iterations. These algorithms utilise different orderings of the grid nodes and upwind discretisation to propagate families of the characteristics efficiently.  It can be proved that first order approximation to the viscosity solutions can be computed with a constant number of iterations.

There are several numerical methods that can be used to approximate the value of the game by solving a discrete approximation of the above boundary-value problem.
The semi-Lagrangian schemes \cite{bardi1995convergence, Bardi1999, falcone2006numerical} are iterative methods based on finite differences that make use of the dynamic programming principle to compute the value of a discrete version of the game defined on a grid. 
These schemes are proven to converge to the viscosity solution in sets where this one is continuous, even though it can be also implemented in the presence of discontinuities. 
Recently, a fast sweeping scheme was developed in \cite{ly2020visibility} for solving the HJIs that we discuss in this paper. However, all the aforementioned methods suffer from the curse of dimensionality because they rely on using grids to discretise the equation and propagate the causality in some way.

Finally, we point out that there has been rapid development of numerical methods for solving Hamilton-Jacobi equations in higher dimensions \cite{darbon2016algorithms,chow2018algorithm,chow2019algorithm} in the past few years. These methods do not rely on discretisation of the Hamiltonian on a mesh, but instead, characterisations of the solutions, such as the Hopf-Lax formula. However, the methods work for relatively simple boundary geometries and conditions, and it is therefore not clear if they could be applied to the problems analyzed in this paper.


\section{Discontinuities and usable part of the boundary}
\label{sec: discontinuities}

It is well-known that, for pursuit-evasion games with deterministic dynamics such as the one we are considering here, the value of the game develops discontinuities.
One way to prove the existence of such discontinuities is by describing the value of the game as the evolution of a front $\front (t)\subset \game$ that, initially is set to be the boundary of the target set $\boundaryT$, and then evolves in $\game$ in such that way that
$\front (t)$ is the $t$-level set of the value $V(E,P)$.
See \cite{doi:10.1137/0524066,GIBOU201882,sethian1996theory} and the references therein for a review on level-set methods and \cite{evans1984differential, tsai2003level} for applications in differential games with discontinuous solutions. 
The evolution of the front can be described by means of the Hamiltonian $H(X,\rho)$ defined in \eqref{hamiltonian preliminaries}.
This is the key idea of the Fast Marching method \cite{sethian1996fast, sethian1999fast,tsitsiklis1995efficient,helmsen1996two} to solve the Eikonal equation, which is the Hamilton-Jacobi-Bellman associated with the shortest-path problem for a given target region.

From the perspective of the evaders (see the definition of the value in \eqref{value function finite h} and \eqref{value function infinite h}), our surveillance-evasion game can actually be seen as a shortest-path problem to the target $\target$, defined as the non-visibility region.
The particularity is that part of the state of the system is controlled by the pursuers, who try to delay the arrival time as much as possible.
The evaders have to optimise their strategy under the assumption that the pursuers respond optimally to their strategy.
If the pursuers were not mobile, i.e. if $g(P,b) \equiv 0$ in \eqref{game dynamics surv-evasion}, then the problem would be precisely a shortest path problem, and the associated HJI equation would be an Eikonal equation.
In this case, under mild assumptions on the dynamics $f(E,a)$, the front $\front(t)$ can be proven to be strictly monotone in $t$, and the value would be continuous.

As we shall see in subsection \ref{subsec: level-set}, the fact that the pursuers are mobile
prevents the front $\front(t)$ from being strictly monotone, and from this, it can be deduced that the value $V(E,P)$ is discontinuous.
In subsection \ref{subsec: usable part}, we shall pay special attention to the evolution of the front at time $t=0$.
The monotonicity of the front at $t=0$ determines the discontinuities arising on the boundary of the target set $\target$, and thus, the usable and non-usable part of $\boundaryT$.

\subsection{Level-set approach and discontinuities}
\label{subsec: level-set}

For any $t\geq 0$, we define the set
$$
\levelset (t) := \{ X\in \game \, : \quad \tilde{V}(t,X) = t\} = \{ X\in \game \, : \quad V (X) \geq t\}.
$$
The latter identity holds by virtue of Remark \ref{rmk:relation finit-inf horizon}.
We also define the front $\front(t)$ as the boundary\footnote{As everywhere throughout the paper, the boundary is taken with respect to the topology relative to $\free^{m+n}$.} of $\levelset(t)$, i.e.
\begin{equation}
\label{front def}
\front (t) = \partial\levelset(t).
\end{equation}
Observe that $\Gamma(0) = \boundary$, and since $\game = \free^{m+n}\setminus \target$, we have $\Gamma(0)=\boundaryT$.

The set $\levelset (t)$ is clearly non-increasing in the sense that
\begin{equation}
\label{level set monotonicity}
\levelset (t') \subset \levelset (t) \subset \levelset (0) = \game \qquad \forall 0 < t < t',
\end{equation}
and using the lower semicontinuity of $V(\cdot)$, it holds that
\begin{equation}
\label{V infty level set bound}
V (X) \leq t\qquad \forall X\in \front (t).
\end{equation}
For any $t>0$ fixed, the function $X\mapsto \tilde{V}(t,X)$ is constant in $\levelset(t)$.
Moreover,  the Hamiltonian defined in \eqref{hamiltonian explicit} satisfies
$H (X,0) = 0$ for all $X\in \game$.
Therefore, using \eqref{HJI evol}, for every $X$ in the interior of $\levelset (t)$ we have $\partial_t \tilde{V} (s, X) = 1$ for all $s\in (0, t]$.

The above reasoning implies that $\tilde{V}(s,X) = s$ for all $s\in (0,t]$, where $t$ is the first time $t>0$ such that $X\in \front(t)$, and thereafter we have $V (X) = \tilde{V}(s,X) = t$ for all $s\geq t$.
We can therefore describe the function $V$ in terms of the propagation of the front $\front (t)$ as
$$
V (X) = \inf \{ t\geq 0 \, : \quad X\in \front (t)\}.
$$
From this formula, we can deduce (see Lemma \ref{lem: discont front} below) that if the front $\front(t)$ is not strictly monotone (meaning that it has stationary parts), then the function $V(X)$ develops discontinuities.

\begin{lemma}
\label{lem: discont front}
Consider the surveillance-evasion game presented in subsection \ref{subsec: game setup}, let $V(X)$ be the value of the game as defined in \eqref{value function infinite h}, and let $\front (t)$ be the $t$-level set as defined in \eqref{front def}.
For any $X\in \game$, if there exists $0\leq t_1 < t_2$ such that $X\in \front (t)$ for all $t\in [t_1, t_2]$, then $V(\cdot)$ is discontinuous at $X$.
\end{lemma}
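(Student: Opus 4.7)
The plan is to argue that the two hypotheses $X\in\front(t_1)$ and $X\in\front(t_2)$ pin $V$ down from opposite sides at $X$, so that $V$ cannot be continuous there. The key ingredients are already present in the excerpt: on the one hand, the inequality \eqref{V infty level set bound}, $V(Y)\le t$ for every $Y\in\front(t)$, which follows from lower semicontinuity and the fact that boundary points of $\levelset(t)$ are limits of points of $\levelset(t)^c$; on the other hand, the tautology $\front(t)\subset\overline{\levelset(t)}=\overline{\{V\ge t\}}$.

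The steps I would carry out are as follows. First I would apply \eqref{V infty level set bound} at the level $t=t_1$: since $X\in\front(t_1)$, one obtains the upper bound
\[
V(X)\le t_1.
\]
Second, I would exploit the membership $X\in\front(t_2)=\partial\levelset(t_2)$. Because $X$ lies in the closure of $\levelset(t_2)=\{V\ge t_2\}$, there exists a sequence $Y_n\to X$ with $Y_n\in\levelset(t_2)$, i.e.\ $V(Y_n)\ge t_2$. Consequently
\[
\limsup_{Y\to X} V(Y)\;\ge\;\limsup_{n\to\infty} V(Y_n)\;\ge\;t_2.
\]
Combining this with the previous bound gives
\[
\limsup_{Y\to X} V(Y)\;\ge\;t_2\;>\;t_1\;\ge\;V(X),
\]
which shows that $V$ fails to be upper semicontinuous at $X$, and therefore cannot be continuous at $X$.

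I do not anticipate a genuine obstacle here; the only point that deserves a moment of care is why the earlier claim $V(X)\le t_1$ on $\front(t_1)$ really does hold for $X$ in the boundary of $\levelset(t_1)$ relative to $\free^{m+n}$. This is exactly the content of the parenthetical argument given just before Lemma~\ref{lem: discont front} in the excerpt: since $X$ is not an interior point of $\levelset(t_1)$, there is a sequence $X_n\to X$ with $V(X_n)<t_1$, and lower semicontinuity of $V$ then gives $V(X)\le\liminf_n V(X_n)\le t_1$. With this observation in hand, the two-sided squeeze described above completes the proof.
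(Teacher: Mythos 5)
Your proof is correct and follows essentially the same route as the paper: the bound $V(X)\le t_1$ from \eqref{V infty level set bound} combined with the fact that $X\in\front(t_2)$ forces points with value at least $t_2$ in every neighbourhood of $X$, yielding the discontinuity. The extra justification you give for $V(X)\le t_1$ via lower semicontinuity is exactly the argument the paper uses to establish \eqref{V infty level set bound} itself.
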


\begin{proof}
    In view of \eqref{V infty level set bound}, one has $V(X)\leq t_1$. On the other hand, since $X\in \front(t_2)$, for any $\varepsilon>0$, we have that $\sup \{ V(Y)\ : \ Y\in B(X,\varepsilon)\} \geq t_2$, which implies that $V(X)$ is discontinuous at $X$.
\end{proof}

The propagation of the front can be described by means of the Hamiltonian $H(X,\rho)$ defined in \eqref{hamiltonian preliminaries} as follows:
every point $X\in \front(t)$, the front is propagated at unit speed in the direction
$$
v = \nabla_\rho H (X, \bfn),
\qquad \text{where $\bfn$ is the normal inner vector to $\levelset(t)$ at $X$.}
$$
Recall that $\front (t) = \partial \levelset(t)$.
Due to the property \eqref{level set monotonicity}, the front is propagated only if $v$ points towards the interior of $\levelset (t)$, i.e. if $\nabla_\rho H (X, \bfn)\cdot \bfn > 0$.
Moreover, by the form of the Hamiltonian in \eqref{hamiltonian preliminaries} and the assumptions in \eqref{assumption general dynamics}, it is easy to verify that
\begin{equation}
\label{hamiltonian grad prop}
\nabla_\rho H (X, \rho)\cdot \rho =  H(X,\rho), \qquad
\forall \rho\in \R^{m+n}.
\end{equation}
This can be utilised to characterise discontinuities of $V(\cdot)$ in terms of the sign of the Hamiltonian.

\begin{proposition}
\label{prop: discont Hamiltonian}
Consider the surveillance-evasion game presented in subsection \ref{subsec: game setup}, let $V(X)$ be the value of the game as defined in \eqref{value function infinite h}, $\front (t)$ the $t$-level set as defined in \eqref{front def}, and $H(X,\rho)$ the Hamiltonian defined in \eqref{hamiltonian preliminaries}.
If there exist $t\geq 0$,  $X\in \front (t)$ and $r>0$ such that
the part of the front $\Gamma(t)\cap B(X,r)$ is a smooth hyper-surface and 
$$
H (Y, \bfn (Y)) \leq 0, \qquad
\forall Y\in \Gamma (t) \cap B(X, r),
$$
where $\bfn (Y)$ is the inner normal vector to $\levelset (t)$ at $Y$, then $V(\cdot)$ is discontinuous on $\Gamma (t) \cap B(X, r)$.
\end{proposition}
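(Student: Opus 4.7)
The plan is to apply Lemma~\ref{lem: discont front} by showing that every $Y \in \front(t) \cap B(X, r)$ remains on $\front(t')$ for every $t'$ in some interval $[t, t + \delta_Y]$ with $\delta_Y > 0$. Discontinuity of $V$ at each such $Y$ then follows, and since $Y$ is arbitrary in the smooth piece, the conclusion holds throughout $\front(t) \cap B(X, r)$.

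First I would use smoothness of $\front(t) \cap B(X, r)$ to build, near any such $Y$, a $C^1$ level representation: a neighborhood $U \subset \game$ of $Y$ contained in $B(X,r)$, and a $C^1$ function $\phi: U \to \R$ with $\phi(Y) = 0$, $\nabla \phi(Y) = \bfn(Y)$, $\front(t) \cap U = \{\phi = 0\}$ and $\levelset(t) \cap U \subset \{\phi \geq 0\}$. This reduces matters to a local invariance problem for the closed set $\{\phi \geq 0\}$ under the game dynamics.

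Second, I would interpret the hypothesis $H(Y,\bfn(Y)) \leq 0$ via the explicit form $H(X,\rho) = \sup_a \inf_b \bigl[-F(X,a,b)\cdot \rho\bigr]$: for every evader action $a$ there is a pursuer response $b$ with $F(Y,a,b)\cdot \nabla\phi(Y) \geq 0$. A measurable selection produces a feedback $(Z,a) \mapsto b(Z,a)$ on $U\times \mathcal{A}^m$, and given any evader control $a(\cdot)$ the rule $b(t) = b(X(t),a(t))$ defines a non-anticipating pursuer response, i.e.\ a strategy in the sense of the upper value. By the Isaacs condition already noted for \eqref{hamiltonian explicit}, the lower and upper values agree, so this strategy suffices to estimate $V(Y)$ from below. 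Continuity of $F$ and $\nabla\phi$ together with compactness of $\mathcal{B}^n$ then provides a uniform $\delta_Y > 0$ such that, for every evader control $a(\cdot)$, the trajectory starting at $Y$ under this pursuer response stays inside $\{\phi \geq 0\}\cap U \subset \levelset(t)$ on $[0,\delta_Y]$. Combined with the dynamic programming principle this yields $V(Y) \geq t + \delta_Y$, hence $Y \in \levelset(t')$ for every $t' \in [t,t+\delta_Y]$. Since $Y \in \partial\levelset(t)$, any neighborhood of $Y$ also contains points $Z$ with $V(Z) < t \leq t'$, which lie outside $\levelset(t')$; therefore $Y \in \partial\levelset(t') = \front(t')$ on the whole interval $[t,t+\delta_Y]$, and Lemma~\ref{lem: discont front} yields the discontinuity.

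The main obstacle is this second step, specifically the passage from the pointwise inequality $H(Y,\bfn(Y))\leq 0$ to a uniform-in-$a(\cdot)$ invariance time $\delta_Y$ for $\{\phi\geq 0\}\cap U$. This is exactly the viability/discriminating-domain type of argument developed in \cite{cardaliaguet1999set, cardaliaguet2000pursuit}, and the Lipschitz dependence of $F$ in $X$ together with the smoothness of $\phi$ is what makes a uniform $\delta_Y$ available. The case $t = 0$ requires no modification, since none of the steps uses $t > 0$.
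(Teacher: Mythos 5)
Your plan to reduce the statement to Lemma~\ref{lem: discont front} is the right one, and your reading of $H(Y,\bfn(Y))\leq 0$ (for every evader action there is a pursuer response making the velocity point weakly into $\levelset(t)$) is correct. But the pivotal step is wrong: you conclude $V(Y)\geq t+\delta_Y$ for $Y\in\front(t)\cap B(X,r)$, and this cannot hold. It contradicts the paper's own observation \eqref{V infty level set bound} that $V\leq t$ everywhere on $\front(t)$ (a direct consequence of lower semicontinuity), and it fails flagrantly in the case $t=0$, which you claim needs no modification: there $\front(0)=\boundaryT$, the game starting at $Y$ ends at time zero, and $V(Y)=0$ no matter what the pursuer does, even on the non-usable part. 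The hidden error in the viability step is twofold. First, you only arranged $\levelset(t)\cap U\subset\{\phi\geq 0\}$ but then use the reverse inclusion $\{\phi\geq 0\}\cap U\subset\levelset(t)$, which is not available: the front itself, where $\phi=0$, consists of points with $V\leq t$, so confining the trajectory to $\{\phi\geq 0\}$ does not confine it to $\{V\geq t\}$, and the dynamic programming lower bound $V(Y)\geq \delta_Y+t$ does not follow (with only the non-strict inequality $H\leq 0$ the pursuer may merely slide the state along the front). Second, starting the invariance argument at $Y$ itself is the wrong base point: the point whose value must be kept large is not $Y$ (whose value is small — that is precisely the discontinuity being detected) but the nearby points on the interior side of the front. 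What has to be shown is that $\levelset(t')$ still clusters at $Y$ for $t'$ slightly larger than $t$, i.e.\ that interior points $Z$ near $Y$ with $\phi(Z)>0$ satisfy $V(Z)\geq t'$; combined with the (correct) observation that every neighbourhood of $Y$ contains points with $V<t$, this gives $Y\in\front(t')$ and Lemma~\ref{lem: discont front} applies. Running your discriminating-domain argument from such $Z$ still requires handling the endpoint-value issue above (the trajectory may terminate on $\front(t)$ where $V\leq t$), which is exactly where the non-strict inequality makes the rigorous version delicate; the machinery of \cite{cardaliaguet1999set,cardaliaguet2000pursuit} is the natural tool, but your current chain does not close it.

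For comparison, the paper argues differently and more cheaply: it uses the monotonicity \eqref{level set monotonicity} together with the positive one-homogeneity identity \eqref{hamiltonian grad prop}, $\nabla_\rho H(Y,\rho)\cdot\rho=H(Y,\rho)$, within the level-set description of the front: the front can only advance at points where $\nabla_\rho H(Y,\bfn(Y))\cdot\bfn(Y)=H(Y,\bfn(Y))>0$, so under the hypothesis it is locally stationary and $\front(t)\cap\front(t+\varepsilon)\cap B(X,r)\neq\emptyset$ for small $\varepsilon$, after which Lemma~\ref{lem: discont front} concludes. In other words, the mechanism is ``the front does not move, while $V$ at the front point stays $\leq t$'', not ``$V$ at the front point exceeds $t$''. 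There are also two secondary technical points in your write-up worth flagging: a measurable state-feedback $b(X(t),a(t))$ does not by itself define an Elliott--Kalton non-anticipating strategy (the closed-loop trajectory depends on $b$, so the construction as stated is circular and existence of solutions is unclear), and the Isaacs-condition appeal to swap lower and upper values should be replaced by working directly with the value the paper uses.
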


\begin{proof}
The proof follows from the monotonicity property of $\levelset(\cdot)$ in \eqref{level set monotonicity}, which implies that the front is propagated at $Y\in \front(t)$ only if
$$
\nabla_\rho H (Y, \bfn(Y))\cdot \bfn(Y) > 0.
$$
Using \eqref{hamiltonian grad prop}, we deduce that the front is propagated only at points $Y\in \Gamma (t)$ satisfying 
$H (Y, \bfn (Y)) >0.$
Hence, by the hypothesis of the proposition, we have $\Gamma (t)\cap \Gamma (t+\varepsilon) \cap B(X, r)\neq \emptyset$, for $\varepsilon>0$ small enough, which by Lemma \ref{lem: discont front} implies that $V(\cdot)$ is discontinuous at $\Gamma (t) \cap B(X, r)$.
\end{proof}

\subsection{The usable part of the boundary}
\label{subsec: usable part}

The conclusion of Proposition \ref{prop: discont Hamiltonian} can be used to characterise the usable part of the boundary of the target set $\target$.
The usable part of $\boundaryT$, denoted by $\usable$, is the set of positions $(E,P)\in \boundary$ such that,
if the initial position of the game is sufficiently close to $(E,P)$, then the evaders have a non-anticipating strategy that ends the game almost immediately, no matter the control implemented by the pursuers. On the other hand, if $(E,P)\in \boundaryT$
is in the non-usable part, then there exists a small neighbourhood of $(E,P)$ such that, if the initial position is in this neighbourhood, then
the pursuers have a non-anticipating strategy which prevents the state from hitting $\boundaryT$ for a uniformly positive time.
Taking into account that, by definition, the value of the game $V(E,P)$ vanishes on the boundary of the target set $\boundaryT$, we can define the usable part $\usable$ as the points where $V(\cdot)$ is continuous, and the non-usable part as the points where it is not.

We recall from the definition of the front $\front(t)$ in the previous section, see \eqref{front def}, that $\Gamma(0) = \boundary = \boundaryT$.
Moreover, in our case, the boundary of the target region $\target$, which is defined in \eqref{target} as the non-visibility set is smooth.
Hence, Proposition \ref{prop: discont Hamiltonian}, applied at $t=0$, provides a characterisation of the usable part of $\boundaryT$.

\begin{corollary}
\label{cor: usable part general}
Consider the surveillance-evasion game presented in subsection \ref{subsec: game setup}, and let $H(X,\rho)$ be the Hamiltonian defined in \eqref{hamiltonian preliminaries}.
Let $X\in \boundaryT$ be such that $\boundaryT$ is smooth at $X$, and let $\bfn (X)$ be the normal inner vector to $\game$, or equivalently, the outer normal vector to $\target$. 
Then we have the following:
\begin{enumerate}
    \item If $H (X, \bfn (X)) > 0$, then $X$ is in the usable part.
    \item If $H (X, \bfn (X)) < 0$, then $X$ is in the non-usable part.
\end{enumerate}
\end{corollary}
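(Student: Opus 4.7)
My plan is to deduce both statements from Proposition~\ref{prop: discont Hamiltonian} applied at $t=0$ (together with its constructive counterpart for the usable direction), using that $\front(0)=\partial\target$ is smooth at $X$ by hypothesis and that both the Hamiltonian $H$ and the outer normal $\mathbf{n}(\cdot)$ vary continuously on a neighbourhood of $X$ in $\partial\target$.

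For statement~(ii), the non-usable direction, Proposition~\ref{prop: discont Hamiltonian} applies directly. Continuity of $H$ in $(X,\rho)$ and continuity of $\mathbf{n}(\cdot)$ on the smooth piece of $\partial\target$ at $X$ yield $r>0$ such that $H(Y,\mathbf{n}(Y))<0$ for every $Y\in\partial\target\cap B(X,r)$. The proposition then gives discontinuity of $V$ on this portion of the boundary. Since $V\equiv 0$ on $\partial\target$, discontinuity at $X$ means there exist $Y\in\game$ arbitrarily close to $X$ with $V(Y)$ bounded below by a positive constant, which is precisely the defining property of the non-usable part recalled at the beginning of Subsection~\ref{subsec: usable part}.

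For statement~(i), the usable direction is not directly covered by Proposition~\ref{prop: discont Hamiltonian}, so I would establish continuity of $V$ at $X$ from above. By~\eqref{hamiltonian grad prop}, $H(X,\mathbf{n}(X))>0$ gives $\nabla_\rho H(X,\mathbf{n}(X))\cdot\mathbf{n}(X)>0$, i.e., the front propagates strictly into $\game$ at $X$. Exploiting the separated form of $H$ in~\eqref{hamiltonian explicit} (which satisfies the Isaacs condition), the evader has a control value $a^\star\in\mathcal{A}^m$ such that $-F(X,a^\star,b)\cdot\mathbf{n}(X)\ge c_0>0$ for every $b\in\mathcal{B}^n$. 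Lipschitz continuity of $F$ together with smoothness of $\partial\target$ propagate this inequality to a neighbourhood of $X$, with $\mathbf{n}(X)$ replaced by the outer normal at the closest boundary point. The non-anticipating open-loop strategy $\alpha[b]\equiv a^\star$ then drives any $Y\in\game$ close to $X$ into $\target$ in time at most $C\,\dist(Y,\partial\target)$, uniformly in the pursuer's response, so $V(Y)\to 0$ as $Y\to X$ in $\game$ and $X$ belongs to the usable part.

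The main obstacle will be preserving admissibility of the open-loop strategy: each component of $E(\cdot)$ and $P(\cdot)$ must stay in $\free$. This is where the interior-cone conditions built into~\eqref{assumption general dynamics} intervene, allowing a small perturbation of $a^\star$ that keeps the trajectory inside $\free^m$ without destroying the sign of $-F\cdot\mathbf{n}(X)$, along the state-constraint framework of~\cite{cardaliaguet2000pursuit,FRANKOWSKA2000449} that underlies the whole set-up. A secondary subtlety is that for $Y\neq X$ the outer normal at the closest boundary point of $Y$ need not agree with $\mathbf{n}(X)$, but the assumed smoothness of $\partial\target$ at $X$ controls this discrepancy by an $o(1)$ that can be absorbed into $c_0/2$.
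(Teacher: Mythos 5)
Your treatment of statement (ii) is exactly the paper's route: the paper offers no separate proof of the corollary beyond invoking Proposition~\ref{prop: discont Hamiltonian} at $t=0$, and your continuity argument (extending $H(Y,\bfn(Y))<0$ from $X$ to a smooth neighbourhood of $X$ in $\boundaryT$ so that the proposition applies) is the intended reading. For statement (i), however, you take a genuinely different and more explicit route. The paper justifies the usable direction only through the level-set heuristic preceding the proposition: where $H(Y,\bfn(Y))>0$ the front $\front(t)$ propagates strictly into $\game$ at $t=0$, via \eqref{hamiltonian grad prop}, so $V$ stays small near such boundary points; this is never turned into a strategy-level argument. You instead extract from the separated Hamiltonian \eqref{hamiltonian explicit} a control value $a^\star$ with $-F(X,a^\star,b)\cdot\bfn(X)\geq c_0>0$ uniformly in $b$, and use the constant non-anticipating strategy $\alpha[b]\equiv a^\star$ together with Lipschitz continuity of $F$ and smoothness of $\boundaryT$ at $X$ to force the state into $\target$ in time $O(\dist(Y,\boundaryT))$, which gives continuity of $V$ at $X$ directly from the paper's working definition of $\usable$. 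This buys a verifiable proof of the usable direction (which the paper leaves at the level of front-propagation intuition), at the cost of the state-constraint technicalities you flag.

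One point deserves more care than your closing remark gives it: the supremum in \eqref{hamiltonian preliminaries} ranges over all of $\mathcal{A}^m$, including control values that are inadmissible when some evader component sits on $\partial\obs$, so at such points the maximising $a^\star$ may violate the constraint $E(t)\in\free^m$ and a perturbation argument must be shown not to destroy the lower bound $c_0$; the interior-cone conditions in \eqref{assumption general dynamics} make this plausible (and it is the standard device in \cite{cardaliaguet2000pursuit,FRANKOWSKA2000449}), but it is not automatic, since admissible velocities could a priori realise strictly less than $H(X,\bfn(X))$. This does not affect the comparison with the paper, which simply does not address the usable direction at this level of detail, but it is the one step of your argument that would need to be written out in full.
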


Let us end the section by considering the simple case with two players (one evader and one pursuer) and the isotropic and homogeneous dynamics in \eqref{homogeneous isotropic intro}.
Recall that the parameters $\gamma_e$ and $\gamma_p$ represent the maximum speeds for the evaders and the pursuer respectively.
In this case, the Hamiltonian \eqref{hamiltonian explicit} can be written explicitly as
$$
\rho:= (\rho_E, \rho_P)\in \R^2\times \R^2 \longmapsto H(\rho_E, \rho_P) = \gamma_e| \rho_E| - \gamma_p |\rho_P|.
$$
If $(E,P)\in \boundaryT$ is such that the segment $[E,P]$ is tangent to $\obs$ at a single point $\bfx^\ast\in \partial\obs$, then $\boundaryT$ is differentiable at $(E,P)$,  and applying Corollary \ref{cor: usable part general}, we obtain that
\begin{equation}
\label{usable simple case}
\text{if}\quad \dfrac{\gamma_e}{|E-\bfx^\ast|} > \dfrac{\gamma_p}{|P-\bfx^\ast|},
\quad \text{then $(E,P)$ is in the usable part,} 
\end{equation}
and
\begin{equation}
\label{non-usable simple case}
\text{if}\quad \dfrac{\gamma_e}{|E-\bfx^\ast|} < \dfrac{\gamma_p}{|P-\bfx^\ast|},
\quad \text{then $(E,P)$ is in the non-usable part.}
\end{equation}
In the two following sections we analyse the behaviour of the value in the interface between these two regimes, i.e. when $\gamma_e |P-\bfx^\ast| = \gamma_p |E-\bfx^\ast|$. Interestingly enough, this one depends drastically on the regularity of the obstacle $\obs$.

\section{Boundary estimates for smooth obstacles}
\label{sec: boundary estimates smooth}

In this section, we study the behaviour of the value of the game near the usable part of the boundary of the target set $\target$ (denoted by $\usable$).
In this case, we consider boundary points $(E,P)\in \boundaryT$ such that the line-of-sight between $E$ and $P$ is tangent to $\obs$ at a single point $\bfx^\ast$ where $\partial\obs$ is smooth (see the condition \eqref{assum boundary smooth intro}).
The goal is to prove Theorem \ref{thm: smooth obst intro} stated in Section \ref{sec: main results}, which applies to the two-player surveillance-evasion game (with one evader and one pursuer) on a two-dimensional environment with isotropic and homogeneous dynamics \eqref{homogeneous isotropic intro}.
As a consequence of this result we obtain, in Corollary \ref{cor: boundary estimates smooth}, estimates for the profile of the value along the boundary of the target set (see Remark \ref{rmk: boundary profile}).
In particular, we describe how the value transitions from being zero on $\usable$ to being positive on $\boundaryT\setminus \usable$.

At the end of this section, we state and prove Theorem \ref{thm: estimates k const}, which is a more explicit version of Theorem \ref{thm: smooth obst intro}.
The arguments for proving Theorem \ref{thm: estimates k const} rely on the analysis of the dynamics of the horizon-points of the players' visibility. For a vantage point in $P\in \free$ (resp. $E$), the horizon points determine the part of the boundary of the obstacle which is visible from $P$. The dynamics of the horizon points were first studied in \cite{tsai2004visibility}. Here, we use the equations derived in \cite{tsai2004visibility} to obtain estimates for the value of the game when the initial position is close to the boundary of the target set, i.e., the line-of-sight between $E$ and $P$ is close to the obstacle.
Moreover, in view of condition \eqref{assum boundary smooth intro}, we assume that part portion of $\partial\obs$ which is closer to the line of sight is uniformly convex and smooth.
Let us mathematically describe the assumption on the initial position of the game $(E,P)\in \game$.
 Let us recall that the game domain is defined as $\game :=\free^2\setminus\target$.

\begin{assumption}[Initial position of the game]
\label{assump: initial pos}
We assume that there is a connected portion of the boundary $\partial \obs_L\subset \partial\obs$, which is smooth and uniformly convex.
More precisely, there exists an arc-length parametrisation  $\Sigma : [-L,L] \to \partial\obs_L$ which is twice differentiable, and such that the curvature  $\kappa (\mathbf{s}) := |\Sigma''(\mathbf{s})|$ is uniformly positive and Lipschitz continuous, i.e.
\begin{equation}
    \label{hyp smooth and convex}
    \left|\kappa (\mathbf{s})\right| \geq \kappa_0
    \quad \text{and} \quad
    \left|  \kappa (\mathbf{s}_1) - \kappa (\mathbf{s}_2) \right| \leq C_L \left| \mathbf{s}_1 - \mathbf{s}_2  \right|, \qquad \forall \mathbf{s}, \mathbf{s}_1, \mathbf{s}_2\in [-L,L],
\end{equation}
for some $\kappa_0>0$ and $C_L\geq 0$. Here, we include the case $C_L=0$, which accounts for the case of $\partial \obs_L$ being the arc of a circle.

As for the initial position of the game $(E,P)\in \game$, we assume that it satisfies the following:
\begin{enumerate}
\item The upper-horizon of the pursuer's visibility, denoted by $\hu$, and the lower-horizon for the evader's visibility, denoted by $\hl$, are both contained in $\partial\obs_L$. In other words, 
if we define
$$
\hus = \Sigma^{-1} (\hu)
\quad \text{and} \quad
\hls  = \Sigma^{-1} (\hl),
$$
then the part of $\partial\obs_L$ visible from $E$ and $P$ is
$$
\{ \Sigma (s) \, : \quad s\in (\hls, L ] \}
\quad \text{and} \quad
\{ \Sigma (s) \, : \quad s\in [-L, \hus)  \},
\quad  \text{respectively.}
$$
\item The visible part of $\partial\obs_L$ from $E$ and $P$ have nonempty intersection, i.e.
$$
\hls < \hus,
$$
which implies that the straight segment joining $E$ and $P$ is at positive distance from $\partial\obs_L$.
See Figure \ref{fig:initial pos} for an illustration of the initial position of the game.
\end{enumerate}
\end{assumption}

\begin{figure}
    \centering
    \includegraphics[scale=.65]{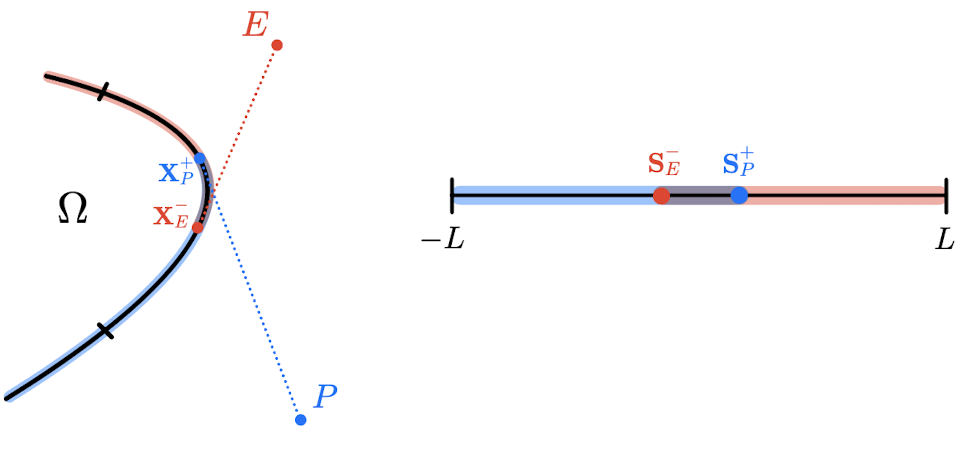}
    \caption{Illustration of the assumption about the initial position of the game described in Assumption \ref{assump: initial pos}. The roles of $E$ and $P$ are interchangeable by changing the sign of the parametrisation.}
    \label{fig:initial pos}
\end{figure}

Given an initial position $(E,P)\in \game$ satisfying Assumption \ref{assump: initial pos} and a game trajectory in a time interval $(0,t_0]$, represented by
$$
t\in (0,t_0] \longmapsto (E(t), P(t)),
$$
it is not difficult to deduce that, if $t_0>0$ is small enough (to make precise later in \eqref{t_0 small cond}), then the game ends in the interval $(0,t_0]$ if and only if there exists $t^\ast\in (0,t_0]$ such that 
$$
\hls (t^\ast) = \hus (t^\ast),
$$
where $\hls (t) = \Sigma^{-1} (\hl(t))$ and $\hus (t) = \Sigma^{-1} (\hu (t))$, and $\hl (t)$ and $\hu (t)$ are the lower and upper visibility horizons for $E(t)$ and $P(t)$ respectively, for all $t\in (0,t_0].$
Here, $t_0$ must be so small that the players don't have enough time to move their visibility horizons out of the parametrised portion of the boundary $\partial\obs_L$.

The main ingredient to obtain estimates for the value of the game is to analyse, for any $t\in (0,t_0]$, the reachable sets of $\hls (t)$ and $\hus (t)$ from the initial position of the game.
In other words, the reachable set of the visibility horizons, expressed using the parameter of the parametrisation $\Sigma(\cdot)$.
Since we are considering that the dynamics are homogeneous and isotropic as in \eqref{homogeneous isotropic intro},  for any $t>0$ the reachable sets in $\free$ from the initial positions $E$ and $P$ are contained in the closed balls $\overline{B(E, \, \gamma_e t)}$ and $\overline{B(P, \, \gamma_p t)}$ respectively.
In particular, if $t>0$ is so small that the players don't have enough time to reach the boundary of the obstacle $\obs$, then the reachable sets from $E$ and $P$ are precisely the aforementioned closed balls.
Given $(E,P)\in \game$ satisfying Assumption \ref{assump: initial pos} and $t>0$ small enough, we can define, for any $t\in [0,t_0]$ and $v\in \overline{B(0,1)}$, 
\begin{equation}
\label{def horizon limits t,v}
\hls (t,v) = \Sigma^{-1} \big( \hl (t,v) \big)
\quad \text{and} \quad
\hus (t,v) = \Sigma^{-1} \big( \hu (t,v) \big),
\end{equation}
where
$\hl (t,v)\in \partial \obs_L$ is the lower-horizon for the visibility from the vantage point $E+\gamma_e t v$, and analogously,
$\hu (t,v)\in \partial \obs_L$ is the upper-horizon for the visibility from the vantage point $P+\gamma_p t v$.
Due to the continuity of the dynamics and the parametrisation $\Sigma(\cdot)$, both functions $\hls (t,v)$ and $\hus (t,v)$ are continuous.

Let us give a sketch of the proof of Theorem \ref{thm: estimates k const}.
\begin{enumerate}
\item In Proposition \ref{prop: value represen formula}, we provide a representation formula for the value of the game, as the smallest time such that the function
\begin{equation}
    \label{S def}
S (t) = \max_{v\in \overline{B(0,1)}} \hus (t,v) - \max_{v\in \overline{B(0,1)}} \hls (t,v)
\end{equation}
vanishes.  Note that $S(t)$ is the difference between the value function of two optimal control problems, consisting on the maximisation of the upper and lower visibility horizons at time $t$ from a given initial position $P$ and $E$ respectively.
\item In Lemma \ref{lem: max horizons}, we characterise the maximisers of the optimal control problems appearing in the function $S(t)$.
\item In the Appendix \ref{sec: lemmas boundary}, we use Lemma \ref{lem: max horizons}, along with the equations for the dynamics of the visibility horizons from \cite{tsai2004visibility}, to estimate both terms in function $S(t)$.
\item In Theorem \ref{thm: value estimates}, we put the estimates together to estimate the function $S(t)$ and obtain an upper and a lower bound for the value of the game under suitable conditions.
The proof of this result only applies to the case when the Lipschitz constant of the curvature function $C_L$ is small enough.
\item Finally, we complete the proof of Theorem \ref{thm: estimates k const} by means of a comparison argument, using an inner and outer ball relative to the obstacle, which allows us to get rid of the restriction on the Lipschitz constant $C_L$.
\end{enumerate}

The smallness condition on $t_0>0$ that will be used from now on reads as
\begin{equation}
\label{t_0 small cond}
\begin{array}{c}
t_0 \leq  \min \left\{ \dfrac{\operatorname{dist} (E, \obs)}{2\gamma_e},
\, \dfrac{\operatorname{dist} (P, \obs)}{2\gamma_p}
\right\}   \\
\noalign{\vspace{6pt}}
\displaystyle\max_{v\in \overline{B(0,1)}} \left| \hls (t_0,v)\right| < L  
  \quad \text{and} \quad
 \displaystyle\max_{v\in \overline{B(0,1)}} \left|\hus (t_0,v)\right| < L
\end{array}
\end{equation}
Roughly, condition \eqref{t_0 small cond} means that the players do not have enough time to neither reach the boundary of the obstacle nor to move their visibility horizons out of the parametrised part of the boundary $\partial\obs_L$.

The following proposition provides a necessary and sufficient condition for the smallness of the value of the game. Moreover, it also provides a representation formula in the case the smallness condition holds.

\begin{proposition}
    \label{prop: value represen formula}
    Consider the two-player surveillance-evasion game in a two-dimensional environment with one obstacle and isotropic and homogeneous dynamics \eqref{homogeneous isotropic intro}. Let the initial position of the game $(E,P)\in \game$ satisfy Assumption \ref{assump: initial pos}, and let $t_0>0$ satisfy \eqref{t_0 small cond}. For any $t\in [0,t_0]$, define the function
    \begin{equation}
    S (t) = \max_{v\in \overline{B(0,1)}} \hus (t,v) - \max_{v\in \overline{B(0,1)}} \hls (t,v).
    \end{equation}
    Here, $\hus (t,v)$ and $\hls (t,v)$ are defined as in \eqref{def horizon limits t,v}.
    
    Then it holds that
    \begin{enumerate}
        \item $V (E,P) \leq t_0$ if and only if $S(t) = 0$ for some $t\in [0,t_0]$.
        \item If $V (E,P) \leq t_0$, then
        $$
        V(E,P) = \min \{ t\geq 0, \quad \text{s.t.} \quad S(t)= 0\}.
        $$
    \end{enumerate}
\end{proposition}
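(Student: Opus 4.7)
The plan is to prove $V(E,P) = T_\ast := \min\{t \in [0,t_0] : S(t) = 0\}$ when this minimum exists, and $V(E,P) > t_0$ otherwise; both statements of the proposition then follow immediately. Set $U(t) := \max_v \hus(t,v)$ and $L(t) := \max_v \hls(t,v)$, so $S = U - L$. Under Assumption~\ref{assump: initial pos} and~\eqref{t_0 small cond}, the visibility horizons of every reachable position remain inside the smooth parametrised arc $\partial\obs_L$, which makes $(t,v) \mapsto \hus(t,v)$ and $(t,v) \mapsto \hls(t,v)$ continuous on the compact $[0,t_0]\times\overline{B(0,1)}$. Hence $U$, $L$, and $S$ are continuous on $[0,t_0]$, and $S(0) > 0$ because $(E,P)\notin \boundaryT$.

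For the upper bound $V(E,P) \leq T_\ast$, pick a maximiser $v_e^\ast \in \overline{B(0,1)}$ of $v\mapsto \hls(T_\ast,v)$ and use the evader's non-anticipating constant open-loop strategy $\alpha^\ast[b](s)\equiv v_e^\ast$, admissible by \eqref{t_0 small cond}. For any $b \in \mathcal{C}_p(P)$, write $P(T_\ast) = P + \gamma_p T_\ast \tilde v$ with $\tilde v \in \overline{B(0,1)}$; then
\[
\hus(P(T_\ast)) \;=\; \hus(T_\ast,\tilde v) \;\leq\; U(T_\ast) \;=\; L(T_\ast) \;=\; \hls(E(T_\ast)),
\]
the middle equality being $S(T_\ast)=0$. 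Thus visibility is lost by time $T_\ast$ for any pursuer response, which yields $V(E,P) \leq T_\ast$.

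For the lower bound $V(E,P) \geq T_\ast$, fix any $t < T_\ast$ and any evader strategy $\alpha$. The natural symmetric candidate is the pursuer's constant control $v_p^\ast := \arg\max_v \hus(t,v)$, which gives $\hus(P(t)) = U(t) > L(t) \geq \hls(E(t))$ at the terminal time $t$. \emph{The main obstacle is intermediate times}: since $v_p^\ast$ is tuned to $t$, the map $s \mapsto \hus(s,v_p^\ast)$ need not equal $U(s)$ for $s \in (0,t)$, so it may drop below $L(s)$ and the game could end prematurely. To bypass this, for each $v_p \in \overline{B(0,1)}$ set $E_{v_p}(\cdot) := \alpha[b_{v_p}](\cdot)$ and define the first visibility-loss time
\[
\tau(v_p) := \inf\{s \in [0,t_0] : \hls(E_{v_p}(s)) \geq \hus(s,v_p)\}.
\]
Arguing by contradiction, suppose $\tau(v_p)\leq t$ for every $v_p$. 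A continuity/fixed-point argument on the map $v_p \mapsto \arg\max_v \hus(\tau(v_p),v)$ (this is the key technical step, resting on compactness of $\overline{B(0,1)}$ and joint continuity of the horizon data) produces a $v_p^\sharp$ satisfying $\hus(\tau(v_p^\sharp),v_p^\sharp) = U(\tau(v_p^\sharp))$. Since $\tau(v_p^\sharp) < T_\ast$ gives $S(\tau(v_p^\sharp)) > 0$,
\[
\hus(\tau(v_p^\sharp),v_p^\sharp) \;=\; U(\tau(v_p^\sharp)) \;>\; L(\tau(v_p^\sharp)) \;\geq\; \hls(E_{v_p^\sharp}(\tau(v_p^\sharp))),
\]
contradicting the defining identity $\hus(\tau(v_p^\sharp),v_p^\sharp) = \hls(E_{v_p^\sharp}(\tau(v_p^\sharp)))$ of $\tau(v_p^\sharp)$. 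Hence some $v_p$ has $\tau(v_p) > t$, which yields $V(E,P) > t$; letting $t \uparrow T_\ast$ gives $V(E,P) \geq T_\ast$, and combining with the upper bound establishes (i) and (ii).
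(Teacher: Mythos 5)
Your first half is sound and coincides with the paper's Step~1: letting the evader run the constant control pointing to the maximiser of $v\mapsto\hls(T_\ast,v)$ and comparing with the pursuer's reachable ball at time $T_\ast$ gives $V(E,P)\le T_\ast$ whenever $S(T_\ast)=0$ (the intermediate-value remark needed to conclude that the state actually crosses $\boundaryT$ is implicit in both your argument and the paper's). The genuine problem is your lower bound, and specifically the ``continuity/fixed-point argument'' that you yourself flag as the key technical step. The trajectory $E_{v_p}(\cdot)$ is the response of an \emph{arbitrary} non-anticipating strategy $\alpha$ to the constant control $b_{v_p}$; non-anticipativity only constrains $\alpha$ on pairs of controls that agree on an initial time interval, and two distinct constant controls disagree on every interval $(0,\tau)$. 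Hence $v_p\mapsto E_{v_p}(\cdot)$, and with it $v_p\mapsto \hls(E_{v_p}(s))$ and $v_p\mapsto\tau(v_p)$, carries no continuity whatsoever, so there is nothing on which to base a Brouwer-type argument for the existence of $v_p^\sharp$. Even if one artificially assumed that $\alpha$ responds continuously, a first hitting time such as $\tau(v_p)$ is in general only semicontinuous (the function $s\mapsto \hus(s,v_p)-\hls(E_{v_p}(s))$ may touch zero and recover), so the composed map $v_p\mapsto$ maximiser of $\hus(\tau(v_p),\cdot)$ still need not be continuous. As it stands, the direction ``$V(E,P)\le t_0 \Rightarrow S$ vanishes on $[0,t_0]$'' and the minimum formula in (ii) are not established.

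For comparison, the paper does not try to construct, for every strategy $\alpha$, a pursuer control that survives past each $t<T_\ast$. Instead it assumes $V(E,P)=t^\ast\le t_0$ and exploits the special structure of the dynamics: since \eqref{t_0 small cond} keeps the players away from $\partial\obs$, the dynamics \eqref{homogeneous isotropic intro} are unconstrained, homogeneous and isotropic on $[0,t^\ast]$, and Pontryagin's maximum principle forces the optimal end-game trajectories to be straight lines hitting $\boundaryT$ orthogonally; the common terminal direction $v^\ast$ is then the outer normal at the tangency point, which by Lemma \ref{lem: max horizons} is precisely the direction maximising both $\hus(t^\ast,\cdot)$ and $\hls(t^\ast,\cdot)$, whence $S(t^\ast)=0$, and minimality follows from Step~1. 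If you want to salvage your route, you would have to replace the fixed-point step by an argument of this type (a characterisation of near-optimal trajectories, or a compactness/measurable-selection argument that does not require continuity of $\alpha$); as written, the lower bound has a genuine gap.
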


\begin{proof}
    First of all, let us observe that if $\hls (t,v_E) = \hus (t,v_P)$ for some $t\in [0,t_0]$, and $v_E, v_P\in \overline{B(0,1)}$ then the pair of points
    $\left( E + \gamma_e t v_E, \, P + \gamma_p t v_P \right)$ is in $\boundaryT$, which we recall is the target set.
    
    \underline{\textit{Step 1:}} Let us assume that $S(t)=0$ for some $t\in (0,t_0]$. Then, the evader has a strategy with endpoint $E + \gamma_E t v^\ast_E$, for some $v^\ast \in \overline{B(0,1)}$ which satisfies
    $$
    \hus (t,v_P) \leq \hls (t,v^\ast_E) \qquad \forall v_P\in \overline{B(0,1)}.
    $$
    This implies that $V (E,P)\leq t$, since we recall that in the initial position we have $\hus > \hls$. 
    
    \underline{\textit{Step 2:}} For the other direction, let us assume that $V (E,P) = t^\ast\in (0,t_0]$. By the homogeneity of the dynamics in \eqref{homogeneous isotropic intro}, and since the players are assumed to not have enough time to reach the boundary of the obstacle in $[0,t_0]$, we can use Pontryagin's maximum principle to deduce that the optimal trajectory of the game is a straight line in $\free^2$ and that, at the terminal point $\big(E(t^\ast), P(t^\ast)\big)\in \usable$, the direction of the players is orthogonal to $\boundaryT$.
    Namely, there exists a unitary vector $v^\ast$ such that the optimal trajectory of the game $\big(E(\tau), P(\tau)\big)$ is given by
    $$
    E(\tau) = E - \gamma_e v^\ast \tau \quad \text{and} \quad
    E(\tau) = P + \gamma_p v^\ast \tau,
    $$
    where $v^\ast$ is the unit vector normal to the line segment $\Lambda^\ast$ joining $E(t^\ast)$ and $P(t^\ast)$. See Figure \ref{fig: end-game proof} for an illustration.
    
    Now, since $\big(E(t^\ast), P(t^\ast)\big)\in \boundaryT$, the line $\Lambda^\ast$ is also tangent to the obstacle, at the lower-horizon of the visibility from $E(t^\ast)$ and the upper-horizon of the visibility from $P(t^\ast)$. This implies in particular that $v^\ast = \bfn (\hls (t^\ast , -v^\ast)) = \bfn(\hus (t^\ast, v^\ast))$, where $\bfn (\hls (t^\ast))$ denotes the outer normal vector to $\partial \obs_L$ at $\hl (t^\ast,-v^\ast) = \Sigma (\hls (t^\ast, -v^\ast))$. Hence, by Lemma \ref{lem: max horizons} (proved below), we deduce that
    $$
    \max_{v\in \overline{B(0,1)}} \hls (t^\ast,v) = \hls (t^\ast, -v^\ast)
    \quad \text{and} \quad
    \max_{v\in \overline{B(0,1)}} \hus (t^\ast,v) = \hus (t^\ast, v^\ast),
    $$
    and since $\hls (t^\ast, -v^\ast) = \hus (t^\ast, v^\ast)$, we deduce that $S(t^\ast) = 0$. This completes the proof of the first statement of the lemma. By the same argument used in the step 1 of this proof, one can easily deduce that $S(t)>0$ for all $t<t^\ast$, which proves the second statement of the lemma.
\end{proof}

\begin{figure}
    \centering
    \includegraphics[scale=.4]{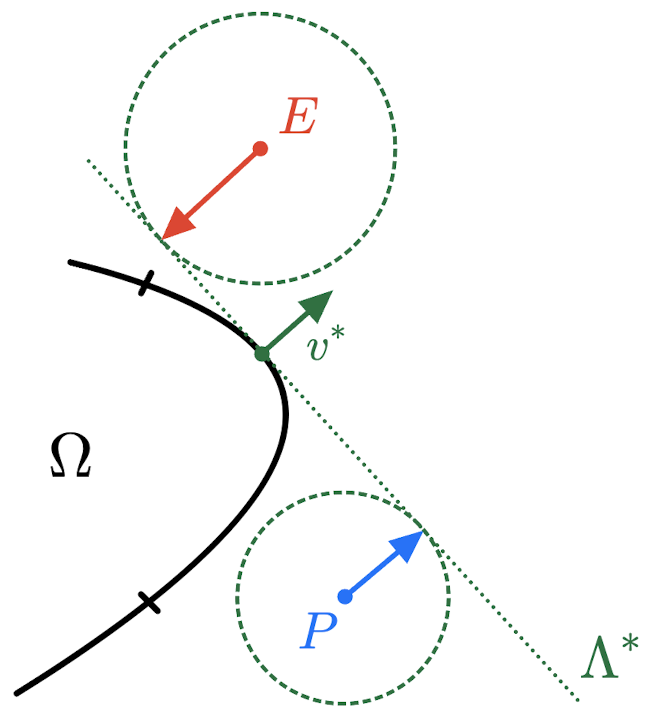}
    \caption{Illustration of the Step 2 in the proof of Proposition \ref{prop: value represen formula}. It represents an end-game situation in which the value is so small that the players cannot reach the boundary of the obstacle. Note that the length of the blue (resp. red) arrow, representing the optimal trajectory of the pursuer (resp. the evader), is proportional to $\gamma_p$ (resp. $\gamma_e$).}
    \label{fig: end-game proof}
\end{figure}

In step 2 of the previous proof, we used the following result (Lemma \ref{lem: max horizons} below), which gives a characterisation of the trajectories maximizing the lower- and upper-horizons for the visibility of $E$ and $P$ respectively, i.e. the vectors maximizing $\hls (t,\cdot)$ and $\hus(t,\cdot)$, when the time $t$ is no greater than $t_0$.
As a by-product, this characterisation is also useful to estimate the maximum of $\hus(t,v)$ and $\hls(t,v)$ in the functions $S(t)$ introduced in Proposition \ref{prop: value represen formula}.
For this purpose, we use the equation for the visibility dynamics from \cite{tsai2004visibility}, 
which for any two smooth trajectories $t\mapsto E(t)$ and $t\mapsto P(t)$ in $\free$, describes the evolution of the visibility horizons $\hl (t)$ and $\hu (t)$ as the solutions to
\begin{equation}
\label{visib dynamics E}
\dot{\mathbf{x}}_E^- (t) = \dfrac{1}{\kappa (\hls (t))} \dfrac{\dot{E}(t) \cdot \bfn (\hls (t))}{|\hl(t) - E(t)|} \bfr (\hls (t))
\end{equation}
and
\begin{equation}
\label{visib dynamics P}
\dot{\mathbf{x}}_P^+ (t) = \dfrac{1}{\kappa (\hus (t))} \dfrac{\dot{P}(t) \cdot \bfn (\hus (t))}{|\hu(t) - P(t)|} \bfr (\hus (t)),
\end{equation}
where $\bfn (\hls (t))$ and $\bfn (\hus (t))$ denote the outer normal vectors to $\partial \obs$ at $\hl(t) = \Sigma (\hls (t))$ and $\hu(t) = \Sigma (\hus(t))$ respectively, $\bfr (\hl (t))$ and $\bfr (\hu (t))$ are the unitary tangent vectors to $\partial\obs$ given by
$$
\bfr (\hls (t)) = \dfrac{\hl (t) - E(t)}{|\hl(t) - E(t)|}
\quad \text{and} \quad
\bfr (\hus (t)) = \dfrac{\hu (t) - P(t)}{|\hu(t) - P(t)|},
$$
and $\kappa (\hls (t))$ and $\kappa (\hus (t))$ denote the curvature of $\partial\obs_L$ at $\hl(t)$ and $\hu(t)$ respectively.

Let us state and prove the following lemma, which gives a characterisation of the trajectories maximizing the visibility horizons for the players.

\begin{lemma}
    \label{lem: max horizons}
    Under the assumptions of Proposition \ref{prop: value represen formula}, for any $t\in (0,t_0]$ there exist a unique $v_P^\ast\in \overline{B(0,1)}$ and a unique $v_E^\ast\in \overline{B(0,1)}$ such that
    $$
    \max_{v\in \overline{B(0,1)}} \hus (t,v) = \hus (t, v_P^\ast)
    \quad \text{and} \quad
    \max_{v\in \overline{B(0,1)}} \hls (t,v) = \hls (t, v_E^\ast).
    $$
    Moreover, it hols that
    \begin{equation}
    \label{opt vectors}
    v_P^\ast = \bfn (\hus (t, v_P^\ast))
    \quad \text{and} \quad
    v_E^\ast = - \bfn (\hls (t, v_E^\ast)),
    \end{equation}
    where
    $\hu (t, v_P^\ast) = \Sigma \left( \hus (t, v_P^\ast) \right)$ and
    $\hl (t, v_E^\ast) = \Sigma \left( \hls (t, v_E^\ast) \right)$ are the upper and lower visibility horizons from 
    $P + t\gamma_p v_P^\ast$ and $E + t\gamma_e v_E^\ast$ respectively, and $\bfn (\hus (t, v_P^\ast))$ and $\bfn (\hls (t, v_E^\ast))$ are the corresponding outer normal vectors to $\partial\obs_L$.
\end{lemma}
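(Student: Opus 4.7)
The strategy is to work directly with the pursuer's problem — maximising $v\mapsto \hus(t,v)$ on $\overline{B(0,1)}$ — since the evader's case is identical after reversing the orientation of $\Sigma$. Existence of a maximiser is immediate from continuity of $\hus(t,\cdot)$ on the compact ball; the work lies entirely in the characterisation and its uniqueness.

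The key computation is the gradient of $v\mapsto\hus(t,v)$. Applying \eqref{visib dynamics P} infinitesimally to the curve $\tau\mapsto P'+\tau w$ and projecting the resulting tangential velocity onto $\Sigma'$ (whose orientation at the pursuer's horizon is aligned with $\bfr$, as forced by Assumption~\ref{assump: initial pos}) yields, after the chain rule at $P'=P+\gamma_p tv$,
\begin{equation*}
\nabla_v\hus(t,v) \;=\; \frac{\gamma_p t}{\kappa(\hus(t,v))\,|\hu(t,v)-(P+\gamma_p tv)|}\,\bfn\bigl(\hus(t,v)\bigr).
\end{equation*}
Under \eqref{t_0 small cond} the scalar coefficient is bounded and strictly positive, so $\nabla_v\hus$ never vanishes on $\overline{B(0,1)}$; any maximiser must therefore lie on the sphere $|v|=1$, and the Lagrange multiplier condition pins it down to $v_P^\ast=\bfn(\hus(t,v_P^\ast))$. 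At the evader's horizon $\bfr$ and $\Sigma'$ are antiparallel, which reverses the overall sign and produces $v_E^\ast=-\bfn(\hls(t,v_E^\ast))$.

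Uniqueness is the main obstacle. Parametrise the sphere by $v(\theta)=(\cos\theta,\sin\theta)$ and let $\phi(\theta)$ denote the angle of $\bfn(\hus(t,v(\theta)))$. The gradient formula, combined with $v'(\theta)=(-\sin\theta,\cos\theta)$ and $\bfn=(\cos\phi,\sin\phi)$, gives $F'(\theta):=\frac{d}{d\theta}\hus(t,v(\theta))=\frac{\gamma_p t}{\kappa(\hus)\,|\hu-(P+\gamma_p tv(\theta))|}\sin(\phi(\theta)-\theta)$. Combining with the Frenet identity $d\phi/d\mathbf{s}=\kappa$ on $\partial\obs_L$ and the chain rule, the function $h(\theta):=\phi(\theta)-\theta$ satisfies
\begin{equation*}
h'(\theta) \;=\; C(\theta)\,\sin\bigl(h(\theta)\bigr) - 1,
\qquad
C(\theta):=\frac{\gamma_p t}{|\hu(t,v(\theta))-(P+\gamma_p tv(\theta))|}.
\end{equation*}
Under \eqref{t_0 small cond} one has $|\hu-P'|\ge \dist(P',\obs)\ge \dist(P,\obs)/2 \ge \gamma_p t$; moreover the tangent length from an exterior point to a strictly convex obstacle strictly exceeds the distance to that obstacle, so $C(\theta)<1$. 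Therefore $h'(\theta)\le C(\theta)-1<0$, meaning $h$ is strictly decreasing and has exactly one zero modulo $2\pi$ on $S^1$, which is the unique maximiser $v_P^\ast$. The evader case follows by the same argument with the reversed orientation.
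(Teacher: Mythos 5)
Your proposal is correct in substance, but it takes a genuinely different route from the paper. The paper's proof is purely geometric: by strict convexity of $\partial\obs_L$, there is a unique line tangent simultaneously to $\partial\obs_L$ and to the reachable disc $\overline{B(P,\gamma_p t)}$ leaving both in the same half-plane (opposite half-planes for the evader); the tangency point identifies $v_P^\ast=\bfn(\hus(t,v_P^\ast))$, and a half-plane/visibility argument shows every other $v\in\overline{B(0,1)}$ yields a strictly smaller horizon, so existence, uniqueness and \eqref{opt vectors} come at once, without ever differentiating the horizon map. You instead differentiate $v\mapsto\hus(t,v)$ via \eqref{visib dynamics P}, push any maximiser to the unit sphere, read off \eqref{opt vectors} from the KKT condition, and obtain uniqueness from the one-dimensional ODE $h'=C\sin h-1$ with $C<1$, the bound $\gamma_p t\le\dist(P',\obs)<|\hu(t,v)-P'|$ coming from \eqref{t_0 small cond} together with the fact that the tangent length from an exterior point exceeds the distance to the obstacle. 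Your route yields quantitative control of the critical points (close in spirit to the estimates the paper defers to Appendix \ref{sec: lemmas boundary}), at the price of assuming smooth dependence of $\hus$ on the vantage point, which the geometric proof never needs. Two steps deserve tightening: strict monotonicity of $h$ alone does not give ``exactly one zero modulo $2\pi$''; you also need that $h$ decreases by exactly $2\pi$ per revolution, which follows because $\phi=\Phi\circ\hus$ (with $\Phi$ the normal-angle function along $\Sigma$) returns exactly to its initial value while $\theta$ advances by $2\pi$, combined with $|h'|\le 1+C<2$ to exclude a drop of $4\pi$. Moreover $F'$ also vanishes where $h\equiv\pi$ (i.e.\ $v=-\bfn$); that point must be ruled out as a maximiser, which your own KKT step does, since there the gradient is a negative multiple of $v$ and points inward, so the unique zero of $h$ is indeed the unique maximiser. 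The evader case then goes through with the sign reversal you indicate.
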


\begin{proof}
\underline{\textit{Step 1: Characterisation of $v_P^\ast$.}}
Let us recall that the reachable set in time $t$ from the initial position $P$ is the closed ball $\overline{B(P, \, \gamma_p t)}$. By the convexity of $\partial \obs_L$, there exists a unique line which is tangent to both $\partial \obs_L$ and $\overline{B(P, \, \gamma_p t)}$, and leaves them in the same half-space.
In other words, there exists a unique $v_P^\ast\in \overline{B(0,1)}$ such that the line
$$
\Lambda_P^+ (t,v_P^\ast) := \{x\in \R^2 \ ; \quad (x - x_0) \cdot v_P^\ast =0 \}, \qquad \text{with} \ x_0 = P + \gamma_p t v_P^\ast
$$
satisfies $\hu (t,v_P^\ast)\in \Lambda_P^+$ and $(x - x_0) \cdot v_P^\ast \leq 0$ for all $x\in \partial\obs_L\cup \overline{B(P, \, \gamma_p t)}$. Moreover, since $\Lambda_P^+ (t,v_P^\ast)$ is tangent to $\partial\obs_L$, it holds that $v_P^\ast = \bfn (\hus (t,v_P^\ast))$.
See Figure \ref{fig: opt traj Lemma} (left) for an illustration.

Now, for any $v\in \overline{B(0,1)}$ with $v\neq v_P^\ast$, the line segment joining $P+\gamma_p t v$ and $\hu (t,v)\in \partial \obs_L$ is contained in the half-space $\{ (x - x_0) \cdot v_P^\ast \leq 0 \}$ and does not intersect the obstacle, which implies that $\hu (t,v)$ is visible from $P + \gamma_p t v_P^\ast$, and hence, $\hus (t,v) < \hus (t,v_P^\ast)$.

\underline{\textit{Step 2: Characterisation of $v_E^\ast$.}}
The proof follows a similar argument. In this case, we consider the unique line which is tangent to both $\partial \obs_L$ and $\overline{B(E, \, \gamma_e t)}$, and leaves them in opposite half-spaces.
In other words, there exists a unique $v_E^\ast\in \overline{B(0,1)}$ such that the line
$$
\Lambda_E^- (t,v_P^\ast) := \{x\in \R^2 \ ; \quad (x - x_0) \cdot v_E^\ast =0 \}, \qquad \text{with} \ x_0 = E + \gamma_e t v_E^\ast
$$
satisfies $\hl (t,v_E^\ast)\in \Lambda_E^-$ and 
$$
\begin{cases}
    (x - x_0) \cdot v_E^\ast \leq 0 & \forall x\in \overline{B(E, \, \gamma_e t)} \\
    (x - x_0) \cdot v_E^\ast \geq 0 & \forall x \in \partial\obs_L.
\end{cases}
$$
Moreover, since $\Lambda_E^- (t,v_E^\ast)$ is tangent to $\partial\obs_L$, it holds that $v_P^\ast = -\bfn (\hus (t,v_P^\ast))$.
See Figure \ref{fig: opt traj Lemma} (right) for an illustration.

Now, for any $v\in \overline{B(0,1)}$ with $v\neq v_E^\ast$, the lower-horizon $\hl (t,v)\in \partial \obs_L$ is contained in the half-space $\{ (x - x_0) \cdot v_P^\ast \geq 0 \}$, and therefore, the line segment joining $E+\gamma_e t v$ and $\hl (t,v)\in \partial \obs_L$ crosses the line $\Lambda_E^- (t,v_E^\ast)$. This implies that $\hl (t,v)$ is not visible from $E + \gamma_e t v_E^\ast$, and hence, $\hls (t,v) < \hls (t,v_P^\ast)$.
\end{proof}

\begin{figure}
    \centering
    \includegraphics[scale=.5]{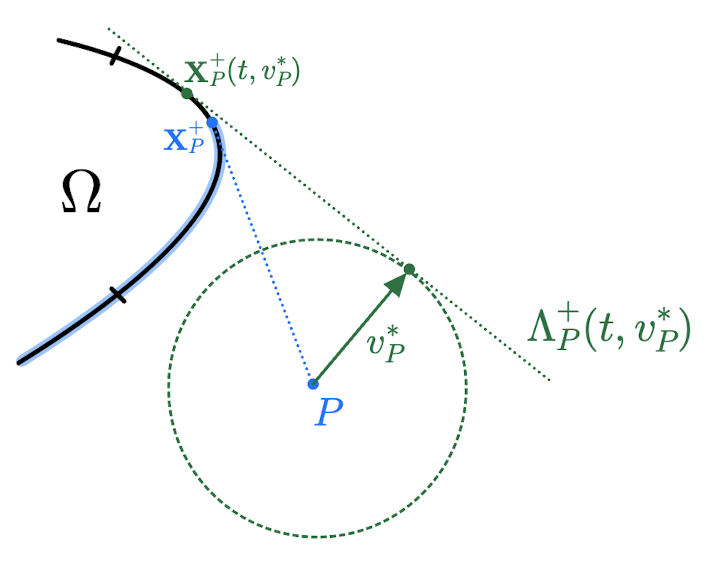}
    \includegraphics[scale=.5]{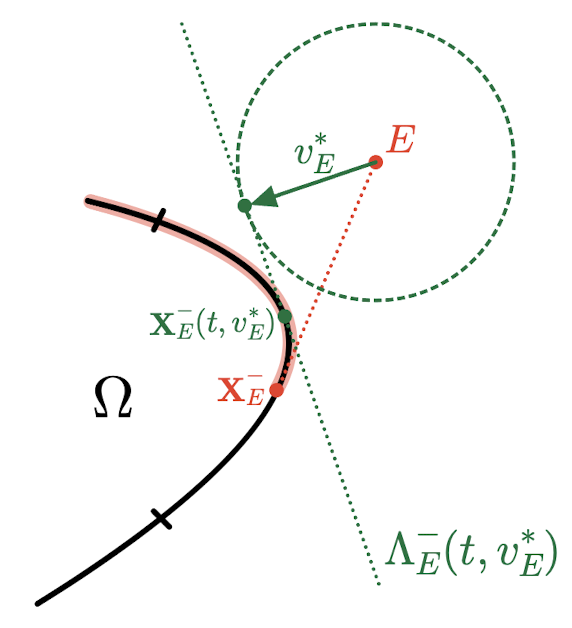}
    \caption{Illustration of the trajectories that maximise the visibility horizon for the pursuer (left) and the evader (right). In the drawing at the left, we see that $v_P^\ast$ has the same direction as the outer normal vector to $\partial\obs_L$ at the horizon point $\hu (t,v_P^\ast)$, whereas in the drawing at the right, we see that $v_E^\ast$ has opposite direction to the outer normal vector to $\partial\obs_L$ at $\hl (t,v_E^\ast)$.}
    \label{fig: opt traj Lemma}
\end{figure}

Note in \eqref{opt vectors} that the optimal directions for the pursuer and for the evader have opposite sign with respect to the outer normal vector to the obstacle. This is due to the fact that, while the pursuer is maximizing the upper-horizon for the visibility, the evader is maximizing the lower-horizon.
This implies in particular that the optimal trajectory for the pursuer moves them away from the obstacle, whereas the optimal trajectory for the evader brings them closer to it (see Lemma \ref{lem: estimate dist} in the appendix).
In Lemma \ref{lem: estimate  s bar} in the appendix, we prove that this different behaviour of $E$ and $P$ yields estimates for the maximum of $\hls(t,v)$ and $\hus (t,v)$ with opposite sign in the second-order term,
which plays an important role when analysing the sign of the function $S(t)$ introduced in Proposition \ref{prop: value represen formula}. 
See Figure \ref{fig: opt traj Lemma} for an illustration of the trajectories maximizing the visibility horizons.

From Lemma \ref{lem: max horizons}, we deduce that, for any $t\in (0,t_0]$ fixed, the trajectories that maximise the lower and upper visibility horizons at time $t$ for the evader and the pursuer respectively are given by
\begin{equation}
\label{opt traj section}
E(\tau) = E + \tau \gamma_e v_E^\ast
\quad \text{and} \quad
P(\tau) = P + \tau \gamma_p v_P^\ast, \quad \text{for}\ \tau\in [0,t],
\end{equation}
where $v_E^\ast$ and $v_P^\ast$ satisfy \eqref{opt vectors}.
We can now plug these trajectories in equations \eqref{visib dynamics E} and \eqref{visib dynamics P} to estimate the position of the horizons $\hl (t, v_P^\ast)$ and $\hu (t, v_E^\ast)$ defined in Lemma \ref{lem: max horizons}.
This yields the desired estimates for the maximum of $\hls (t,v)$ and $\hus (t,v)$ in \eqref{S def}, which are proved in Lemma \ref{lem: estimate  s bar} in the Appendix \ref{sec: lemmas boundary}.

We now have all the necessary ingredients to prove upper and lower estimates for the value of the game $V (E,P)$ when the initial position $(E,P)\in \game$ satisfies the Assumption \ref{assump: initial pos}.
Before stating the result, let us introduce some quantities which are relevant in the estimates of $V(E,P)$.
Given an initial position of the game $(E,P)\in \game$, we define
\begin{equation}
\label{dist to hor def}
d_E := | E-\hl|
\quad \text{and} \quad
d_P := | P- \hu|,
\end{equation}
representing the distance between the vantage point $E$ (resp. $P$) and the lower (resp. upper) horizon for the visibility from $E$ (resp- $P$).
We also define the quantities
\begin{equation}
\label{d star C def}
d^\ast_\kappa (E,P) = \left(\dfrac{\gamma_p}{\kappa (\hus) d_P} - \dfrac{\gamma_e}{\kappa (\hls) d_E}\right)_+
\quad \text{and} \quad
C(E,P) = \dfrac{\gamma_p^2}{2\kappa (\hus)^2 d_P^3} + \dfrac{\gamma_e^2}{2\kappa (\hls)^2 d_E^3},
\end{equation}
and the quantity
\begin{equation}
\label{S(E,P) def}
S(E,P) = \hus - \hls,
\end{equation}
which represents the distance between the visibility horizons in terms of the parametrisation $\Sigma(\cdot)$. Using the notation of Proposition \ref{prop: value represen formula}, we have $S(E,P) = S(0)$, which is positive whenever $(E,P)$ is in the interior of $\game$ and vanishes on the boundary $\boundaryT$.

Note that the quantity $d^\ast_\kappa (E,P)$ is particularly relevant. Indeed, if $(E,P)\in \boundaryT$, then the visibility horizons coincide, i.e. $\bfx^\ast := \hl = \hu$, which implies that
$$
d^\ast_\kappa (E,P) = \dfrac{1}{\kappa (\mathbf{s}^\ast)} \left( \dfrac{\gamma_p}{|P-\bfx^\ast|} - \dfrac{\gamma_e}{|E-\bfx^\ast|}  \right)_+ , \qquad \text{whenever} \ (E,P)\in \boundaryT,
$$
where $\mathbf{s}^\ast :=\Sigma^{-1} (\bfx^\ast)$.
In view of \eqref{non-usable simple case}, we deduce that $(E,P)\not\in \usable$ whenever $d^\ast_\kappa (E,P)>0$.

\begin{theorem}
    \label{thm: value estimates}
    Consider the two-player surveillance-evasion game in a two-dimensional environment with one obstacle and isotropic and homogeneous dynamics \eqref{homogeneous isotropic intro}. Let the initial position of the game $(E,P)\in \game$ satisfy Assumption \ref{assump: initial pos}, and let $t_0>0$ satisfy \eqref{t_0 small cond}.
    Assume that there exists $\varepsilon\in [0,1)$ such that
    $$
    C_L \leq \dfrac{\varepsilon \kappa_0^3 t_0}{2} \, 
    \min \left\{ \dfrac{\gamma_e}{\kappa (\hls)^2 d_E^2}, \, 
    \dfrac{\gamma_p}{\kappa (\hus)^2 d_P^2}\right\}.
    $$
    
    Then, there exists $0<\overline{t}_0 \leq t_0$, depending on $t_0, \gamma_e,\gamma_p, L, C_L$ and $\kappa_0$, such that
    for any $\delta\in (0,1-\varepsilon)$, it holds that
    \begin{equation}
    \label{lower estimate thm}
    V (E,P) \geq \min \left\{ \dfrac{d^\ast_\kappa (E,P)}{(1+\delta+\varepsilon)C(E,P)} , \, \delta \overline{t}_0 \right\},
    \end{equation}
    and
    \begin{equation}
        \label{upper estimate thm}
        V (E,P) \leq \dfrac{\sqrt{S(E,P)} + d^\ast_\kappa (E,P)}{(1-\varepsilon - \delta) C(E,P)} =: t^\ast,
    \end{equation}
    whenever $0 \leq t^\ast \leq \delta \overline{t}_0$ and $\sqrt{S(E,P)} \leq t^\ast$.
    Let us recall the definition of $S(E,P)$ in \eqref{S(E,P) def}. 
\end{theorem}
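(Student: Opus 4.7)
The plan is to use Proposition \ref{prop: value represen formula} to rewrite $V(E,P)$ as the first zero of $S(t)$ on $[0,t_0]$, and then to sandwich $S(t)$ between two explicit quadratics in $t$ whose zeros yield the two inequalities claimed. Lemma \ref{lem: max horizons} identifies the trajectories realising the two maxima in $S(t)$ as the straight lines $E(\tau)=E+\tau\gamma_e v_E^\ast$ and $P(\tau)=P+\tau\gamma_p v_P^\ast$, with $v_P^\ast=\bfn(\hus(t,v_P^\ast))$ pointing \emph{away} from the obstacle and $v_E^\ast=-\bfn(\hls(t,v_E^\ast))$ pointing \emph{towards} the obstacle. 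This asymmetry is essential: the pursuer will increase $d_P(\tau)$ while the evader will decrease $d_E(\tau)$, so the second-order contributions to $\max_v\hus$ and $\max_v\hls$ will appear with opposite signs.

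Along these optimal trajectories, \eqref{visib dynamics E}--\eqref{visib dynamics P} combined with $\dot{P}\cdot\bfn(\hus)=\gamma_p$ and $\dot{E}\cdot\bfn(\hls)=-\gamma_e$ give Taylor expansions
\begin{equation*}
\max_v \hus(t,v) = \hus + \frac{\gamma_p}{\kappa(\hus)d_P}\,t - \frac{\gamma_p^2}{2\kappa(\hus)^2 d_P^3}\,t^2 + R_P(t),
\end{equation*}
\begin{equation*}
\max_v \hls(t,v) = \hls + \frac{\gamma_e}{\kappa(\hls)d_E}\,t + \frac{\gamma_e^2}{2\kappa(\hls)^2 d_E^3}\,t^2 + R_E(t),
\end{equation*}
with remainders $R_E,R_P$ that the two lemmas of Appendix \ref{sec: lemmas boundary} control by quantities of order $\varepsilon\,C(E,P)\,t^2+O(t^3)$ on an interval $[0,\overline{t}_0]$; the parameter $\varepsilon$ enters via the smallness hypothesis on $C_L$ relative to $\kappa_0^3 t_0$, which ensures that $\kappa(\hus(\tau))$ and $\kappa(\hls(\tau))$ stay close to $\kappa(\hus)$ and $\kappa(\hls)$ along the trajectories, and the slack $\delta$ absorbs the cubic remainder. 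Subtracting the two expansions and setting $\alpha:=\gamma_p/(\kappa(\hus)d_P)-\gamma_e/(\kappa(\hls)d_E)$, one obtains on $[0,\overline{t}_0]$ the sandwich
\begin{equation*}
S(E,P)+\alpha\, t - (1+\delta+\varepsilon)\,C(E,P)\, t^2 \;\leq\; S(t) \;\leq\; S(E,P)+\alpha\, t - (1-\delta-\varepsilon)\,C(E,P)\, t^2.
\end{equation*}

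The two bounds then reduce to elementary quadratic analysis. For the lower bound, on the interval $t\leq d^\ast_\kappa/((1+\delta+\varepsilon)C(E,P))$ the left-hand side of the sandwich is $\geq S(E,P)\geq 0$, because $\alpha\, t\geq (1+\delta+\varepsilon)C(E,P)t^2$ there whenever $\alpha=d^\ast_\kappa$; hence the first zero of $S$ cannot occur before this threshold, capped by $\delta\overline{t}_0$ in case the interval where the sandwich holds is exceeded, which is exactly \eqref{lower estimate thm}. For the upper bound, evaluating the right-hand side of the sandwich at $t=t^\ast$ as defined in \eqref{upper estimate thm} gives, when $\alpha=d^\ast_\kappa\geq 0$, the quantity $S(E,P)-t^\ast\sqrt{S(E,P)}$, which is nonpositive exactly under the standing hypothesis $\sqrt{S(E,P)}\leq t^\ast$; the case $\alpha<0$ is even easier because $\alpha\, t^\ast$ is then already negative. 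The main obstacle is purely technical and is deferred to Appendix \ref{sec: lemmas boundary}: one must show that $d_E(\tau)$, $d_P(\tau)$ and the curvature values along the optimal trajectories differ from their initial values by no more than a factor $1+\varepsilon$ on $[0,\overline{t}_0]$, which is precisely what the smallness of $C_L$ relative to $\kappa_0^3 t_0$ encodes and is the reason the restriction on $C_L$ must be removed afterwards by the inner/outer ball comparison argument announced in the sketch preceding the statement.
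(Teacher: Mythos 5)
Your proposal is correct and takes essentially the same route as the paper's own proof: it reduces the theorem to the first zero of $S(t)$ via Proposition \ref{prop: value represen formula}, uses Lemma \ref{lem: max horizons} and the appendix estimates (Lemmas \ref{lem: estimate dist} and \ref{lem: estimate  s bar}) to sandwich $S(t)$ between quadratics with the $C_L$-smallness hypothesis supplying the $\varepsilon$-perturbation and $\delta$ absorbing the cubic remainder on $[0,\delta\overline{t}_0]$, and then concludes both bounds by the same elementary quadratic analysis (including the choice of $t^\ast$ and the role of $\sqrt{S(E,P)}\leq t^\ast$). The only differences are cosmetic, e.g.\ retaining $S(E,P)\geq 0$ in the lower bound and treating the case of negative linear coefficient explicitly.
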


\begin{remark}
\begin{enumerate}
    \item We note that in the case where $\partial \obs_L$ has constant curvature (i.e. $C_L=0$ in Assumption \ref{assump: initial pos}), we are allowed to take $\varepsilon=0$. We shall apply this case in the following theorem where, by means of a comparison argument, we are able to get rid of the upper constraint on the Lipschitz constant $C_L$ of the curvature $\kappa(\cdot)$ function.
    \item For the upper estimate to hold we need $\sqrt{S(E,P)}$ to be smaller than the upper estimate. It does not present any inconvenient for our interests, which are mainly focused on estimating the value near the boundary of the game domain $\game$, and we recall the $S(E,P)$ vanishes precisely on $\boundaryT$. 
\item For both estimates \eqref{lower estimate thm} and \eqref{upper estimate thm} to apply,  we need the quantity $d^\ast_\kappa(E,P)$ to be small enough.
Again, this quantity is continuous and vanishes on the usable part of $\boundaryT$.  Hence,  Theorem \ref{thm: value estimates} provides estimates of the value $V(E,P)$ in a neighbourhood of $\usable$
\end{enumerate}
\end{remark}

\begin{proof}
    \underline{\textit{Step 1: Preparation.}}
    Note that the condition on $C_L$ implies that
    \begin{equation}
        \label{C_L small proof}
        \dfrac{C_L \gamma_e}{\kappa_0^3 d_E t_0} \leq \dfrac{\varepsilon \gamma_e^2}{\kappa(\hls)^2 d_E^3}
        \quad \text{and} \quad
        \dfrac{C_L \gamma_p}{\kappa_0^3 d_P t_0} \leq \dfrac{\varepsilon \gamma_p^2}{\kappa(\hus)^2 d_P^3}.
    \end{equation}
    Let $\overline{t}_0\in (0,t_0]$ be sufficiently small, as per Lemma \ref{lem: estimate  s bar} and satisfying
    \begin{equation}
    \label{t0 bar thm}
    \overline{t}_0 \leq \dfrac{C(E,P)}{C_p + C_e},
    \end{equation}
    where $C_p$ and $C_e$ are the constants from Lemma \ref{lem: estimate  s bar}.

    \underline{\textit{Step 2: Lower bound.}} The estimates in Lemma \ref{lem: estimate  s bar} along with \eqref{C_L small proof} yields
    \begin{eqnarray*}
        \max_{v\in \overline{B(0,1)}} \hls (t,v) &\leq &  \hls (0) + \dfrac{\gamma_e}{\kappa (\hls) d_E} t + \dfrac{(1+\varepsilon) \gamma_e^2}{2\kappa (\hls)^2 d_E^3} t^2 + C_e t^3 \\
        \max_{v\in \overline{B(0,1)}} \hus (t,v) &\geq & \hus (0) + \dfrac{\gamma_p}{\kappa (\hus) d_P} t - \dfrac{(1+\varepsilon) \gamma_p^2}{2\kappa(\hus)^2 d_P^3} t^2 - C_p t^3.
    \end{eqnarray*}
    Recalling that the definition of $S(t)$ in Proposition \ref{prop: value represen formula} and the definitions of $d^\ast_\kappa (E,P)$ and $C(E,P)$ in \eqref{d star C def}, we can write
    \begin{eqnarray*}
    S(t) &\geq & S(0) + d^\ast_\kappa (E,P) t - (1+\varepsilon) C(E,P) t^2 - (C_p+C_e) t^3 \\
    &\geq &  d^\ast_\kappa (E,P)t - \big((1+\varepsilon) C(E,P) + t (C_p+C_e) \big)t^2,
    \qquad \forall t\in [0,\overline{t}_0]
    \end{eqnarray*}
    Now, for any $\delta>0$, one can easily verify, using \eqref{t0 bar thm}, that
    $$
    S(t) \geq  d^\ast_\kappa(E,P)t -  (1+\varepsilon + \delta) C(E,P)t^2, \qquad \forall t\in [0,\, \delta \overline{t}_0],
    $$
    which proves that $S(t)>0$ for all $t>0$ satisfying
    $t < \min\left\{ \frac{d^\ast_\kappa (E,P)}{(1 + \varepsilon + \delta) C(E,P)} , \, \delta \overline{t}_0  \right\},$
    and then \eqref{lower estimate thm} follows from Proposition \ref{prop: value represen formula}.

    \underline{\textit{Step 3: Upper bound.}}
    We use again the estimates in Lemma \ref{lem: estimate  s bar} and \eqref{C_L small proof} to obtain
    \begin{eqnarray*}
        \max_{v\in \overline{B(0,1)}} \hls (t,v) \geq \hls (0) + \dfrac{\gamma_e}{\kappa (\hls) d_E} t + \dfrac{(1-\varepsilon) \gamma_e^2}{2\kappa (\hls)^2 d_E^3} t^2 -C_e^3 \\
        \max_{v\in \overline{B(0,1)}} \hus (t,v) \leq \hus (0) + \dfrac{\gamma_p}{\kappa (\hus) d_P} t - \dfrac{(1-\varepsilon) \gamma_p^2}{2\kappa (\hus)^2 d_P^3} t^2 + C_p t^3.
    \end{eqnarray*}
    Using \eqref{t0 bar thm}, and the definitions of $S(E,P)$, $d^\ast_\kappa (E,P)$ and $C(E,P)$, we have
    \begin{eqnarray*}
    S(t) &\leq & S(E,P) + d^\ast_\kappa (E,P) t - (1-\varepsilon) C(E,P) t^2 + (C_p+C_e)t^3 \\
    &\leq & S(E,P) + t \left( d^\ast_\kappa (E,P) - t (1-\varepsilon - \delta) C(E,P)\right), \qquad \forall t\in [0,\delta\overline{t}_0],
    \end{eqnarray*}
    for any $\delta \in (0,1-\varepsilon)$.
    A sufficient condition for $S(t)$ to be non-positive is to have
    $$
     d^\ast_\kappa (E,P) - t (1-\varepsilon - \delta) C(E,P) = - \sqrt{S(E,P)} \quad \text{and} \quad
    t\geq \sqrt{S(E,P)}.
    $$
    Let us set 
    $$
    t^\ast : = \dfrac{\sqrt{S(E,P)} + d^\ast_\kappa (E,P)}{(1-\varepsilon - \delta) C(E,P)}.
    $$
    Then, it follows from Proposition \ref{prop: value represen formula} that $V (E,P) \leq t^\ast$, provided that $t^\ast \leq \delta \overline{t}_0$ and $t^\ast \geq \sqrt{S(E,P)}$.
\end{proof}

Next, we get rid of the upper-bound condition on the Lipschitz constant $C_L$ by means of a comparison argument, using an inner and outer ball with respect to the parametrised part of the boundary in Assumption \ref{assump: initial pos}.
The idea is to consider the same surveillance-evasion game with a smaller (resp. bigger) obstacle $\tilde{\obs}$ with locally constant curvature, noting that, whenever $\tilde{\obs}\subset \obs$ (resp. $\obs\subset \tilde{\obs}$), then $V (E,P) \leq \tilde{V}_\infty (E,P)$ (resp. $\tilde{V}_\infty (E,P) \leq V (E,P)$) for any $(E,P)\in \game $.
In this case, in which we do not use the different value of the curvature at $\hl$ and $\hu$, the quantities of interest, analogous to \eqref{d star C def}, are given by
$$
d^\ast (E,P) = \left(\dfrac{\gamma_p}{d_P} - \dfrac{\gamma_e}{d_E}\right)_+
\quad \text{and} \quad
C^\ast = \dfrac{2d_P^3 d_E^3}{d_E^3\gamma_p^2 + d_P^3 \gamma_e^2}, 
$$
with $d_E$ and $d_P$ defined as in \eqref{dist to hor def}.

\begin{theorem}
    \label{thm: estimates k const}
    Consider the two-player surveillance-evasion game in a two-dimensional environment with one obstacle and isotropic and homogeneous dynamics\eqref{homogeneous isotropic intro}. Let the initial position of the game $(E,P)\in \game$ satisfy Assumption \ref{assump: initial pos}, and let $t_0>0$ satisfy \eqref{t_0 small cond}.
    Then, there exists $0<\overline{t}_0 \leq t_0,$ depending on $t_0,\gamma_e, \gamma_p, L, C_L$ and $\kappa_0$, such that for any $\delta \in (0,1)$, it holds that
    \begin{equation}
        \label{value lower est k const}
        V (E,P) \geq \min \left\{ \dfrac{\kappa_0}{1+\delta} C^\ast d^\ast (E,P) , \, \delta \overline{t}_0 \right\},
    \end{equation}
    and if we take $\delta\geq \underline{\delta}:= 1- (\kappa_0 + LC_L)^2 C^\ast$, it holds that
    \begin{equation}
        \label{value upper est k const}
        V (E,P) \leq \dfrac{\kappa_0 + L C_L}{1-\delta} C^\ast \left( (\kappa_0 + L C_L) \sqrt{S(E,P)} + d^\ast (E,P) \right) =: t^\ast,
    \end{equation}
    whenever $0 \leq t^\ast\leq \delta \overline{t}_0$.
\end{theorem}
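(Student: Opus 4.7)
The plan is to bootstrap Theorem \ref{thm: value estimates} via a comparison argument with two auxiliary games in which $\partial\obs_L$ is locally replaced by an arc of constant curvature. Since such an arc has $C_L=0$, Theorem \ref{thm: value estimates} applies to each auxiliary problem with $\varepsilon=0$ and no restriction on the original Lipschitz constant of $\kappa(\cdot)$.

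First I would construct the two auxiliary obstacles. Fix a reference point $\bfx^\ast\in\partial\obs_L$, chosen as a common tangent point for the end-game configuration of interest. Let $\tilde\obs_+$ be obtained by replacing $\partial\obs_L$ near $\bfx^\ast$ by an arc of curvature $\kappa_0$, tangent to $\partial\obs$ at $\bfx^\ast$; since $\kappa(\cdot)\geq\kappa_0$ on $[-L,L]$, this flatter arc lies outside the original boundary, so $\obs\subset\tilde\obs_+$ locally, the auxiliary target set contains $\target$, and hence $\tilde V^+_\infty(E,P)\leq V(E,P)$. Symmetrically, let $\tilde\obs_-$ replace the same arc by one of curvature $\kappa_0+LC_L$ (an upper bound for $\kappa$ on the relevant portion, by Lipschitz continuity); this more-curved arc lies inside the original boundary, so $\tilde\obs_-\subset\obs$ locally and $V(E,P)\leq \tilde V^-_\infty(E,P)$.

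Next I would apply Theorem \ref{thm: value estimates} to both auxiliary games with $C_L=0$ and $\varepsilon=0$. For a circular arc of curvature $\kappa$, the quantities in \eqref{d star C def} simplify: $\kappa(\hls)=\kappa(\hus)=\kappa$ gives $d^\ast_\kappa(E,P)=d^\ast(E,P)/\kappa$ and $C(E,P)=1/(\kappa^2 C^\ast)$, so the key ratios become $d^\ast_\kappa/C(E,P)=\kappa\,C^\ast d^\ast(E,P)$ and $1/C(E,P)=\kappa^2 C^\ast$. Substituting $\kappa=\kappa_0$ into \eqref{lower estimate thm} and combining with $\tilde V^+_\infty\leq V$ produces \eqref{value lower est k const}. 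Substituting $\kappa=\kappa_0+LC_L$ into \eqref{upper estimate thm} and combining with $V\leq\tilde V^-_\infty$ produces \eqref{value upper est k const}. A short algebraic check then shows that the auxiliary hypothesis $\sqrt{S(E,P)}\leq t^\ast$ of Theorem \ref{thm: value estimates} reduces, in this setting, to the sharp condition $\delta\geq \underline\delta=1-(\kappa_0+LC_L)^2 C^\ast$.

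The main obstacle in turning this sketch into a rigorous proof is the geometric verification: showing that the inclusions $\obs\subset\tilde\obs_+$ and $\tilde\obs_-\subset\obs$ hold on a neighborhood of $\bfx^\ast$ large enough that all visibility horizons arising in either game during $[0,\overline t_0]$ remain in the modified region, and checking that the distances $d_E,d_P$ entering $C^\ast$ and $d^\ast$ match those used in the auxiliary games up to an error absorbable in the $\delta$-slack. Because $\bfx^\ast$ is a common tangent point of $\partial\obs$ and $\partial\tilde\obs_\pm$, the relevant tangent lines coincide at $\bfx^\ast$, so these distances agree exactly in the end-game limit $(E,P)\to\boundaryT$, and the discrepancy away from the limit is $O(\overline t_0)$, which closes the argument by monotonicity of $V_\infty$ with respect to the obstacle.
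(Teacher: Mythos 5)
Your overall strategy is the paper's own: sandwich the game between two constant-curvature comparison obstacles (curvatures $\kappa_0$ and $\kappa_0+LC_L$), invoke monotonicity of the value under obstacle inclusion (valid here because $V\le t_0$ forces any terminating segment to be tangent to the parametrised arc $\partial\obs_L$), and apply Theorem \ref{thm: value estimates} to the auxiliary games with $C_L=0$, $\varepsilon=0$. Your algebra is also the paper's: for constant curvature $\kappa$ one has $d^\ast_\kappa(E,P)=d^\ast(E,P)/\kappa$ and $C(E,P)=1/(\kappa^2C^\ast)$, which turns \eqref{lower estimate thm} and \eqref{upper estimate thm} into \eqref{value lower est k const} and \eqref{value upper est k const}, and the sufficient condition $\sqrt{S(E,P)}\le t^\ast$ does reduce to $\delta\ge\underline\delta=1-(\kappa_0+LC_L)^2C^\ast$.

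The genuine soft spot is the step you yourself flag at the end. You anchor both comparison arcs at a single ``common tangent point $\bfx^\ast$ of the end-game configuration'' and then argue that the distances entering $d^\ast$ and $C^\ast$ agree only in the limit $(E,P)\to\boundaryT$, with an $O(\overline t_0)$ discrepancy to be ``absorbed in the $\delta$-slack.'' For an interior initial position there is no such common point: the relevant horizons are two distinct points $\hl\neq\hu$ of $\partial\obs_L$, and the asserted inequalities must hold with the exact $d_E=|E-\hl|$, $d_P=|P-\hu|$ and for every fixed $\delta\in(0,1)$ in the lower bound, so there is no free parameter left to absorb an error that does not vanish with $d^\ast(E,P)$ or $S(E,P)$; as written the argument does not close. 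The paper's resolution is to take each comparison ball tangent to $\partial\obs_L$ at $\hu$ (radius $1/\kappa_0$ containing $\partial\obs_L$ for the lower bound, radius $1/(\kappa_0+LC_L)$ contained in the obstacle for the upper bound). Then the upper horizon from $P$ is unchanged, so $\tilde d_P=d_P$ exactly, while $\tilde d_E\ge d_E$ in the first case and $\tilde d_E\le d_E$ in the second; consequently $\big(\gamma_p/\tilde d_P-\gamma_e/\tilde d_E\big)_+$ and the corresponding constant $C^\ast$ compare with $d^\ast(E,P)$ and $C^\ast$ monotonically, in exactly the direction needed for each bound, and no error terms arise at all. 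With that modification your sketch becomes the paper's proof.
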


Let us recall the definition of $S(E,P)$ in \eqref{S(E,P) def},  which by the Lipschitz continuity of the parametrisation of $\obs_L$ satisfies $S(E,P)\leq C |\hu - \hl|$.
This inequality can be used to obtain the estimate in Theorem \ref{thm: smooth obst intro} from the conclusion of Theorem \ref{thm: estimates k const}.

\begin{proof}
    \underline{\textit{Step 1: Lower bound.}} Let us denote by $\tilde{\obs}$ the open ball of radius $1/\kappa_0$ tangent to $\partial\obs_L$ at $\hu$. Let us denote by $\tilde\game$ the game domain associated to the obstacle $\tilde\obs$, and by $\tilde{V}_\infty (E,P)$ the associated value of the game.     
    By \eqref{hyp smooth and convex}, we have $\partial \obs_L \subset \tilde{\obs}$, and then, one can easily deduce that, for any $(E,P)\in \game\cap \tilde{\game}$ satisfying Assumption \ref{assump: initial pos}, if $V (E,P) \leq t_0$, then $V (E,P)\geq \tilde{V}_\infty (E,P)$. Indeed, by the choice of $t_0$ in \eqref{t_0 small cond}, any game trajectory $(E(t), P(t))\in \game$ terminating at some time $t^\ast \leq t_0$ satisfies that the segment joining $E(t^\ast)$ and $P(t^\ast)$ is tangent to $\partial\obs_L$. Hence, since $\partial\obs_L\subset \tilde{\obs}$, the same trajectory with respect to $\tilde\obs$, has to terminate in a time no larger than $t^\ast$.

    Next, if we denote by $\tilde{\mathbf{x}}_E^-$ and $\tilde{\mathbf{x}}_P^+$ the lower and upper visibility horizons from $E$ and $P$ with respect to $\tilde{\obs}$, and by
    $$
    \tilde{d}_E = | \tilde{\mathbf{x}}_E^- - E|
    \quad \text{and} \quad
    \tilde{d}_P = | \tilde{\mathbf{x}}_P^+ - P|
    $$
    the distances from $E$ and $P$ to such horizons, one can easily verify that $\tilde{d}_P = d_P$ (recall that $\tilde \obs$ is tangent to $\partial \obs_L$ at $\hu$) and $\tilde{d}_E \geq d_E$.

    Now, we can apply Theorem \ref{thm: value estimates} with $C_L=0$, i.e. we can take $\varepsilon = 0$. By plugging the constant curvature version ($\kappa (\mathbf{s}) = \kappa_0$) of $d^\ast_\kappa (E,P)$ and $C(E,P)$ in \eqref{d star C def}, we obtain
    $$
    V (E,P) \geq \min \left\{ \dfrac{\kappa_0}{1+\delta} C^\ast\left( \dfrac{\gamma_p}{\tilde{d}_P} - \dfrac{\gamma_e}{\tilde{d}_E} \right)_+ , \, \delta \overline{t}_0 \right\}.
    $$
    The conclusion then follows from  $\tilde{d}_P = d_P$ and $\tilde{d}_E \geq d_E$, which implies
    $$
    \dfrac{\gamma_p}{\tilde{d}_P} - \dfrac{\gamma_e}{\tilde{d}_E} \geq  \dfrac{\gamma_p}{d_P} - \dfrac{\gamma_e}{d_E}  \geq d^\ast (E,P).
    $$

    \underline{\textit{Step 2: Upper bound.}}
    We use a very similar argument, in which $\tilde\obs$ now denotes the open ball of radius $1/(\kappa_0 + L C_L)$ tangent to $\partial\obs_L$ at $\hu$. In this case, since the curvature of the ball $\kappa_0 + L C_L$ is bigger than $\kappa (\mathbf{s})$ for all $\mathbf{s}\in [-L,L]$, we deduce that $\partial\obs_L \subset \R^2\setminus \tilde\obs$, which in turn implies that $V (E,P) \leq \tilde{V}_\infty (E,P)$ whenever $V (E,P) \leq t_0$.

    From Theorem \ref{thm: value estimates} with constant curvature $C_L = 0$ and $\kappa (\mathbf{s}) = \kappa_0 + LC_L$, we deduce that
    $$
    V (E,P) \leq \dfrac{\kappa_0 + L C_L}{1-\delta} C^\ast \left( (\kappa_0 + L C_L) \sqrt{S(E,P)} + \left(\dfrac{\gamma_p}{\tilde{d}_P} - \dfrac{\gamma_e}{\tilde{d}_E}\right)_+ \right) =: \tilde{t}^\ast,
    $$
    whenever $0 \leq \tilde{t}^\ast\leq \delta \overline{t}_0$ and $\sqrt{S(E,P)} \leq \tilde{t}^\ast$.
    The conclusion follows by checking that $\tilde{d}_P = d_P$ and $\tilde{d}_E \leq d_E$, which implies
    $$
    \left(\dfrac{\gamma_p}{\tilde{d}_P} - \dfrac{\gamma_e}{\tilde{d}_E}\right)_+ \leq  \left(\dfrac{\gamma_p}{d_P} - \dfrac{\gamma_e}{d_E}\right)_+  = d^\ast (E,P).
    $$
    Note also that, since $\tilde{t}^\ast \leq t^\ast$, then $t^\ast \leq \delta \overline{t}_0$ implies $\tilde{t}^\ast\leq \delta \overline{t}_0$.
    Moreover, the choice $\delta\geq \underline{\delta}$ ensures $\sqrt{S(E,P)} \leq \tilde{t}^\ast$.
\end{proof}

We can now use the estimates of Theorem \ref{thm: estimates k const} to obtain estaimates of the profile of the value on the boundary of $\target$ (see Remark \ref{rmk: boundary profile} in Section \ref{sec: main results}).

\begin{corollary}
    \label{cor: boundary estimates smooth}
    Consider the two-player surveillance-evasion game in a two-dimensional environment with one obstacle and homogeneous dynamics. Let $(E,P)\in \boundaryT \setminus \usable$ with $d_E,d_P > \underline{d}>0$, and assume that the segment $[E,P]$ is tangent to $\obs$ on a single point $\bfx^\ast$ satisfying \eqref{assum boundary smooth intro}. 
    
    Then, there exist two constants $d_0 >0$, depending on $\gamma_e,\gamma_p, \kappa_0, \underline{d}, r$ and $\lambda_r$, such that,
    if $d^\ast (E,P)< d_0$, then for any sequence $\{(E_n,P_n)\}_{n\geq 1}\in \game$ converging to $(E,P)$, it holds that
    $$
    \liminf_{n\to \infty} V (E_n, P_n) \geq 
    \dfrac{\kappa_0}{1+\delta} C^\ast d^\ast (E,P)
    $$
    and
    $$
    \limsup_{n\to \infty} V (E_n, P_n) \leq 
    \dfrac{\kappa_0 + r\lambda_r}{1-\delta} C^\ast d^\ast (E,P),
    $$
    where $C^\ast := \dfrac{2d_E^3 d_P^3}{d_E^3\gamma_p^2 + d_P^3 \gamma_e^2}$
    and 
    $\delta = \max \{ \underline{\delta}, 1/2\}$, with $\underline{\delta}:= 1-(\kappa_0+LC_L)^2 C^\ast$.
\end{corollary}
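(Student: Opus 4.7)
The plan is to obtain the corollary by applying Theorem \ref{thm: estimates k const} to a sequence $(E_n,P_n) \to (E,P)$ and passing to the limit. The key observation is that the upper bound of Theorem \ref{thm: estimates k const} has two terms, one proportional to $\sqrt{S(E,P)}$ and one proportional to $d^\ast(E,P)$; the former vanishes when $(E,P) \in \boundaryT$ because the two visibility horizons then coincide at $\bfx^\ast$, so in the limit only the $d^\ast$ term survives and matches the lower bound up to the constants $\kappa_0$ versus $\kappa_0 + r\lambda_r$.

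First I would set up the geometric framework. Using hypothesis \eqref{assum boundary smooth intro}, the portion $\partial\obs \cap B(\bfx^\ast,r)$ is smooth, strictly convex with curvature bounded below by $\kappa_0$ and Lipschitz curvature with constant $\lambda_r$. I can choose an arc-length parametrisation $\Sigma:[-L,L]\to\partial\obs_L$ whose image contains $\partial\obs\cap B(\bfx^\ast,r/2)$, with $L\le r$ and Lipschitz constant $C_L\le\lambda_r$, so that $LC_L \le r\lambda_r$. Since $d_E,d_P>\underline{d}>0$ and the segment $[E,P]$ is tangent to $\obs$ at the single point $\bfx^\ast$, for $(E_n,P_n)$ close enough to $(E,P)$ the horizons $\mathbf{x}_{E_n}^-,\mathbf{x}_{P_n}^+$ both lie in $\partial\obs\cap B(\bfx^\ast,r/2)$, and therefore Assumption \ref{assump: initial pos} holds for the whole sequence eventually. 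Fix a $t_0$ satisfying \eqref{t_0 small cond} uniformly in $n$, and let $\overline{t}_0$ be the corresponding constant from Theorem \ref{thm: estimates k const}.

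Next I would use continuity of the relevant quantities: $d_{E_n}\to d_E$, $d_{P_n}\to d_P$, hence $C^\ast_n \to C^\ast$ and $d^\ast(E_n,P_n) \to d^\ast(E,P)$. Crucially, since both horizons converge to the same point $\bfx^\ast$, we have $S(E_n,P_n) = \mathbf{s}_{P_n}^+ - \mathbf{s}_{E_n}^- \to 0$. Choose $d_0 := \dfrac{(1+\delta)\delta\overline{t}_0}{\kappa_0\sup_n C^\ast_n}$ (or any such explicit bound) to guarantee that, whenever $d^\ast(E,P)<d_0$, the quantity $\frac{\kappa_0}{1+\delta}C^\ast_n d^\ast(E_n,P_n)$ stays below $\delta \overline{t}_0$ for large $n$, and similarly that $t^\ast_n$ from \eqref{value upper est k const} stays below $\delta\overline{t}_0$ once $S(E_n,P_n)$ is sufficiently small.

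Finally, apply the two estimates of Theorem \ref{thm: estimates k const}. From \eqref{value lower est k const},
\[
V(E_n,P_n) \ge \min\!\left\{\frac{\kappa_0}{1+\delta}\,C^\ast_n\, d^\ast(E_n,P_n),\ \delta\overline{t}_0\right\};
\]
taking $\liminf_{n\to\infty}$ and using the smallness of $d^\ast(E,P)$ forces the minimum onto the first argument and gives the desired lower estimate. From \eqref{value upper est k const},
\[
V(E_n,P_n) \le \frac{\kappa_0+LC_L}{1-\delta}\,C^\ast_n\Bigl((\kappa_0+LC_L)\sqrt{S(E_n,P_n)} + d^\ast(E_n,P_n)\Bigr);
\]
since $S(E_n,P_n)\to 0$ and $LC_L\le r\lambda_r$, taking $\limsup$ yields the upper estimate. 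The main technical obstacle is book-keeping the validity conditions of Theorem \ref{thm: estimates k const} uniformly along the sequence—specifically, that Assumption \ref{assump: initial pos} holds eventually, that $t^\ast_n \le \delta\overline{t}_0$ for large $n$, and that $\delta\ge\underline\delta$—but each of these reduces to the single smallness requirement $d^\ast(E,P)<d_0$ together with continuity, justifying the choice $\delta=\max\{\underline\delta,1/2\}$.
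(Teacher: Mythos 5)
Your proposal is correct and follows essentially the same route as the paper's proof: verify that Assumption \ref{assump: initial pos} and the smallness condition \eqref{t_0 small cond} hold uniformly along the tail of the sequence, apply Theorem \ref{thm: estimates k const} with $\delta=\max\{\underline{\delta},1/2\}$, use $S(E_n,P_n)\to 0$, $d^\ast(E_n,P_n)\to d^\ast(E,P)$, $C^\ast_n\to C^\ast$ together with the smallness requirement $d^\ast(E,P)<d_0$ to keep the validity constraints ($t^\ast_n\le\delta\overline{t}_0$, the min landing on the first argument), and pass to the limit, with $LC_L\le r\lambda_r$ converting the constants. The only cosmetic caveat is that $d_0$ should be expressed through the limiting $C^\ast$ (or a bound for it) rather than $\sup_n C^\ast_n$, exactly as in the paper's condition, but this does not affect the argument.
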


\begin{proof}
    We note that, by the assumption \eqref{assum boundary smooth intro}, and since the sequence $(E_n,P_n)$ converges to $(E,P)$, there exists a sufficiently large $n_0\geq 1$ such that the initial position of the game $(E_n, P_n)\in \game$ satisfies the Assumption \ref{assump: initial pos} for all $n\geq n_0$, with $L>0$ fixed and so small that $\partial\obs_L \subset \overline{B} (\bfx^\ast, r)$. Then, $t_0>0$ satisfying \eqref{t_0 small cond} can be chosen in terms of $\gamma_e,\gamma_p, \underline{d}$ and $r$.
    Note that $\overline{t}_0$ in Theorem \ref{thm: estimates k const} can be chosen independently of $n$.

    For any $n\geq 1$, let us set
    $$
    C_n^\ast = \dfrac{2 d_{P_n}^3 d_{E_n}^3}{d_{E_n}^3 \gamma_p^2 + d_{P_n}^3 \gamma_e^2}. 
    $$
    We can now apply Theorem \ref{thm: estimates k const} to deduce that, for all $n\geq n_0$ and $\delta\in (0,1)$, we have
    \begin{equation}
    \label{ineq1}
    V (E_n,P_n) \geq \dfrac{\kappa_0}{1+\delta} C^\ast_n d^\ast (E_n,P_n)
    \end{equation}
    and
    \begin{equation}
    \label{ineq2}
    V (E_n,P_n) \leq \dfrac{\kappa_0 + LC_L}{1-\delta} C^\ast_n \left( (\kappa_0 + LC_L)\sqrt{S(E_n,P_n)} + d^\ast (E_n,P_n) \right) =: t^\ast_n,
    \end{equation}
    provided $t^\ast_n \leq \delta \overline{t}_0$ and $\delta\geq \underline{\delta}:= 1-(\kappa_0 + L C_L)^2C^\ast$.

    Since $(E_n,P_n)$ converges to $(E,P)\in \boundaryT$, it follows that the difference between the visibility horizons, represented by $S(E_n,P_n)$ converges to $0$ as $n\to \infty$.
    We also have that $d^\ast(E_n,P_n)\to d^\ast(E,P)$ and $C_n^\ast \to C^\ast >0$. We can then choose $\delta^\ast = \max \left\{ \underline{\delta}, 1/2\right\}$,
     and we obtain that for $n\geq n_0$, with $n_0$ large enough, the inequalities \eqref{ineq1} and \eqref{ineq2} hold true provided
    $$
    d^\ast (E,P) <  \dfrac{(1-\delta)\delta \overline{t}_0}{(\kappa_0 + LC_L) C^\ast}.
    $$
    The conclusion the follows by taking the limit as $n\to\infty$ in \eqref{ineq1} and \eqref{ineq2}. Note that the constants $L$ and $C_L$ from Assumption \ref{assump: initial pos} can be controlled by the constants $r$ and $\lambda_r$ from condition \eqref{assum boundary smooth intro}.
    \end{proof}

Next we prove that, when $\kappa_0$ is sufficiently large, we can obtain a sharper estimate of the boundary profile. Namely, we can compute explicitly the first-order term of the profile of the value in the transition between the usable and the nonusable part of $\boundaryT$.
The key is that when $\kappa_0$ is sufficiently big, we are allowed to take any $\delta\in (0,1)$ in Theorem \ref{thm: value estimates}.

\begin{corollary}
    \label{cor: boundary estimates smooth sharp}
    Consider the two-player surveillance-evasion game in a two-dimensional environment with one obstacle and homogeneous dynamics. Let $(E,P)\in \boundaryT \setminus \usable$ with $d_E,d_P > \underline{d}>0$, and assume that the segment $[E,P]$ is tangent to $\obs$ on a single point $\bfx^\ast$ satisfying \eqref{assum boundary smooth intro}. 
    Assume moreover that $\kappa_0^2 \geq 1/C^\ast$, where $C^\ast$ is the same as in Corollary \ref{cor: boundary estimates smooth}.
    
    Then, there exist two constants $d_0, C_0 >0$, depending on $\gamma_e,\gamma_p, \kappa_0, \underline{d}, r$ and $\lambda_r$, such that,
    if $d^\ast (E,P)\leq d_0$, then for any sequence $\{(E_n,P_n)\}_{n\geq 1}\in \game$ converging to $(E,P)$, it holds that
    $$
    \liminf_{n\to \infty} V (E_n, P_n) \geq 
    \kappa_0 C^\ast d^\ast (E,P) - C_0 d^\ast (E,P)^2
    $$
    and
    $$
    \limsup_{n\to \infty} V (E_n, P_n) \leq 
    (\kappa_0 + r\lambda_r) C^\ast d^\ast (E,P) + C_0 d^\ast (E,P)^2.
    $$
\end{corollary}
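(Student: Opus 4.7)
The key observation is that the condition $\kappa_0^2 \geq 1/C^{\ast}$ makes the threshold
$\underline{\delta} := 1 - (\kappa_0 + LC_L)^2 C^{\ast}$ appearing in Theorem \ref{thm: estimates k const} non-positive (since $LC_L \geq 0$). Consequently the upper estimate of Theorem \ref{thm: estimates k const} becomes available for \emph{any} $\delta \in (0,1)$, not only for $\delta \geq \underline{\delta}$. This frees $\delta$ as a parameter that we will send to zero along with $d^{\ast}(E,P)$, so as to extract the leading-order term in each bound.

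Following the strategy of Corollary \ref{cor: boundary estimates smooth}, I would fix a sequence $(E_n,P_n) \to (E,P)$ and $L>0$ small enough that $\partial\obs_L \subset \overline{B}(\bfx^{\ast},r)$, choose $t_0$ and $\overline{t}_0$ as in Theorem \ref{thm: estimates k const} independently of $n$, and apply the theorem to each $(E_n,P_n)$. Since $d_{E_n} \to d_E$, $d_{P_n} \to d_P$, $C_n^{\ast} \to C^{\ast}$, $d^{\ast}(E_n,P_n) \to d^{\ast}(E,P)$ and $S(E_n,P_n) \to 0$, passing to the limit yields, for every fixed $\delta \in (0,1)$,
\begin{equation*}
\frac{\kappa_0}{1+\delta}\, C^{\ast} d^{\ast}(E,P) \leq \liminf_n V(E_n,P_n) \leq \limsup_n V(E_n,P_n) \leq \frac{\kappa_0 + L C_L}{1-\delta}\, C^{\ast} d^{\ast}(E,P),
\end{equation*}
provided the constraint $t^{\ast} \leq \delta \overline{t}_0$ from Theorem \ref{thm: estimates k const} holds in the limit, which amounts to $(\kappa_0 + LC_L) C^{\ast} d^{\ast}(E,P) \leq (1-\delta)\delta \overline{t}_0$.

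Now I would optimise by letting $\delta$ depend on $d^{\ast}(E,P)$: set $\delta = \beta\, d^{\ast}(E,P)$ for a constant $\beta$ chosen so that $\beta \overline{t}_0 \geq 2(\kappa_0 + LC_L)C^{\ast}$, which guarantees the constraint above whenever $d^{\ast}(E,P)$ is below some threshold $d_0$. The Taylor expansions
\begin{equation*}
\frac{\kappa_0}{1+\delta} = \kappa_0 - \kappa_0 \delta + O(\delta^2), \qquad
\frac{\kappa_0 + LC_L}{1-\delta} = (\kappa_0 + LC_L) + (\kappa_0 + LC_L)\delta + O(\delta^2),
\end{equation*}
substituted into the bracketing inequality above, give
\begin{equation*}
\kappa_0 C^{\ast} d^{\ast}(E,P) - C_0 d^{\ast}(E,P)^2 \leq \liminf_n V(E_n,P_n),
\end{equation*}
and the symmetric upper bound with $\kappa_0$ replaced by $\kappa_0 + LC_L$, where $C_0$ absorbs $\beta\kappa_0 C^{\ast}$, $\beta(\kappa_0+LC_L)C^{\ast}$ and the $O(\delta^2)$ remainders. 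Finally, one bounds $L \leq r$ and $C_L \leq \lambda_r$ (consequence of \eqref{assum boundary smooth intro}) to replace $\kappa_0 + LC_L$ by $\kappa_0 + r\lambda_r$, concluding the proof.

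The main technical point is the verification of the constraint $t^{\ast} \leq \delta \overline{t}_0$ along the sequence, which is where the hypothesis $\kappa_0^2 \geq 1/C^{\ast}$ is crucial: without it we would be forced to take $\delta \geq \underline{\delta} > 0$, and the $\delta$-dependent factors $1/(1\pm\delta)$ would contribute an $O(d^{\ast})$ correction rather than the $O((d^{\ast})^2)$ correction claimed. Everything else is a routine Taylor expansion and a limit argument identical in spirit to the proof of Corollary \ref{cor: boundary estimates smooth}.
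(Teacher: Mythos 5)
Your proposal is correct and follows essentially the same route as the paper: exploit $\kappa_0^2 \geq 1/C^\ast$ to make $\underline{\delta}\leq 0$, apply Theorem \ref{thm: estimates k const} along the sequence, take $\delta$ proportional to $d^\ast(E,P)$ (the paper fixes it implicitly via $d^\ast(E,P)=\tfrac{(1-\delta^\ast)\delta^\ast\overline{t}_0}{2(\kappa_0+LC_L)C^\ast}$, you set $\delta=\beta d^\ast(E,P)$, which is the same up to constants), expand $1/(1\pm\delta)$ to produce the quadratic correction, and control $L,C_L$ by $r,\lambda_r$. The verification that the choice of $\delta$ satisfies $t^\ast\leq\delta\overline{t}_0$ for small $d^\ast(E,P)$ matches the paper's argument, so no changes are needed.
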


\begin{proof}
The proof is basically the same as the one for Corollary \ref{cor: boundary estimates smooth} with a different choice of $\delta$. Under the assumption $\kappa_0^2 \geq 1/C^\ast$, we have $\underline{\delta}\leq 0$ in Theorem \ref{thm: estimates k const}, and therefore, the inequalities \eqref{ineq1} and \eqref{ineq2} hold for any choice of $\delta\in (0,1)$ such that $t_n^\ast \leq \delta \overline{t}_0$.
Whence, we can choose $\delta^\ast \in (0,1/2]$ such that
\begin{equation}
\label{choice delta}
d^\ast (E,P) = \dfrac{(1-\delta^\ast)\delta^\ast \overline{t}_0}{2(\kappa_0 + LC_L)C^\ast},
\end{equation}
which ensures that \eqref{ineq1} and \eqref{ineq2} hold for $n$ large enough.
Note that this choice of $\delta^\ast$ can always be done, provided
$$
d^\ast (E,P) < \dfrac{\delta^\ast \overline{t}_0}{8(\kappa_0 + LC_L)C^\ast}.
$$
We conclude the proof by noting that, since $\delta_n^\ast\in (0,1/2]$ we have
$$
\dfrac{1}{1+\delta^\ast} \geq 1- \delta^\ast
\quad \text{and} \quad 
\dfrac{1}{1- \delta^\ast} \leq 1+ 4\delta^\ast.
$$
It only remains to estimate $\delta^\ast$ from above by $C d^\ast (E,P)$, for some constant $C>0$. Using the choice of $\delta^\ast$ in \eqref{choice delta} and $1-\delta_n^\ast \geq 1/2$, we have
$$
\delta^\ast = \dfrac{2(\kappa_0+LC_L)C^\ast}{(1-\delta^\ast)\overline{t}_0} d^\ast (E,P)
\leq \dfrac{4(\kappa_0+LC_L)C^\ast}{\overline{t}_0} d^\ast (E,P).
$$
Note that the constants $L$ and $C_L$ from Assumption \ref{assump: initial pos} can be controlled by the constants $r$ and $\lambda_r$ from condition \eqref{assum boundary smooth intro}.
\end{proof}

\section{Boundary estimates near a corner}
\label{sec: boundary estimates corner}

Here, in the same framework as in the previous section, we analyse the behaviour of the value function near the usable part of the boundary $\usable$.
Again, we consider the case in which the line segment joining $E$ and $P$ is tangent to $\obs$ at a single point, denoted by $\bfx^\ast\in \partial\obs$.
However, we consider this time the case when the boundary of the obstacle $\partial\obs$ is not differentiable at $\bfx^\ast$, but instead, $\bfx^\ast$ is a corner of the obstacle (see condition \ref{assum horizon corner intro}).
As we can see from Theorems \ref{thm: corner obst intro} and \ref{thm: semipermeable barrier} and Remark \ref{rmk: jump discontinuity corner}, the behaviour differs from the one obtained in the previous section, not only in terms of the boundary profile (how the value transitions from being zero on $\usable$ to being positive on $\boundaryT\setminus \usable$), but also at the level of the structure of the discontinuity set near the interface between the usable and the non-usable part of $\boundaryT$.

The proof of Theorem \ref{thm: corner obst intro} follows a similar idea to that of Theorem \ref{thm: smooth obst intro} in the previous section.
However, one can no longer use the dynamics of the horizon points $\hu (t)$ and $\hl(t)$, since in this case, these are constant and equal to the corner point $\bfx^\ast$. 
Therefore, it is useful to assume, without loss of generality that $\bfx^\ast$ is the origin, and formulate the problem in polar coordinates.

We can write the position of the game $(E, P)$ in polar coordinates as
\begin{equation}
\label{E P polar coord}
E = d_E \left( \cos \theta_E, \sin \theta_E \right)
\qquad \text{and} \qquad
P = d_P \left( \cos \theta_P, \sin \theta_P \right),
\end{equation}
for some $d_E,d_P>0$ and $\theta_E,\theta_P\in \R$.
Note that $d_E$ and $d_P$ represent, precisely, the distance from $E$ and $P$ to the corner point $\bfx^\ast$, assuming that $\bfx^\ast$ is the origin.
In the sequel, we shall consider the following assumption on the initial condition $(E,P)\in \game$.

\begin{assumption}[Initial position of the game]
    \label{assum ini cond corner}
    We assume that there exist $-\frac{\pi}{2} \leq\theta_1 < \theta_2 < \frac{\pi}{2}$ and $r>0$ such that \eqref{assum horizon corner intro} holds.
    We also assume that the initial position of the game $(E,P)\in \game$ is such that the lower-horizon for the visibility from $E$ and the upper-horizon for the visibility from $P$ are both equal to the origin.
    It can be expressed, using polar coordinates as in \eqref{E P polar coord}, by assuming that
    $$
    \theta_1-\dfrac{\pi}{2} < \theta_E < \theta_2 - \dfrac{\pi}{2}
    \quad
    \theta_2-\dfrac{\pi}{2} < \theta_P < \theta_2 + \dfrac{\pi}{2},
    \text{and} \quad
    \theta_E - \theta_P < \pi.
    $$
    We also assume that both players are at positive distance from the corner, i.e. $\exists \underline{d}$ such that $\min \{d_E, d_P\} > \underline{d}$.
    Not that the assumption $\theta_E - \theta_P < \pi$ implies that $(E,P)$ is in the interior of the game domain $\game= \free^2\setminus \target$.
    See Figure \ref{fig:enter-label}.
\end{assumption}

\begin{figure}
    \centering
    \includegraphics[scale=.45]{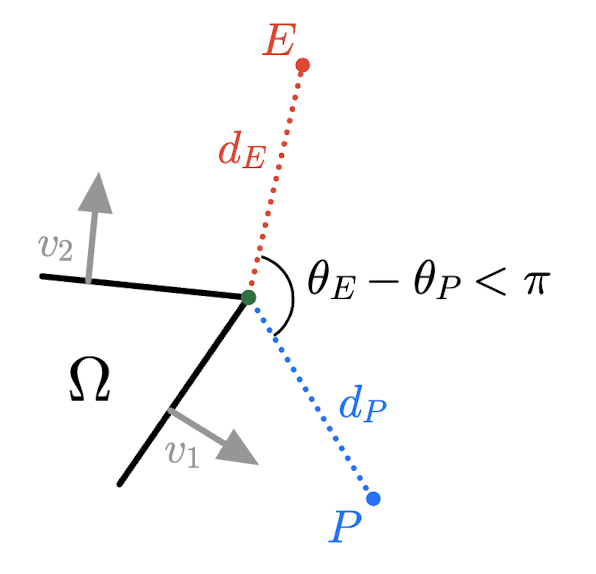}
    \caption{Illustration of the assumption about the initial position of the game described in Assumption \ref{assum ini cond corner}.}
    \label{fig:enter-label}
\end{figure}

Given an initial position of the game $(E,P)\in \game$ satisfying Assumption \ref{assum ini cond corner}, let us recall that, since we assume isotropic and homogeneous dynamics \eqref{homogeneous isotropic intro}, the reachable set in time $t>0$ for $E$ and $P$ are the closed balls $\overline{B(E,\gamma_e t)}$ and $\overline{B(P, \gamma_p t)}$ respectively.
Similarly to the smallness condition for $t_0$ in \eqref{t_0 small cond}, we can assume this time that
\begin{equation}
    \label{t_0 small cond corner}
    \begin{array}{c}
    t_0 \leq  \min \left\{ \dfrac{\operatorname{dist} (E, \obs)}{2\gamma_e},
    \, \dfrac{\operatorname{dist} (P, \obs)}{2\gamma_p}
    \right\}   \\
    \noalign{\vspace{6pt}}
    \hl (t_0,v) = \hu (t_0,v)= \bfx^\ast, \qquad \forall v\in \overline{B(0,1)},
\end{array}
\end{equation}
where $\hl (t_0,v)$ and $\hu (t_0,v)$ denote the upper and lower visibility horizons from $E+\gamma_e t_0 v$ and $P+\gamma_p t_0 v$ respectively.
This condition ensures that the players do not have enough time to reach the obstacle, nor to move their visibility horizons from $\bfx^\ast$.

Under this smallness condition on $t_0$, given a game trajectory $(E(t),P(t))$ in a time interval $(0,t_0]$, it is not difficult to prove that the game ends in the interval $(0,t_0]$ if and only if there exists $\hat{t}\in (0,t_0]$ such that
$$
\theta_E(\hat{t}) - \theta_P (\hat{t}) = \pi,
$$
where $\theta_E(t)$ and $\theta_P(t)$ denote the angular coordinates in $[-\pi,\pi]$ of the trajectories $E(t)$ and $P(t)$ respectively.
Note that, by the assumption \eqref{t_0 small cond corner}, these coordinates are confined in the interval $\left(\theta_1-\frac{\pi}{2}, \theta_2 + \frac{\pi}{2}\right)$, and therefore, there is no ambiguity.  

Next we state the proposition analogous to Proposition \ref{prop: value represen formula}, which provides a sufficient condition for the value of the game, along with a representation formula when this condition is fulfilled.

\begin{proposition}
    \label{prop: represen form corner}
    Consider the two-player surveillance-evasion game in a two dimensional environment with one obstacle and isotropic and homogeneous dynamics \eqref{homogeneous isotropic intro}. Let the initial condition of the game $(E,P)\in \game$ satisfy Assumption \ref{assum ini cond corner}, and let $t_0>0$ satisfy \eqref{t_0 small cond corner}.
    For any $t\in [0,t_0]$, define the function
    $$
    S(t) = \max_{v\in \overline{B(0,1)}} \theta_E (t, v) - \max_{v\in \overline{B(0,1)}} \theta_P (t, v),
    $$
    where $\theta_E (t, v)$ and $\theta_P (t, v)$ denote the angular coordinates in $[-\pi,\pi]$ of $E+\gamma_e t v$ and $P+\gamma_p tv$.

    Then it holds that
    \begin{enumerate}
        \item $V (E,P) \leq t_0$ if and only if $S(t)=\pi$ for some $t\in [0,t_0].$
        \item If $V (E,P)\leq t_0$, then
        $$
        V (E,P) = \min \{ t\geq 0, \quad \text{s.t.} \quad S(t) = \pi\}.
        $$
    \end{enumerate}
\end{proposition}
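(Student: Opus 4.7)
The plan is to translate the proof of Proposition \ref{prop: value represen formula} into polar coordinates. The arc-length difference $\hus - \hls$ is replaced by the angular difference $\theta_E - \theta_P$, and the termination condition becomes $\theta_E - \theta_P = \pi$ (the line of sight passes through the corner) rather than $\hus = \hls$. Condition \eqref{t_0 small cond corner} plays the role of \eqref{t_0 small cond}: it freezes both visibility horizons at the origin throughout $[0, t_0]$, so termination of the game within this window is entirely controlled by the angular coordinates of the players.

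For the sufficient direction, I would suppose $S(t_\star) = \pi$ for some $t_\star \in (0, t_0]$, let $v_E^\star \in \overline{B(0,1)}$ attain $\max_v \theta_E(t_\star, v)$, and give the evader the constant non-anticipating strategy $\alpha[b] \equiv v_E^\star$. By isotropic homogeneous dynamics, every admissible pursuer control yields $P(t_\star) \in \overline{B(P, \gamma_p t_\star)}$, hence $\theta_P(t_\star) \leq \max_v \theta_P(t_\star, v)$, and therefore $\theta_E(t_\star) - \theta_P(t_\star) \geq S(t_\star) = \pi$. Since $\theta_E(0) - \theta_P(0) < \pi$ and the angular coordinates evolve continuously within $(\theta_1 - \pi/2, \theta_2 + \pi/2)$ by \eqref{t_0 small cond corner}, the intermediate value theorem provides some $\tau \in (0, t_\star]$ with $\theta_E(\tau) - \theta_P(\tau) = \pi$, i.e.\ $(E(\tau), P(\tau)) \in \boundaryT$, whence $V(E, P) \leq t_\star$. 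Conversely, if $V(E,P) = t^\star \in (0, t_0]$, then \eqref{t_0 small cond corner} rules out contact with $\partial\obs$ on $[0, t_0]$, so Pontryagin's maximum principle applies to the unconstrained dynamics \eqref{homogeneous isotropic intro} and forces the optimal trajectories to be straight segments $E(\tau) = E + \gamma_e \tau v_E^\star$ and $P(\tau) = P + \gamma_p \tau v_P^\star$. The terminal condition $(E(t^\star), P(t^\star)) \in \boundaryT$ combined with \eqref{t_0 small cond corner} forces the segment to pass through the corner, so $\theta_E(t^\star) - \theta_P(t^\star) = \pi$; the PMP transversality and maximality conditions then identify $v_E^\star$ (resp.\ $v_P^\star$) as the unique maximizer of $\theta_E(t^\star, \cdot)$ (resp.\ $\theta_P(t^\star, \cdot)$). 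Hence $S(t^\star) = \pi$, and the second statement of the proposition follows because any earlier time with $S = \pi$ would, through the sufficient direction, contradict the minimality of $V(E,P)$.

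The main obstacle is the analogue of Lemma \ref{lem: max horizons}. In the smooth case, uniqueness and the closed form of the optimal directions used strict convexity of $\partial\obs_L$ and the outer normal at the horizon; here all horizons collapse to the single corner point at the origin, so that ingredient must be replaced by an elementary planar-geometry observation. Namely, for $t > 0$ the unique maximizer of $v \mapsto \theta_E(t, v)$ over $\overline{B(0,1)}$ is the direction making $E + \gamma_e t v$ the tangency point of the ray from the origin to the disc $\overline{B(E, \gamma_e t)}$ on the side dictated by the angular bounds of Assumption \ref{assum ini cond corner}, and analogously for $P$. Matching this with the PMP optimality conditions (the costate is constant and, by transversality, radial at the terminal time, so the optimal velocity is orthogonal to the line of sight and points tangentially around the corner) delivers the identification of the optimal direction with the geometric maximizer. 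Once this replacement is in place, the remainder of the argument transports verbatim from the proof of Proposition \ref{prop: value represen formula}.
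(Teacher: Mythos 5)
Your proposal is correct and takes essentially the same route as the paper, which omits the proof precisely because it is the argument of Proposition \ref{prop: value represen formula} transported to polar coordinates, with termination within $[0,t_0]$ characterized by $\theta_E - \theta_P = \pi$ — exactly the two-step (open-loop evader commitment plus intermediate value theorem, then PMP straight lines with terminal alignment through the corner) reconstruction you give. The elementary tangency characterization of the angular maximizers that you substitute for Lemma \ref{lem: max horizons} is the content of the paper's Lemma \ref{lem: S(t) corner explicit}.
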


\begin{proof}
    The proof is very similar to that of Proposition \ref{prop: value represen formula} and is omitted. One only needs to notice that, for $t\leq t_0$, the position of the game $(E(t),P(t))$ reaches the boundary of the game domain $\boundary$ if and only if $\theta_E(t) - \theta_P(t) = \pi$ for some $t\in (0,t_0]$, i.e. if the points $E(t), P(t)$ and $\bfx^\ast$ are aligned.
\end{proof}

In the following lemma,  we provide an analytic expression for the function $S(t)$ in Proposition \ref{prop: represen form corner}. This obviously consists on explicitly computing, for any $t\in (0,t_0]$,  the maximum over  $v\in \overline{B(0,1)}$ of $\theta_E (t,v)$ and $\theta_P(t,v)$. 

\begin{lemma}
\label{lem: S(t) corner explicit}
Consider the two-player surveillance-evasion game in a two-dimensional environment with one obstacle and isotropic and homogeneous dynamics \eqref{homogeneous isotropic intro}. Let the initial condition of the game $(E,P)\in \game$ satisfy Assumption \ref{assum ini cond corner}, and let $t_0>0$ satisfy \eqref{t_0 small cond corner}.
    For any $t\in [0,t_0]$, define the function $S(t)$ from Proposition \ref{prop: represen form corner} is given by
    $$
    S(t) = \theta_E - \theta_P + \arcsin \left( \dfrac{\gamma_e t}{d_E} \right) - \arcsin \left( \dfrac{\gamma_p t}{d_P} \right) \qquad \forall t\in [0, t_0].
    $$
\end{lemma}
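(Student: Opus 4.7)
\textbf{Proof plan for Lemma \ref{lem: S(t) corner explicit}.}
The plan is to reduce the lemma to a one-line trigonometric computation, once we argue that the map $v \mapsto \theta_E(t,v)$ (and the analogous one for $P$) really does attain its maximum at the tangent point expected from elementary geometry. Since the quantities $\max_v \theta_E(t,v)$ and $\max_v \theta_P(t,v)$ depend only on the reachable disks $\overline{B(E,\gamma_e t)}$ and $\overline{B(P, \gamma_p t)}$, I would treat the two maxima separately by the same argument and then subtract.

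First, I would use the smallness condition \eqref{t_0 small cond corner} together with the distance assumption $\min\{d_E,d_P\}>\underline d$ from Assumption \ref{assum ini cond corner} to conclude that, for every $t\in[0,t_0]$, one has $\gamma_e t < d_E$ and $\gamma_p t < d_P$. In particular, neither reachable disk contains the origin $\bfx^\ast$, so the polar angle is a smooth, single-valued function on each disk in a neighbourhood of the rays going to $E$ and $P$ (the angular coordinates stay inside the open interval $(\theta_1-\pi/2,\theta_2+\pi/2)$ guaranteed by Assumption \ref{assum ini cond corner}). This justifies treating $\theta_E(t,\cdot)$ and $\theta_P(t,\cdot)$ as continuous functions into $\R$ with no modular ambiguity.

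Next, I would compute $\max_{v\in\overline{B(0,1)}}\theta_E(t,v)$ geometrically. The set $\{E+\gamma_e t\, v : v\in\overline{B(0,1)}\}$ is the closed disk of radius $\gamma_e t$ centred at $E$. The maximum polar angle over this disk is attained at the point where a ray from the origin is tangent to the disk on the ``upper'' side (the side of increasing $\theta$). Drawing the right triangle with hypotenuse $OE$ of length $d_E$, one cathetus the radius $\gamma_e t$ at the tangent point, and the angle at $O$ between $OE$ and the tangent ray, we obtain
\[
\max_{v\in\overline{B(0,1)}}\theta_E(t,v) \;=\; \theta_E + \arcsin\!\left(\frac{\gamma_e t}{d_E}\right).
\]
The same argument applied to $P$ yields $\max_v \theta_P(t,v) = \theta_P + \arcsin(\gamma_p t/d_P)$.

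Subtracting these two identities gives the stated formula. The only delicate point is verifying that the tangent configuration is genuinely compatible with the angular window in Assumption \ref{assum ini cond corner}, i.e.\ that the maximising points lie inside the branch on which $\arcsin$ is the correct local inverse of $\sin$; this is the main obstacle but is handled by choosing $t_0$ small enough (as in \eqref{t_0 small cond corner}) so that $\arcsin(\gamma_e t_0/d_E)$ and $\arcsin(\gamma_p t_0/d_P)$ are both strictly less than $\pi/2$ and the shifted angles $\theta_E + \arcsin(\gamma_e t/d_E)$, $\theta_P + \arcsin(\gamma_p t/d_P)$ remain inside $(\theta_1-\pi/2,\theta_2+\pi/2)$. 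This justifies the formula throughout $[0,t_0]$.
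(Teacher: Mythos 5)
Your proposal is correct and follows essentially the same route as the paper: identify the reachable sets as the disks $\overline{B(E,\gamma_e t)}$ and $\overline{B(P,\gamma_p t)}$, locate the angular maximiser at the tangency point of the ray from $\bfx^\ast$, and read off $\arcsin(\gamma_e t/d_E)$ and $\arcsin(\gamma_p t/d_P)$ from the resulting right triangles before subtracting. Your extra care about $\gamma_e t<d_E$, $\gamma_p t<d_P$ and the absence of angular wrap-around is exactly what condition \eqref{t_0 small cond corner} guarantees and is consistent with (indeed slightly more explicit than) the paper's argument.
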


\begin{proof}
We need to compute, for any $t\in (0,t_0]$, the maximum over $v\in \overline{B(0,1)}$ of $\theta_E (t,v)$ and $\theta_P(t,v)$. Let us focus on $\theta_P(t,v)$. Since $t\leq t_0$, the reachable set in time $t$ from $P$ is the closed ball $\overline{B(P, \gamma_p t)}$. Then, it is not difficult to prove that there exists a unique $v_P^\ast\in \overline{B(0,1)}$ which maximises $\theta_P (t,v)$. Moreover, $v_P^\ast$ is such that the line passing through $P + \gamma_ptv_P^\ast$ and $\bfx^\ast$ is tangent to $\overline{B(P, \gamma_p t)}$. See Figure \ref{fig:opt_traj_corner} (left) for an illustration.
    Since the points $\bfx^\ast$, $P$ and $P + \gamma_ptv_P^\ast$ form a right triangle, we have
    $$
    \max_{v\in \overline{B(0,1)}} \theta_P (t,v) - \theta_P = \arcsin \left( \dfrac{\gamma_p t}{d_P} \right).
    $$
    Using exactly the same argument, one can deduce that
    $$
    \max_{v\in \overline{B(0,1)}} \theta_E (t,v) - \theta_E = \arcsin \left( \dfrac{\gamma_e t}{d_E} \right).
    $$
    Hence, we can write $S(t)$ from Proposition \ref{prop: represen form corner} as
    $$
    S(t) = \theta_E - \theta_P + \arcsin \left( \dfrac{\gamma_e t}{d_E} \right) - \arcsin \left( \dfrac{\gamma_p t}{d_P} \right) \qquad \forall t\in [0, t_0].
    $$
\end{proof}

\begin{figure}
    \centering
    \includegraphics[scale=.45]{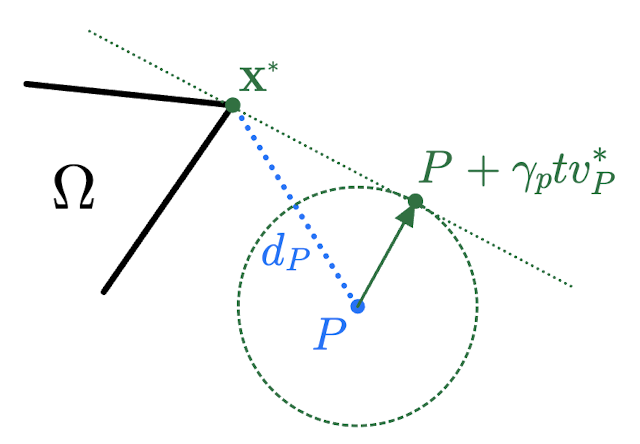}
    \includegraphics[scale=.45]{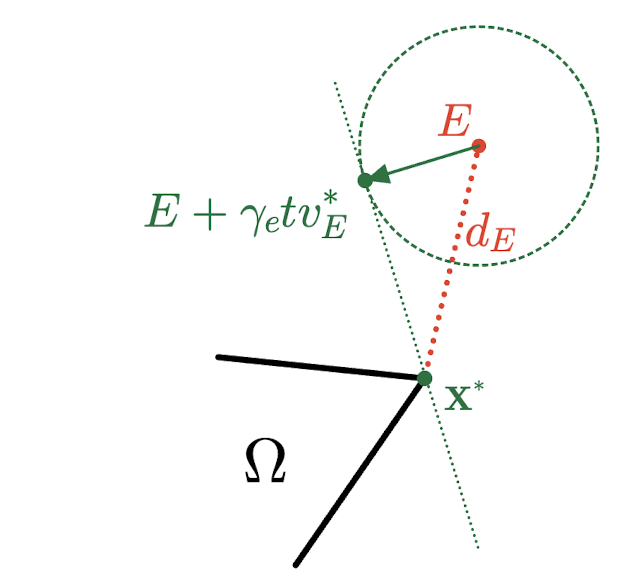}
    \caption{Illustration of the trajectories maximizing the angular coordinate in time $t$ from the initial positions $P$ (left) and $\theta_E(t,v)$ from $E$ (right). Recall that we assumed $\bfx^\ast = 0$.}
    \label{fig:opt_traj_corner}
\end{figure}

We can now proceed with the proof of Theorem \ref{thm: corner obst intro}, which consists in an application of Proposition \ref{prop: represen form corner}, along with the use of the explicit expression of $S(t)$ obtained in Lemma \ref{lem: S(t) corner explicit}.

\begin{proof}[Proof of Theorem \ref{thm: corner obst intro}]
    By virtue of Lemma \ref{lem: S(t) corner explicit}, the function $S(t)$ is given by
    $$
    S(t) = \theta_E - \theta_P + \arcsin \left( \dfrac{\gamma_e t}{d_E} \right) - \arcsin \left( \dfrac{\gamma_p t}{d_P} \right) \qquad \forall t\in [0, t_0].
    $$  
    Since $\arcsin(\cdot)$ is increasing in $[0,1]$, it follows that, whenever $\frac{\gamma_e}{d_E} \leq  \frac{\gamma_p}{d_P}$,
    it holds that $S(t) \leq S(0) < \pi$ for all $\in [0,t_0]$, which by Proposition \ref{prop: represen form corner} implies that $V (E,P)\geq t_0$. 
    
    On the other hand, if $\frac{\gamma_e}{d_E} > \frac{\gamma_p}{d_P}$, one can readily check that the function $t\mapsto S(t)$ is convex, and therefore, we have
    $$
    S(t) \geq \theta_E-\theta_P + \left( \dfrac{\gamma_e}{d_E} - \dfrac{\gamma_p}{d_P} \right) t,\qquad 
    \forall t\in [0,t_0].
    $$
    This implies that, $S(\hat{t})\geq \pi$,  where $\hat{t}:= \dfrac{\pi - (\theta_E-\theta_P)}{\frac{\gamma_e}{d_E} - \frac{\gamma_p}{d_P}} \leq t_0$, and by virtue of Proposition \ref{prop: represen form corner}, $V(E,P)\leq \hat{t}$ provided $\hat{t}\leq t_0$.
\end{proof}

As a corollary of Theorem \ref{thm: corner obst intro}, we can obtain the boundary profile of the value of the game on the boundary of the target set $\target$.

\begin{corollary}
    \label{cor: boundary estimates corner}
    Consider the two-player surveillance-evasion game in a two-dimensional environment with one obstacle and homogeneous dynamics.
    Let $(E,P)\in \boundaryT$ be such that $\min\{d_E,d_P\}>\underline{d}>0$, and assume that the segment $[E,P]$ is tangent to $\obs$ on a single point $\bfx^\ast$ satisfying \eqref{assum horizon corner intro}.
    Then, there exists a constant $t_0>0$, depending on $\gamma_e,\gamma_p,\underline{d}$ and $r$ such that,
    for any sequence $\{(E_n,P_n) \}_{n\geq 1}\subset \game$ converging to $(E,P)$, it holds that
    $$
    \lim_{n\to\infty} V (E_n,P_n) \geq 0, \qquad \text{if $(E,P)\in \usable$,}
    $$
    and
    $$
    \liminf_{n\to\infty} V (E_n,P_n) \geq t_0, \qquad \text{if $(E,P)\in \boundaryT\setminus \usable$.}
    $$
\end{corollary}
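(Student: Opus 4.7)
The plan is to deduce the corollary directly from Theorem~\ref{thm: corner obst intro} applied along the sequence, once I verify that its hypotheses pass to $(E_n,P_n)$ for $n$ large enough. First, I would set up polar coordinates centred at $\bfx^\ast$. Since $(E,P)\in\boundaryT$ with $[E,P]$ tangent to $\obs$ at the corner and $\min\{d_E,d_P\}>\underline{d}$, the limit point satisfies all the angular inequalities in \eqref{angular coord hyp intro} except that the last one degenerates to the equality $\theta_E-\theta_P=\pi$. Continuity of the polar coordinates plus the fact that $(E_n,P_n)\in\game$ (so $\theta_{E_n}-\theta_{P_n}<\pi$) then furnish some $n_0$ such that for every $n\geq n_0$ the pair $(E_n,P_n)$ satisfies \eqref{angular coord hyp intro} with $\min\{d_{E_n},d_{P_n}\}>\underline{d}/2$. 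Consequently Theorem~\ref{thm: corner obst intro} applies with a single $t_0$ that depends only on $\gamma_e,\gamma_p,\underline{d}$ and, through the corner opening parameters $\theta_1,\theta_2$, on $r$.

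Next, for the usable case $(E,P)\in\usable$ I would invoke \eqref{usable simple case}, which gives $\gamma_e d_P>\gamma_p d_E$, equivalently $\gamma_e/d_E-\gamma_p/d_P>0$. By continuity this strict inequality persists for $(E_n,P_n)$ when $n$ is large, and moreover $\theta_{E_n}-\theta_{P_n}\to\pi$ because $(E_n,P_n)\to(E,P)\in\boundaryT$ and the segment $[E,P]$ passes through $\bfx^\ast$. The compatibility condition $\theta_{E_n}-\theta_{P_n}\geq \pi - t_0(\gamma_e/d_{E_n}-\gamma_p/d_{P_n})$ in part~(2) of Theorem~\ref{thm: corner obst intro} is therefore satisfied for $n$ large, and the upper bound
$$
V(E_n,P_n)\leq \frac{\pi-(\theta_{E_n}-\theta_{P_n})}{\gamma_e/d_{E_n}-\gamma_p/d_{P_n}}
$$
has a numerator tending to $0$ and a denominator tending to the strictly positive constant $\gamma_e/d_E-\gamma_p/d_P$. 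Hence $V(E_n,P_n)\to 0$, which in particular gives the stated $\liminf\geq 0$.

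For the non-usable case $(E,P)\in\boundaryT\setminus\usable$, condition \eqref{non-usable simple case} yields $\gamma_e d_P<\gamma_p d_E$. By continuity, $\gamma_e d_{P_n}\leq\gamma_p d_{E_n}$ for every sufficiently large $n$, so part~(1) of Theorem~\ref{thm: corner obst intro} gives $V(E_n,P_n)\geq t_0$ for those $n$, and therefore $\liminf_{n\to\infty}V(E_n,P_n)\geq t_0$.

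The main obstacle that I expect is the verification that the visibility-horizon configuration built into Assumption~\ref{assum ini cond corner}, namely that the upper horizon from $P_n$ and the lower horizon from $E_n$ both coincide with the corner $\bfx^\ast$, survives the passage from $(E,P)$ to its perturbations. This is a local geometric fact: the corner condition \eqref{assum horizon corner intro} ensures that if a vantage point has its visibility horizon at the tip, then any sufficiently small perturbation of the vantage point continues to have its horizon pinned at the tip, because nearby boundary points sit strictly behind the two half-planes $(x-\bfx^\ast)\cdot v_i\leq 0$. This observation, together with the uniform lower bound $\underline{d}/2$ on $d_{E_n},d_{P_n}$, is what legitimises the uniform choice of $t_0$ along the whole sequence and closes the argument.
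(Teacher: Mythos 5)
Your argument is correct and is essentially the paper's own: the paper likewise verifies that Assumption~\ref{assum ini cond corner} and the smallness condition \eqref{t_0 small cond corner} hold along the sequence for $n\geq n_0$ with a $t_0$ independent of $n$, and then concludes from the polar representation formula of Proposition~\ref{prop: represen form corner} together with $\theta_{E_n}-\theta_{P_n}\to\pi$; you simply invoke the packaged consequence, Theorem~\ref{thm: corner obst intro}, instead of the proposition directly. The only loose end is that your step ``non-usable $\Rightarrow \gamma_e d_P<\gamma_p d_E$'' uses the converse of \eqref{non-usable simple case}, which leaves out the borderline case $\gamma_e d_P=\gamma_p d_E$; this is an imprecision shared with (indeed inherited from) the paper's statement and proof, which also only treat the two strict-inequality regimes.
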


\begin{proof}
     Let $\{(E_n,P_n)\}_{n\geq 1}$ be a sequence in $\game$ as in the statement of the Theorem, i.e. $(E_n,P_n)$ converges to a boundary point $(E,P)\in \boundary$, for which the common horizon point $\bfx^\ast$ satisfies \eqref{assum horizon corner intro}. We can assume without loss of generality that $\bfx^\ast$ is the origin.
    By \eqref{assum horizon corner intro}, there exists $n_0\in\N$ such that $(E_n,P_n)$ satisfies Assumption \ref{assum ini cond corner} for all $n\geq n_0$. Since $d_E,d_P>\underline{d}$, we can fix $t_0>0$, independently of $n$, satisfying \eqref{t_0 small cond corner} for all $n\geq n_0$.
    We are therefore in position to apply Proposition \ref{prop: represen form corner}, and the conclusion follows by noting that $\theta_{E_n} - \theta_{P_n} \to \pi$ as $n\to \infty$, and that $t_0$ is independent of $n$.
\end{proof}

We end the section with the proof of Theorem \ref{thm: semipermeable barrier}, which ensures the existence of a semi-permeable barrier emanating from the limit of the usable part of $\boundaryT$.

\begin{proof}[Proof of Theorem \ref{thm: semipermeable barrier}]
In view of the relation $\gamma_e d_P = \gamma_p d_E$, we clearly see that the surface $\mathcal{S}$ is smooth and has dimension three, so it is a smooth hyper-surface.
In order to prove the semi-permeability property,  we write the controls $a(\cdot),b(\cdot): (0,\infty) \to \overline{B(0,1)} $ in polar coordinates as $(a_r (\cdot),  a_\theta (\cdot))$ and $(b_r(\cdot), b_\theta (\cdot))$.
Consider, for each player can use a non-anticipative strategy that mimics the radial component of the opponent's control, i.e.
the strategy for $E$ is given by
$$
\alpha [b_r(\cdot), b_\theta (\cdot)] (t) = b_r(t) \qquad \forall t>0,
$$
and the strategy for $P$ is given by
$$
\beta [a_r(\cdot), a_\theta (\cdot)] (t) = a_r(t) \qquad \forall t>0.
$$
If the initial position $(E,P)$ lies on the hyper-surface $\mathcal{S}$, one can readily verify that by using these non-anticipating strategies the relation $\gamma_e d_P(t) = \gamma_p d_E(t)$ is preserved for all $t>0$.
\end{proof}

\section{Summary and conclusion}
\label{sec: conclusions}

We analyse a class of pursuit-evasion games based on the visibility between the players.
Assuming simple control models for the dynamics of the players, the payoff of the game is given by the time to a target region (the non-visibility region).
The object of interest in this context is the so-called value of the game, which represents the best payoff that each player can ensure, assuming optimal play by the opponent.
The value of the game can be characterised by means of a HJI equation.

Many numerical methods to approximate the value of the game (e.g. Fast Marching and Fast Sweeping methods) rely on the evolution of the solution along the characteristics.
It is therefore of utmost importance to have the correct characterisation of the boundaries in the state space, where the characteristics emanate.

In some parts of the target set, the characteristics propagate towards the interior of the game domain. This is known as the usable part of the boundary.
On the rest of the target set, the value of the game is determined by characteristics arriving from other parts of the game domain. This part of the target set is known as the non-usable part of the boundary, as it cannot be used by the evaders to reach the target set.
We derive a convenient characterisation of the usable/non-usable part of the boundary. 
This will be essential for developing efficient numerical schemes for solving the HJI equation, utilizing the equation's simple structure of characteristics.

Moreover, we analyse the behaviour of the value on the interface between the usable and the non-usable part.
Namely, we describe how the value transitions from being zero on the usable part to being positive on the non-usable part.
Interestingly enough, we show that the value enjoys a different behaviour, which depends on the regularity of the obstacles involved in the game.

Since the original works by Isaacs in the 1960's, the structure of the discontinuity set of the value has attracted a lot of attention among the differential game community.
However, the behaviour near the interface between the usable part and the non-usable part of the boundary is rarely addressed in the literature.
The approach presented in this work might be used to answer similar questions in other differential game problems.

\section*{Acknowledgment}
Tsai is partially supported by Army Research Office, under Cooperative Agreement Number W911NF-19-2-0333 and National Science Foundation Grant DMS-2110895.

\appendix
\section{Auxiliary Lemmas for section \ref{sec: boundary estimates smooth}}
\label{sec: lemmas boundary}

This section is devoted to estimate the maximum over $v\in \overline{B(0,1)}$ of $\hls (t,v)$ and $\hus(t,v)$ appearing in the definition of the function $S(t)$ introduced in Proposition \ref{prop: value represen formula}.
We recall that these functions represent (in terms of the parametrisation $\Sigma(\cdot)$ from Assumption \ref{assump: initial pos}) the maximum attainable for the lower- and upper-horizons of the visibility at time $t>0$, from the initial position $E$ and $P$ respectively.
In view of Lemma \ref{lem: max horizons}, the unique trajectories  attaining such maxima are given by 
\begin{equation}
\label{opt traj appendix}
E(\tau) = E + \tau \gamma_e v^\ast_E
\quad \text{and} \quad
P(\tau) = P + \tau \gamma_p v^\ast_P,
\qquad \text{for} \ \tau \in [0, t],
\end{equation}
where $v^\ast_E$ and $v^\ast_P$ satisfy \eqref{opt vectors}.

The visibility horizons associated to these trajectories can be estimated by using the equations for the visibility dynamics \eqref{visib dynamics E} and \eqref{visib dynamics P} introduced in \cite{tsai2004visibility}. 
A key feature in these equations is the fact that the velocity of the horizon is inversely proportional to the distance between the vantage point and the horizon.
In the case of the trajectories \eqref{opt traj appendix}, 
with $v_E^\ast$ and $v_P^\ast$ as in Lemma \ref{lem: max horizons},
we see that the position of $E(\tau)$ moves closer to the obstacle, whereas $P(\tau)$ moves away from it (see Figure \ref{fig: opt traj Lemma}).
As we shall see in Lemma \ref{lem: estimate  s bar}, this property translates in a different sign for the second-order term in the estimates of the maximum of $\hls (t,v)$ and $\hus(t,v)$, which is crucial when analysing the sign of the function
$S(t) := \max_{v\in \overline{B(0,1)}} \hls (t,v) - \max_{v\in \overline{B(0,1)}} \hus (t,v)$ introduced in Proposition \ref{prop: value represen formula}.

The estimates of $\max_{v\in \overline{B(0,1)}} \hls (t,v)$ and $\max_{v\in \overline{B(0,1)}} \hus (t,v)$ are obtained in two steps:
\begin{enumerate}
    \item In Lemma \ref{lem: estimate dist}, we estimate the distance between the vantage point $E(\tau)$ (resp. $P(\tau)$) and the corresponding horizon point $\hl (\tau)$ (resp. $\hu (\tau)$), associated to the trajectory maximizing $\hls (t)$ (resp. $\hus (t)$).
    \item In Lemma \ref{lem: estimate  s bar}, we plug the estimates from Lemma \ref{lem: estimate dist} in the equations for the visibility \eqref{visib dynamics E} and \eqref{visib dynamics P} to finally obtain the estimates for the maximum of $\hls (t,v)$ and $\hus(t,v)$.
\end{enumerate}

From now on, we denote these distance between the vantage point $E(\tau)$ (resp. $P(\tau)$) and the horizon $\hl (\tau)$ (resp. $\hu(\tau)$) by
\begin{equation}
\label{dist hor def}
d_E (\tau) := |\hl (\tau) - E(\tau)|
\quad \text{and} \quad
d_P (\tau) := |\hu (\tau) - P(\tau)|.
\end{equation}
Similarly, we denote the curvature of $\partial\obs_L$ at the horizon points $\hl(\tau)$ and $\hu(\tau)$ by
\begin{equation}
\label{curv hor def}
\kappa_E (\tau) := \kappa (\hls (\tau))
\quad \text{and} \quad
\kappa_P (\tau) := \kappa (\hus (\tau)).
\end{equation}

\begin{lemma}
    \label{lem: estimate dist}
    Under the assumptions of Proposition \ref{prop: value represen formula}, with $t_0>0$ satisfying \eqref{t_0 small cond}, set $$
    \overline{t}_0 = \min \left\{ t_0 , \,  \frac{\kappa_0^3 t_0}{C_L \| \kappa (\cdot) \|_\infty}, \,
    \dfrac{\kappa_0 t_0}{\sqrt{C_L + \| \kappa (\cdot) \|_\infty^2}}
    \right\}, \quad \big(\text{$\overline{t}_0 = t_0$ in the case $C_L=0$}\big)
    $$
    and for any $t\in (0,\overline{t}_0]$, consider the trajectories \eqref{opt traj appendix}.
    For any $\tau\in [0,t]$, let $\hl (\tau)$ and $\hu (\tau)$ be the lower and upper visibility horizons from $E(\tau)$ and $P(\tau)$ respectively, and let $\hls (\tau) = \Sigma^{-1} (\hl(\tau))$ and $\hus (\tau) = \Sigma^{-1}(\hu (\tau))$.

    Then, there is a constant $C_e>0$ such that
    \begin{eqnarray*}
    d_E(\tau)^2 &\leq & d_E(0)^2
    - \dfrac{2 \gamma_e}{\kappa_E (0)} \tau + C_e t \tau \\
    d_E(\tau)^2 &\geq & d_E(0)^2
    - \dfrac{2 \gamma_e}{\kappa_E (0)} \tau - C_e t \tau,
    \end{eqnarray*}
    for all $\tau\in [0,t]$, and there is a constant $C_p>0$ such that
     \begin{eqnarray*}
    d_P(\tau)^2 &\leq & d_P(0)^2
    + \dfrac{2 \gamma_p}{\kappa_P (0)} \tau + C_p t \tau \\
    d_P(\tau)^2 &\geq & d_P(0)^2
    + \dfrac{2 \gamma_p}{\kappa_P (0)} \tau - C_p t \tau,
    \end{eqnarray*}
    for all $\tau\in [0,t]$. The constants $C_e$ and $C_p$ depend only on $\gamma_e,\gamma_p$, $L$, $C_L$, $\kappa_0$ from the Assumption \ref{assump: initial pos} and $t_0$ from \eqref{t_0 small cond}.
\end{lemma}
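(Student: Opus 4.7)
The plan is to differentiate $d_E(\tau)^2$ and $d_P(\tau)^2$ along the straight-line trajectories \eqref{opt traj appendix} and then show that the leading-order behaviour is $\mp 2\gamma/\kappa(0)$, with a remainder uniformly of size $O(t)$. The starting point is the tangency condition $\bfr(\hls(\tau))\cdot\bfn(\hls(\tau))=0$, valid at every $\tau$, together with the decomposition $\hl(\tau)-E(\tau)=d_E(\tau)\bfr(\hls(\tau))$. Plugging \eqref{visib dynamics E} into $\frac{d}{d\tau}d_E(\tau)^2=2(\hl(\tau)-E(\tau))\cdot(\dot{\hl}(\tau)-\dot{E}(\tau))$ yields the clean identity
\begin{equation*}
\frac{d}{d\tau}d_E(\tau)^2 \;=\; \frac{2\gamma_e\,v_E^{\ast}\cdot\bfn(\hls(\tau))}{\kappa_E(\tau)}\;-\;2\, d_E(\tau)\,\gamma_e\, v_E^{\ast}\cdot\bfr(\hls(\tau)),
\end{equation*}
and analogously for $d_P(\tau)^2$ with $v_P^{\ast}$, $\kappa_P$, $\bfn(\hus(\tau))$, $\bfr(\hus(\tau))$.

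To extract the leading term I invoke Lemma \ref{lem: max horizons}: $v_E^{\ast}=-\bfn(\hls(t))$ and $v_P^{\ast}=+\bfn(\hus(t))$. The hypothesis \eqref{t_0 small cond} forces $d_E(\tau),d_P(\tau)\ge \underline d:=\tfrac12\min\{\operatorname{dist}(E,\obs),\operatorname{dist}(P,\obs)\}>0$ and keeps $\hls(\tau),\hus(\tau)$ inside $[-L,L]$ for all $\tau\in[0,t_0]$. Inserting this lower bound and $\kappa\geq \kappa_0$ into \eqref{visib dynamics E} gives $|\dot{\hls}(\tau)|\leq \gamma_e/(\kappa_0\underline d)$, so $|\hls(\tau)-\hls(0)|\leq C_1\tau$ on $[0,t_0]$. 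Since $\Sigma\in C^2$ with $|\Sigma''|=\kappa\leq\|\kappa\|_\infty$ and $\kappa$ is $C_L$-Lipschitz, the maps $\tau\mapsto \bfn(\hls(\tau))$ and $\tau\mapsto \bfr(\hls(\tau))$ are Lipschitz in $\tau$ with constant $\|\kappa\|_\infty C_1$, while $\tau\mapsto\kappa_E(\tau)$ is Lipschitz with constant $C_L C_1$. Applying these bounds twice (at the argument $\tau$ and at $t$) to the unit vector identity $v_E^{\ast}=-\bfn(\hls(t))$ produces
\begin{equation*}
v_E^{\ast}\cdot\bfn(\hls(\tau))=-1+O(t),\qquad v_E^{\ast}\cdot\bfr(\hls(\tau))=O(t),\qquad \frac{1}{\kappa_E(\tau)}=\frac{1}{\kappa_E(0)}+O(\tau),
\end{equation*}
uniformly for $\tau\in[0,t]$, where the $O$-constants depend only on $\gamma_e,\kappa_0,\|\kappa\|_\infty, C_L$ and $\underline d$.

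Substituting these expansions into the key identity and using that $d_E(\tau)$ is bounded above by $d_E(0)+\gamma_e t_0$ yields
\begin{equation*}
\frac{d}{d\tau}d_E(\tau)^2=-\frac{2\gamma_e}{\kappa_E(0)}+R_E(\tau),\qquad |R_E(\tau)|\leq C_e\,t,
\end{equation*}
for a constant $C_e$ depending only on the parameters listed in the statement; integrating on $[0,\tau]$ gives the two claimed bounds on $d_E(\tau)^2$. The identical argument with $v_P^{\ast}=+\bfn(\hus(t))$ produces the sign-flipped leading term $+2\gamma_p/\kappa_P(0)$ for $d_P(\tau)^2$, which reflects the geometric fact that the evader's optimal direction points toward the obstacle while the pursuer's points away. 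The purpose of the explicit $\overline{t}_0$ in the statement is to ensure that the three Lipschitz corrections stay small enough for the $O(t)$ remainder to absorb every cross-term without additional smallness assumptions; I expect this bookkeeping, rather than any single estimate, to be the main technical burden, since one must verify that the product of a Lipschitz error in $\bfr$ (which enters multiplied by $d_E(\tau)$) with the explicit form of the remainders in $\bfn$ and $1/\kappa_E$ is genuinely $O(t)$ with constants controlled solely by $\gamma_e,\gamma_p,L,C_L,\kappa_0$ and $t_0$.
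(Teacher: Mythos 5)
Your overall strategy is the same as the paper's: differentiate $d_E(\tau)^2$ and $d_P(\tau)^2$ along the straight trajectories, insert the horizon dynamics \eqref{visib dynamics E}--\eqref{visib dynamics P}, use $v_E^\ast=-\bfn(\hls(t))$, $v_P^\ast=\bfn(\hus(t))$, and control all perturbations through the horizon-speed bound coming from \eqref{t_0 small cond}. The treatment of the leading term $2\gamma_e\,v_E^\ast\cdot\bfn(\hls(\tau))/\kappa_E(\tau)$ is fine (your first-order expansion of $v_E^\ast\cdot\bfn(\hls(\tau))$ is cruder than the paper's second-order one, but that extra precision is only needed later, in Lemma \ref{lem: estimate  s bar}, not here).

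The gap is in the cross term, which you yourself flag as ``the main technical burden'' but do not resolve. You write it as $2\gamma_e\,d_E(\tau)\,v_E^\ast\cdot\bfr(\hls(\tau))$, bound the angle factor by $O(t)$, and then multiply by $d_E(\tau)\le d_E(0)+\gamma_e t_0$. This produces a remainder of size $C\,d_E(0)\,t$, so your constant $C_e$ depends on $d_E(0)=|E-\hl|$ (and likewise $C_p$ on $d_P(0)$), which is \emph{not} among the admissible dependencies in the statement ($\gamma_e,\gamma_p,L,C_L,\kappa_0,t_0$) and is not bounded by them: Assumption \ref{assump: initial pos} and \eqref{t_0 small cond} give a lower bound $\operatorname{dist}(E,\obs)\ge 2\gamma_e t_0$ but no upper bound on the distance to the horizon. (Your bound $d_E(\tau)\le d_E(0)+\gamma_e t_0$ also neglects the motion of $\hl(\tau)$ itself, a smaller fixable slip.) The paper avoids the issue by never factoring out $d_E(\tau)$: since $(\hl(t)-E(t))\cdot v_E^\ast=0$ exactly (orthogonality at the terminal time $t$), it bounds $\bigl|(\hl(\tau)-E(\tau))\cdot v_E^\ast\bigr|\le \bigl|(\hl(\tau)-E(\tau))-(\hl(t)-E(t))\bigr|\le\bigl(\tfrac{1}{\kappa_0 t_0}+\gamma_e\bigr)\,t$, a bound with only admissible constants. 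Alternatively, your route can be repaired by using the sharper horizon-speed bound $|\dot{\mathbf{s}}_E^-(\sigma)|\le \gamma_e/(\kappa_0 d_E(\sigma))$ so that what actually multiplies the angle estimate is the ratio $d_E(\tau)/\min_\sigma d_E(\sigma)$, which \emph{is} controlled by admissible constants (each $d_E(\sigma)\ge\gamma_e t_0$ and $d_E$ varies by at most $\gamma_e t_0+1/\kappa_0$ on $[0,t_0]$); but as written, with the fixed lower bound $\underline d$ and the multiplication by $d_E(0)+\gamma_e t_0$, the claimed uniformity of $C_e,C_p$ is not established.
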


\begin{proof}
    Let us start by estimating $d_E(\tau)^2 = |\hl (\tau) - E(\tau)|^2$.
    Using the dynamics of the players \eqref{homogeneous isotropic intro} and of the visibility horizon \eqref{visib dynamics E}, we obtain
    \begin{eqnarray}
        \dfrac{d}{d\tau} | \hl (\tau) -  E(\tau) |^2 &=&
        2 \left( \hl(\tau) - E(\tau) \right) \cdot \left( \dot{\mathbf{x}}_E^- (\tau) - \dot{E} (\tau) \right) \nonumber \\
        &=& \dfrac{2\gamma_e}{\kappa_E (\tau)} v_E^\ast \cdot \bfn (\hls (\tau)) - 2 \gamma_e \left( \hl (\tau) - E(\tau) \right) \cdot v_E^\ast \nonumber \\
        &=& 2\gamma_e A_1(\tau) A_2(\tau) - 2\gamma_e A_3(\tau), \label{dist deriv estimate}
    \end{eqnarray}
    where we recall that $\bfn ( \hls(\tau))$ is the outer normal vector to $\partial\obs_L$ at $\hl (\tau) = \Sigma(\hls(\tau))$.

    The estimates are obtained by controlling the three following quantities:
    \begin{equation}
    \label{A1 A2 A3}
    A_1(\tau) = \frac{1}{\kappa (\hls (\tau))},
    \qquad
    A_2(\tau) = v^\ast_E \cdot \bfn (\hls (\tau))
    \quad \text{and} \quad
    A_3 (\tau) = \left( \hl (\tau) - E(\tau) \right) \cdot v_E^\ast.
    \end{equation}
    
    \underline{\textit{Step 1: Estimate $A_1(\tau)$.}}
    For the first term, we can use the Lipschitz continuity of $\kappa(\cdot)$ in \eqref{hyp smooth and convex} to obtain
    $$
    \dfrac{1}{\kappa (\hls (0))} - \dfrac{C_L}{\kappa_0^2} | \hls (\tau) - \hls (0) | \leq A_1(\tau) \leq 
    \dfrac{1}{\kappa (\hls (0))} + \dfrac{C_L}{\kappa_0^2} | \hls (\tau) - \hls (0) |.
    $$
    Since $\hl (\tau)=\Sigma (\hls (\tau))$, we deduce that $|\dot{\textbf{x}}_E^- (\tau)| = |\Sigma'(\hls (\tau))| |\dot{\mathbf{s}}_E^- (\tau)|$, and therefore, using \eqref{visib dynamics E}, we obtain
    \begin{equation}
    \label{s dot estimate}
    |\dot{\mathbf{s}}_E^- (\tau)| \leq \dfrac{\gamma_e}{\kappa_0 |\hl(\tau) - E(\tau)|} \leq \dfrac{1}{\kappa_0 t_0},
    \end{equation}
    where we used the fact that \eqref{t_0 small cond} implies
    $$
    |\hl (\tau) - E(\tau)| \geq \operatorname{dist}(E(\tau), \obs) \geq \operatorname{dist}(E(0), \obs) - t_0 \gamma_e \geq t_0 \gamma_e.
    $$
    Hence, we have $| \hls (\tau) - \hls (0) | \leq \frac{t}{\kappa_0 t_0}$, which implies
    \begin{equation}
    \label{A1 estimate}
    0 \leq \dfrac{1}{\kappa_E (0)} - \dfrac{C_L}{\kappa_0^3 t_0} t \leq A_1(\tau) \leq 
    \dfrac{1}{\kappa_E (0)} + \dfrac{C_L}{\kappa_0^3 t_0} t,
    \end{equation}
    where the left-most inequality follows from $0< t\leq \overline{t}_0$.

    \underline{\textit{Step 2: Estimate $A_2(\tau)$.}}
    For the second term in \eqref{A1 A2 A3}, we use the assumption \eqref{opt vectors} on $v_E^\ast$ to obtain
    \begin{equation}
    \label{A2 estimate 1}
    A_2(\tau) = \underbrace{v_E^\ast \cdot \bfn (\hls (t))}_{=-1}
    + \, v_E^\ast \cdot \left( \bfn (\hls (\tau)) - \bfn (\hls (t)) \right).
    \end{equation}
    Let us define the function
    $$
    \phi (s) = v_E^\ast \cdot \left( \bfn (s) - \bfn (\hls (t)) \right), \qquad \forall s\in [-L,L]-
    $$
    Using the Frenet-Serret formulas\footnote{If we denote by $\mathbf{t} (s)$ and $\bfn (s)$ the tangent and normal vector to the curve, then $\dot{\mathbf{t}} = \kappa (s) \bfn (s)$ and $\dot{\bfn} (s) = -\kappa (s) \mathbf{t}(s)$.} for the smooth curve $\Sigma (\cdot)$, we can compute
    $$
    \phi'(s) = -\kappa (s) v_E^\ast \cdot \mathbf{t}(s)
    \quad \text{and} \quad
    \phi''(s) = -\kappa'(s) v_E^\ast \cdot \mathbf{t}(s) - \kappa (s)^2 v_E^\ast \cdot \bfn (s). 
    $$
    The assumption \eqref{opt vectors} on $v_E^\ast$ implies $\phi'(\hls (t)) = 0$, and \eqref{hyp smooth and convex} imlpies
    $$
    |\phi''(s)| \leq C_L + \| \kappa(\cdot)\|^2_\infty \qquad \forall s\in [-L,L].
    $$
    We can combine this with \eqref{s dot estimate} to obtain
    $$
    \left|v_E^\ast \cdot \left( \bfn(\hls (\tau)) - \bfn (\hls(t)) \right)\right| \leq  \dfrac{C_L + \| \kappa (\cdot)\|_\infty^2}{2} \left( \hls (\tau) - \hls (t) \right)^2 \leq   \dfrac{C_L + \| \kappa (\cdot)\|_\infty^2}{2} \dfrac{t^2}{\kappa_0^2 t_0^2}.
    $$
    Therefore, it follows from \eqref{A2 estimate 1} that
    \begin{equation}
    \label{A2 estimate 2}
        -1 - \dfrac{C_L + \| \kappa (\cdot)\|_\infty^2}{2\kappa_0^2 t_0^2} t^2
    \leq A_2 (\tau) \leq
    -1 + \dfrac{C_L + \| \kappa (\cdot)\|_\infty^2}{2\kappa_0^2 t_0^2} t^2\leq 0,
    \end{equation}
    where the right-most inequality follows from $0< t\leq \overline{t}_0$.

    \underline{\textit{Step 3: Estimate $A_3(\tau)$.}}
    For the third term in \eqref{A1 A2 A3}, let us define $\mathbf{t}_E (\tau) = \hl (\tau) - E(\tau)$ for all $\tau \in [0,t]$.
    We note that the vector $\mathbf{t}_E(t)$ is tangent to $\partial\obs_L$ at $\hl (t)$, and thus perpendicular to $\bfn (\hl (t))$. Therefore, the hypothesis \eqref{opt vectors} on $v_E^\ast$ implies that
    $$
    -\left| \mathbf{t}_E(t) - \mathbf{t}_E(\tau) \right| \leq A_3 (\tau) \leq \left| \mathbf{t}_E(t) - \mathbf{t}_E(\tau) \right|.
    $$
    We can then use $|\dot{\mathbf{x}}_E^- (\tau)| \leq \frac{1}{\kappa_0 t_0}$ from \eqref{s dot estimate} and $|\dot{E}(\tau)| = \gamma_e$ to obtain
    $$
    \left| \mathbf{t}_E(t) - \mathbf{t}_E(\tau) \right| \leq 
    \left| \hl (t) - \hl(\tau) \right| + \left| E (t) - E(\tau) \right| \leq \left( \dfrac{1}{\kappa_0 t_0} + \gamma_e \right) t,
    $$
    which then yields
    \begin{equation}
        \label{A3 estimate}
        - \left( \dfrac{1}{\kappa_0 t_0} + \gamma_e \right) t \leq A_3 (\tau) \leq  \left( \dfrac{1}{\kappa_0 t_0} + \gamma_e \right) t.
    \end{equation}

    \underline{ \textit{Step 4: Conclusion.}}
    Combining \eqref{dist deriv estimate} and the estimates \eqref{A1 estimate},\eqref{A2 estimate 2},\eqref{A3 estimate}, we obtain
    \begin{eqnarray*}
        \dfrac{d}{d\tau} |\hl (\tau) - E(\tau)|^2 &\leq &
        \left( \dfrac{2\gamma_e}{\kappa_E (0)} - \dfrac{2\gamma_e C_L}{\kappa_0^3 t_0} t \right) \left( -1 + \dfrac{C_L + \| \kappa (\cdot)\|_\infty^2}{2\kappa_0^2 t_0^2} t^2 \right) + \left( \dfrac{2\gamma_e}{\kappa_0 t_0} + 2\gamma_e^2\right) t \\
        &\leq & -\dfrac{2\gamma_e}{\kappa_E (0)} + C_e t,
    \end{eqnarray*}
    for some $C_e>0$ depending on $\gamma_e,\kappa_0, C_L$ and $t_0$. Similarly, we have
    \begin{eqnarray*}
        \dfrac{d}{d\tau} |\hl (\tau) - E(\tau)|^2 &\geq &
        \left( \dfrac{2\gamma_e}{\kappa_E (0)} + \dfrac{2\gamma_e C_L}{\kappa_0^3 t_0} t \right) \left( -1 - \dfrac{C_L + \| \kappa (\cdot)\|_\infty^2}{2\kappa_0^2 t_0^2} t^2 \right) - \left( \dfrac{2\gamma_e}{\kappa_0 t_0} + 2\gamma_e^2\right) t \\
        &\geq & -\dfrac{2\gamma_e}{\kappa_E (0)} - C_e t,
    \end{eqnarray*}
    The upper and lower estimates for $|\hl (\tau) - E(\tau)|^2$ in the statement of the Lemma then follow.
    The estimates for $|\hu (\tau) - P(\tau)|^2$ can be proven similarly. The main difference is that in the estimate of $A_2(\tau)$ in \eqref{A2 estimate 1}, one has $v_P^\ast \cdot \bfn (\hu (t)) = 1$, which in turn implies that $A_2(\tau)\geq 0$ for all $\tau\in[0,t]$, provided $t\leq \overline{t}_0$.
\end{proof}

We are now in position to estimate the maximum over $v\in \overline{B(0,1)}$ of $\hus (t,v)$ and $\hls (t,v)$ appearing in the function $S(t)$ introduced in Proposition \ref{prop: value represen formula}.

\begin{lemma}
\label{lem: estimate  s bar}
Under the assumptions of Proposition \ref{prop: value represen formula}, with $t_0 >0$ satisfying \eqref{t_0 small cond}.
Then there exist $C_e, C_p>0$ and a small enough $0< \overline{t}_0 \leq t_0$, depending on $t_0, \gamma_e,\gamma_p, L,C_L$ and $\kappa_0$, such that
\begin{eqnarray*}
    \max_{v\in \overline{B(0,1)}} \hus (t,v) (t) &\leq & \hls (0) + \dfrac{\gamma_e}{\kappa_E (0) d_E(0)} t + \left( \dfrac{\gamma_e^2}{2\kappa_E (0)^2 d_E(0)^3} + \dfrac{\gamma_e C_L}{\kappa_0^3 d_E(0) t_0} \right) t^2 + C_e t^3 \\
    \max_{v\in \overline{B(0,1)}} \hus (t,v) &\geq & \hls (0) + \dfrac{\gamma_e}{\kappa_E (0) d_E(0)} t + \left( \dfrac{\gamma_e^2}{2\kappa_E (0)^2 d_E(0)^3} - \dfrac{\gamma_e C_L}{\kappa_0^3 d_E(0) t_0} \right) t^2 - C_e t^3
\end{eqnarray*}
and
\begin{eqnarray*}
    \max_{v\in \overline{B(0,1)}} \hus (t,v) &\leq & \hus (0) + \dfrac{\gamma_p}{\kappa_P (0) d_P(0)} t - \left( \dfrac{\gamma_p^2}{2\kappa_P (0)^2 d_P(0)^3} - \dfrac{\gamma_p C_L}{\kappa_0^3  d_P(0) t_0} \right) t^2 + C_p t^3 \\
    \max_{v\in \overline{B(0,1)}} \hus (t,v) &\geq & \hus (0) + \dfrac{\gamma_p}{\kappa_P (0) d_P(0)} t - \left( \dfrac{\gamma_p^2}{2\kappa_P (0)^2 d_P(0)^3} + \dfrac{\gamma_p C_L}{\kappa_0^3 d_P(0) t_0} \right) t^2 - C_p t^3
\end{eqnarray*}
for all $t\in [0, \overline{t}_0]$.
Let us recall that $d_E(0)$ and $d_P(0)$ are defined in \eqref{dist hor def}, and $\kappa_E(0)$ and $\kappa_P(0)$ are defined in \eqref{curv hor def}.
\end{lemma}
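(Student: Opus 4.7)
The proof will follow by integrating the visibility horizon ODEs \eqref{visib dynamics E}--\eqref{visib dynamics P} along the optimal trajectories \eqref{opt traj appendix}. Because $\Sigma$ is an arc-length parametrisation, the rate of change of the scalar horizon coordinate $\hls(\tau)$ (resp.\ $\hus(\tau)$) is, up to an orientation sign,
$$
\dot{\hls}(\tau) \;=\; \pm\,\frac{1}{\kappa_E(\tau)}\,\frac{\dot E(\tau)\cdot\bfn(\hls(\tau))}{d_E(\tau)},\qquad
\dot{\hus}(\tau) \;=\; \pm\,\frac{1}{\kappa_P(\tau)}\,\frac{\dot P(\tau)\cdot\bfn(\hus(\tau))}{d_P(\tau)}.
$$
Fixing the signs by orientation and by \eqref{opt vectors}, the two right-hand sides carry \emph{opposite} effective signs, which is precisely what produces the asymmetric quadratic corrections in the statement.

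\textbf{Key steps.} First, I would package the three bounds obtained during the proof of Lemma~\ref{lem: estimate dist} as
$$
\frac{1}{\kappa_E(\tau)} = \frac{1}{\kappa_E(0)} + O\!\left(\tfrac{C_L}{\kappa_0^3 t_0}\,t\right),\qquad
v_E^\ast\cdot\bfn(\hls(\tau)) = -1 + O(t^2),
$$
so that the factor $\dot E(\tau)\cdot\bfn(\hls(\tau)) = \gamma_e\,v_E^\ast\cdot\bfn(\hls(\tau))$ is $-\gamma_e$ up to an $O(t^2)$ error. Then from Lemma~\ref{lem: estimate dist} (after a Taylor expansion of $x\mapsto 1/\sqrt{x}$ about $x=d_E(0)^2$, valid because $d_E(\tau)^2$ stays within a factor $1/2$--$2$ of $d_E(0)^2$ on $[0,\overline{t}_0]$ provided $\overline t_0$ is small enough) I get
$$
\frac{1}{d_E(\tau)} = \frac{1}{d_E(0)} + \frac{\gamma_e}{\kappa_E(0)\,d_E(0)^3}\,\tau + O(t\tau),
$$
and the analogous expansion for $1/d_P(\tau)$ but with the opposite sign on the linear-in-$\tau$ term (because $d_P(\tau)^2$ is \emph{increasing} in $\tau$ while $d_E(\tau)^2$ is decreasing). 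Multiplying these expansions gives
$$
\dot{\hls}(\tau) = \frac{\gamma_e}{\kappa_E(0)\,d_E(0)} + \frac{\gamma_e^2}{\kappa_E(0)^2\,d_E(0)^3}\,\tau + O(t) + O(\tau^2) + \frac{\gamma_e\,C_L}{\kappa_0^3\,d_E(0)\,t_0}\cdot O(t),
$$
where the last explicit $C_L$-term comes directly from the $A_1$ estimate \eqref{A1 estimate}. Integrating from $0$ to $t$ and collecting error terms into a common cubic bound $C_e t^3$ yields the two inequalities for $\hls$.

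For $\hus$ the analogous computation uses $v_P^\ast = +\bfn(\hus(t,v_P^\ast))$ from \eqref{opt vectors}, giving $\dot P(\tau)\cdot\bfn(\hus(\tau)) = +\gamma_p + O(t^2)$, while Lemma~\ref{lem: estimate dist} yields $\tfrac{1}{d_P(\tau)} = \tfrac{1}{d_P(0)} - \tfrac{\gamma_p}{\kappa_P(0)\,d_P(0)^3}\tau + O(t\tau)$. The product of these two expansions produces the required \emph{negative} coefficient $-\tfrac{\gamma_p^2}{2\kappa_P(0)^2 d_P(0)^3}$ for the quadratic term after integration, with the same structure of $C_L$-correction and cubic remainder $C_p t^3$. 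The choice of $\overline{t}_0$ is dictated by (i) the $\overline{t}_0$ from Lemma~\ref{lem: estimate dist}, and (ii) the smallness required to keep $d_E(\tau)$ bounded away from zero so that the Taylor expansion of $1/\sqrt{\cdot}$ is legitimate.

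\textbf{Main obstacle.} The sole delicate point is bookkeeping of signs and orientations. The parametrisation direction of $\Sigma$, the orientation of the tangent vector $\bfr$ in \eqref{visib dynamics E}--\eqref{visib dynamics P}, and the opposite signs in \eqref{opt vectors} of $v_E^\ast$ versus $v_P^\ast$ all conspire to give the asymmetric quadratic corrections. Once these are disentangled (I would use the convention that, with $E$ playing the ``below'' role as in Figure~\ref{fig:initial pos}, $\hls$ increases when $E$ moves toward the obstacle and $\hus$ increases when $P$ moves \emph{away} from it), the rest is Taylor-expansion bookkeeping and the cubic remainder is absorbed into constants $C_e$, $C_p$ depending only on the quantities listed in the statement.
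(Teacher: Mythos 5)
Your proposal is correct and follows essentially the same route as the paper: along the maximising straight-line trajectories from Lemma \ref{lem: max horizons}, you bound the factors $1/\kappa_E(\tau)$, $v_E^\ast\cdot\bfn(\hls(\tau))$ (the quantities $A_1,A_2$ from the proof of Lemma \ref{lem: estimate dist}) and Taylor-expand $1/d_E(\tau)$, $1/d_P(\tau)$ from the squared-distance estimates of Lemma \ref{lem: estimate dist}, then multiply, integrate in $\tau$, and absorb higher-order terms into $C_e t^3$, $C_p t^3$, with the opposite monotonicity of $d_E$ versus $d_P$ producing the asymmetric quadratic terms exactly as in the paper. The only small point the paper makes explicit and you leave implicit is that the smallness of $\overline{t}_0$ needed to keep $d_E(\tau)$ bounded away from zero can be expressed through $t_0$ (via \eqref{t_0 small cond}, since $d_E(0)\geq 2\gamma_e t_0$) rather than through $d_E(0)$, so that $\overline{t}_0$ depends only on the parameters listed in the statement.
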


\begin{proof}
    We recall that, by virtue of Lemma \ref{lem: max horizons}, the trajectories that maximise  $\hls (t)$ and $\hus (t)$ are
    $$
    E(\tau) = E + \tau \gamma_e v^\ast_E
    \quad \text{and} \quad
    P(\tau) = P + \tau \gamma_p v^\ast_P,
    \qquad \text{for} \ \tau \in [0, t].
    $$
    Hence, we can use \eqref{visib dynamics E} and \eqref{visib dynamics P} to deduce that $\max_{v\in \overline{B(0,1)}} \hls (t,v) = \hls(t)$ and $\max_{v\in \overline{B(0,1)}} \hus (t,v) = \hus (t)$, where $\hls(\tau)$ and $\hus(\tau)$ are the solutions to the differential equations
    $$
    \dot{\mathbf{s}}_E^- (\tau) = \gamma_e \dfrac{|v_E^\ast \cdot \bfn (\hls(\tau))|}{\kappa_E(\tau) d_E(\tau)}  
    \quad \text{and} \quad
    \dot{\mathbf{s}}_P^+ (\tau) = \gamma_p \dfrac{|v_P^\ast \cdot \bfn (\hus(\tau))|}{\kappa_P(\tau) d_P(\tau)} ,
    \quad \text{for} \ \tau \in (0,t),
    $$
    where $d_E(\tau)$ and $d_P(\tau)$ are defined in \eqref{dist hor def} and estimated in Lemma \ref{lem: estimate dist}.

    \underline{\textit{Step 1: Estimates for $\hls (t)$.}}
    Using the notation introduced in \eqref{A1 A2 A3}, we can write
    \begin{equation}
    \label{s_E^+ equation} 
    \dot{\mathbf{s}}_E^- (\tau) = \gamma_e A_1 (\tau) |A_2 (\tau)| \dfrac{1}{d_E(\tau)}.
    \end{equation}
    From \eqref{A1 estimate} and \eqref{A2 estimate 2}, it follows that
    $$
    \gamma_e A_1(\tau) |A_2(\tau)| \geq \left( \dfrac{\gamma_e}{\kappa_E (0)} - \dfrac{\gamma_e C_L}{\kappa_0^3 t_0}t \right) \left( 1 - \dfrac{C_L + \| \kappa(\cdot)\|_\infty^2}{2 \kappa_0^2 t_0^2} t^2\right) $$
    and
    $$
    \gamma_e A_1(\tau) |A_2(\tau)| \leq
        \left( \dfrac{\gamma_e}{\kappa_E (0)} + \dfrac{\gamma_e C_L}{\kappa_0^3 t_0}t \right) \left( 1+ \dfrac{C_L + \| \kappa(\cdot)\|_\infty^2}{2 \kappa_0^2 t_0^2} t^2\right),
    $$
    which in turn implies
    \begin{equation}
        \label{A1A2 estimate}
        \dfrac{\gamma_e}{\kappa_E (0)} - \dfrac{\gamma_e C_L}{\kappa_0^3 t_0}t - C_1 t^2 
        \leq \gamma_e A_1(\tau) |A_2(\tau)| \leq 
        \dfrac{\gamma_e}{\kappa_E (0)} + \dfrac{\gamma_e C_L}{\kappa_0^3 t_0}t + C_1 t^2,
    \end{equation}
    for some constant $C_1>0$.

    We can now use the upper and lower estimates of $d_E(\tau)$ from Lemma \ref{lem: estimate dist} to obtain
    $$
    \dfrac{1}{\sqrt{d_E(0)^2 - C_t' \tau}} \leq \dfrac{1}{d_E(\tau)}
    \leq \dfrac{1}{\sqrt{d_E(0)^2 - C_t \tau}}
    $$
    where
    $$
    C_t = \left( \dfrac{2\gamma_e}{\kappa_E (0)} + C_e t \right)
    \quad \text{and} \quad
    C_t' = \left( \dfrac{2\gamma_e}{\kappa_E (0)} - C_e t \right).
    $$
    Note that we can always take $\overline{t}_0\in (0,t_0]$ small enough, so that $2C_t \tau \leq d_E(0)^2$ for all $t\in [0,\overline{t}_0]$ and $\tau \in [0,t]$.
    The choice of $\overline{t}_0$ depends on $d_E(0)$, however, by \eqref{t_0 small cond}, it can be made depending on $t_0$ instead. Namely, it is sufficient to take $\overline{t}_0>0$ such that $2C_t \tau \leq (2\gamma_e t_0)^2$ for all $t\in [0,\overline{t}_0]$ and $\tau \in [0,t]$.
    
    Since $\sqrt{d_E(0)^2 - C_t\tau}$ is uniformly positive for $\tau\in [0,t]$, it is not difficult to verify that there exists a constant $C_2>0$ such that
    \begin{equation}
    \label{1/dE estimate}
    \dfrac{1}{d_E(0)} + \dfrac{C_t'}{2 d_E(0)^3} \tau - C_2 \tau^2 
    \leq \dfrac{1}{d_E(\tau)} \leq 
    \dfrac{1}{d_E(0)} + \dfrac{C_t}{2 d_E(0)^3} \tau + C_2 \tau^2.
    \end{equation}

    \underline{\textit{Step 1.1: Lower estimate.}}
    In view of the lower estimates in \eqref{A1A2 estimate} and \eqref{1/dE estimate}, we can use the equation \eqref{s_E^+ equation} to obtain
    $$
    \dot{\mathbf{s}}_E^- (\tau) \geq
    \left( \dfrac{\gamma_e}{\kappa_E (0)} - \dfrac{\gamma_e C_L}{\kappa_0^3 t_0}t - C_1 t^2  \right) 
    \left( \dfrac{1}{d_E(0)} + \dfrac{C_t'}{2 d_E(0)^3} \tau - C_2 \tau^2 \right) \qquad \forall \tau \in (0,t).
    $$
    Hence, for any $t\in [0,\overline{t}_0]$, we have
    \begin{eqnarray*}
    \max_{v\in \overline{B(0,1)}} \hus (t,v)  &\geq & \hls (0) + \left( \dfrac{\gamma_e}{\kappa_E (0)} - \dfrac{\gamma_e C_L}{\kappa_0^3 t_0}t - C_1 t^2  \right) 
    \left( \dfrac{t}{d_E(0)} + \dfrac{C_t'}{4 d_E(0)^3} t^2 - \dfrac{C_2}{3} t^3 \right) \\
    &\geq & \hls (0) + \dfrac{\gamma_e}{\kappa_E (0) d_E(0)} t 
    + \left( \dfrac{\gamma_e^2}{2 \kappa_E (0)^2 d_E(0)^3} - \dfrac{\gamma_e C_L}{\kappa_0^3 d_E(0) t_0}  \right) t^2 - C_e t^3,
    \end{eqnarray*}
    for some $C_e>0$ depending on $\overline{t}_0,t_0, \gamma_e,C_L$ and $\kappa_0$. Here we used the definition of $C_t'$, and put all the terms of order higher or equal than 3 in the constant $C_e$.

    \underline{\textit{Step 1.2: Upper estimate.}}
    Similarly, we can now use the upper estimates in \eqref{A1A2 estimate} and \eqref{1/dE estimate}, we can use the equation \eqref{s_E^+ equation} to obtain, for all $t\in [0,\overline{t}_0]$,
    \begin{eqnarray*}
    \max_{v\in \overline{B(0,1)}} \hus (t,v)  &\leq & \hls (0) +   \left( \dfrac{\gamma_e}{\kappa_E (0)} + \dfrac{\gamma_e C_L}{\kappa_0^3 t_0}t + C_1 t^2  \right)
    \left( \dfrac{t}{d_E(0)} + \dfrac{C_t}{4 d_E(0)^3} t^2 + \dfrac{C_2}{3} t^3 \right) \\
    &\leq & \hls (0) + \dfrac{\gamma_e}{\kappa_E (0) d_E(0)} t
    + \left( \dfrac{\gamma_e^2}{2 \kappa_E (0)^2 d_E(0)^3} + \dfrac{\gamma_e C_L}{\kappa_0^3 d_E(0) t_0}  \right) t^2 + C_e t^3.
    \end{eqnarray*}

    \underline{\textit{Step 2: Estimates for $\hus(t)$.}}
    Using the analogous notation as in \eqref{A1 A2 A3}, we can write
    $$
    \dot{\mathbf{s}}_P^+ (\tau) = \gamma_p A_1 (\tau) |A_2 (\tau)| \dfrac{1}{d_P(\tau)},
    $$
    and by the same argument used to obtain \eqref{A1A2 estimate}, one can obtain
    $$
        \dfrac{\gamma_p}{\kappa_P (0)} - \dfrac{\gamma_p C_L}{\kappa_0^3 t_0}t - C_1 t^2 
        \leq \gamma_p A_1(\tau) |A_2(\tau)| \leq 
        \dfrac{\gamma_p}{\kappa_P (0)} + \dfrac{\gamma_p C_L}{\kappa_0^3 t_0}t + C_1 t^2,
    $$
    for some $C_1>0$.

    We can now use the upper and lower estimates of $d_P(\tau)$ in Lemma \ref{lem: estimate dist} to obtain
    $$
    \dfrac{1}{\sqrt{d_P(0)^2 + C_t' \tau}} \leq \dfrac{1}{d_P(\tau)}
    \leq \dfrac{1}{\sqrt{d_P(0)^2 + C_t \tau}}
    $$
    where
    $$
    C_t = \left( \dfrac{2\gamma_p}{\kappa_P (0)} - C_e t \right)
    \quad \text{and} \quad
    C_t' = \left( \dfrac{2\gamma_p}{\kappa_P (0)} + C_p t \right).
    $$

    Again, it is not difficult to verify that there exists a constant $C_2>0$ such that
    $$
    \dfrac{1}{d_P(0)} - \dfrac{C_t'}{2 d_P(0)^3} \tau - C_2 \tau^2 
    \leq \dfrac{1}{d_P(\tau)} \leq 
    \dfrac{1}{d_P(0)} - \dfrac{C_t}{2 d_P(0)^3} \tau + C_2 \tau^2.
    $$
    The upper and lower estimates for $\hus (t)$ follow from a similar computation as for $\hls(t)$. 
\end{proof}

\bibliographystyle{abbrv}
\bibliography{mybibfile}

\end{document}